\documentclass[11pt, oneside]{article}

\usepackage{url}
\usepackage[title]{appendix}
\usepackage{amsmath,amstext,amssymb,amsfonts,amsthm}
\usepackage{xcolor}
\usepackage{graphicx}
\usepackage{tikz}
\usetikzlibrary{calc,angles,quotes,arrows.meta,decorations.pathmorphing,fadings}
\tikzset{resistor/.style={decorate,decoration={zigzag,segment length=4pt,amplitude=3pt,pre length=3pt,post length=3pt}}}
\usepackage[american]{circuitikz}
\ctikzset{bipoles/length=0.55cm, resistors/zigs=3, resistors/thickness=0.8}
\tikzfading[name=fade right, left color=transparent!0, right color=transparent!100]
\tikzfading[name=fade left,  right color=transparent!0, left color=transparent!100]
\tikzfading[name=fade down, bottom color=transparent!0, top color=transparent!100]
\tikzfading[name=fade up,   top color=transparent!0, bottom color=transparent!100]
\usepackage{booktabs}
\usepackage{multirow}
\usepackage{siunitx}
\usepackage[labelfont=bf]{caption}
\usepackage{subcaption}
\usepackage{comment}

\newtheorem{prop}{Proposition}[section]

\newtheorem{rem}{Remark}[section]
\newtheorem{Def}{Definition}[section]
\numberwithin{equation}{section}
\numberwithin{figure}{section}
\numberwithin{table}{section}
\allowdisplaybreaks[4]
\newcommand{\Om}{\Omega}
\newcommand{\Gm}{\Gamma}
\newcommand{\Km}{\mathbf{K}_m}
\newcommand{\bn}{\mathbf{n}}
\newcommand{\bu}{\mathbf{u}}

\newcommand{\grad}{\nabla}
\newcommand{\jump}[1]{[\![#1]\!]}
\newcommand{\Th}{\mathcal{T}_h}
\newcommand{\ThG}{\mathcal{T}_h^{\Gm}}
\newcommand{\transp}{{\mathsf T}}
\newcommand{\egamma}{\mathbf{e}_\gamma}
\newcommand{\uT}{\mathbf{u}_T}

\title{An unfitted finite element discrete fracture model for low-permeability barriers via local stiffness matrix modification}
\author{Ziyao Xu\footnotemark[1]}
\date{}

\begin{document}

\maketitle
\renewcommand{\thefootnote}{\fnsymbol{footnote}}
\footnotetext[1]{Department of Mathematics and Statistics,
Binghamton University, Binghamton, NY 13902, USA. E-mail: zxu24@binghamton.edu}

\begin{center}
\small
\begin{minipage}{0.9\textwidth}
\textbf{Abstract.}
Finite element methods are among the most widely used discretizations for flow in porous media, and their discrete fracture models (DFMs) for highly conductive fractures are well-established. 
Low-permeability barriers, by contrast, have long resisted this framework because they induce pressure discontinuities that continuous elements cannot represent directly.
In this paper, we propose a simple extension of the linear finite element method for modeling low-permeability barriers through a closed-form modification of the local stiffness matrix of each barrier-cut element.
The method retains exactly the same $H^1$-conforming $P^1$-finite element space, preserves the original sparsity pattern and symmetric positive definiteness, requires no mesh fitting, and coincides with the standard finite element method away from barriers.
Moreover, an inexpensive, purely local post-processing step recovers the discontinuous pressure field with a sharp jump at the barrier interface, thereby removing the one-element-wide smearing present in the continuous solution.
Convergence studies with manufactured solutions and two- and three-dimensional benchmark problems from the literature confirm the effectiveness of the method.

\medskip
\textbf{Keywords.} finite element method, discrete fracture model, low-permeability barriers, unfitted meshes, local stiffness matrix modification, local post-processing recovery

\end{minipage}
\end{center}
\setlength{\parindent}{2em}

\pagenumbering{arabic}

\section{Introduction}\label{sec:intro}
Single-phase flow in porous media is governed by Darcy's law. An accurate prediction of such flows is fundamental to many applications, including groundwater management, oil and gas recovery, and geothermal energy development.
Natural formations, however, are rarely homogeneous. Geological processes such as faulting, as well as human activities such as fracking, can generate extensive fracture systems in subsurface rocks. 
Open fractures and fractures filled with proppants are often much more permeable than the surrounding matrix and therefore act as highly conductive \emph{fractures}. In contrast, fractures sealed by mineral cement can be much less permeable than the matrix and form \emph{barriers} that block flow. Both types of structures are common in subsurface formations and can fundamentally impact the direction and rate of fluid flow. Numerical simulations must therefore represent both effects accurately.

Because fracture apertures are often several orders of magnitude smaller than the field scale, a full-dimensional representation of fractures usually leads to excessive computational cost and severely ill-conditioned linear systems. A common alternative is the method of discrete fracture models (DFMs), which adopt mixed-dimensional approaches to reduce each fracture to a codimension-one interface embedded in the surrounding full-dimensional porous matrix \cite{martin2005modeling}.

For highly conductive fractures, the finite element DFM \cite{noorishad1982upstream, baca1984modelling, kim2000finite, karimi2003numerical, zhang2013accurate} is one of the earliest and primary classes of discretizations.
Its main idea is simple. Assuming that pressure is continuous across a fracture, the reduced fracture model contributes only a tangential diffusion term supported on the interface in the weak formulation. 
A lower-dimensional stiffness term can therefore be added directly to the bulk Galerkin system on a fitted mesh.\footnote{Terminology varies in the literature. Here, a mesh is called \emph{fitted} if its edges or faces align with the fracture, and \emph{unfitted} otherwise. We avoid the alternative term \emph{nonconforming} to prevent confusion with nonconforming finite element spaces.}
The fracture pressure is represented using the nodal unknowns already present in the mesh. As a result, no additional unknowns are introduced, no interface conditions need to be imposed explicitly, and the assembled system remains symmetric positive definite.
Since the method only requires an additional elementwise assembly loop, it can be incorporated into a general finite element framework with little modification. These methods are mainly used on fitted meshes, but an extension to unfitted meshes is also feasible \cite{xu2020hybrid}. 

Barriers are fundamentally different. Their effect is associated with a pressure jump across the interface, which cannot be represented by a continuous function space. Standard finite element methods therefore do not recognize the barrier unless the formulation or approximation space is modified. Benchmark studies show that this is not a minor issue.
In the three-dimensional comparison by Berre et al.\ \cite{berre2021verification}, the barrier test clearly separated the performance of different methods. Methods that assumed continuous hydraulic head across the fracture, including the standard finite element DFM built in COMSOL and the Lagrange multiplier approaches in \cite{koppel2019lagrange, schadle20193d}, failed to capture the pressure jump.
The only continuous finite element-type method that agreed with the reference solution resolved the barrier as an equi-dimensional region with finite thickness \cite{favino2020fully}. Thus, within the finite element framework, the treatment of barriers has not yet become a standard capability.

Outside the finite element framework, barriers have been studied more extensively, but most methods rely on fitted meshes. When element faces are fitted to the barriers, mixed-dimensional methods can impose the interface conditions directly \cite{martin2005modeling}; mortar methods can further extend this treatment to nonmatching meshes \cite{frih2012modeling, boon2018robust}. 
Cell-centered finite volume methods incorporate the barrier resistance into the transmissibility \cite{karimi2004efficient, angot2009asymptotic}, while vertex-centered box methods recover the pressure jump by breaking the trial space across barrier faces \cite{glaser2022comparison, xu2024box}. More recently, high-order finite volume methods \cite{liu2026high} and interior penalty discontinuous Galerkin methods \cite{liu2026interior} have also been developed within the same fitted setting.

When barriers cut through the background mesh, the range of available methods becomes much narrower, and most existing approaches introduce additional local structures to represent the pressure jump. 
XFEM-type methods \cite{d2012mixed,fumagalli2013numerical,schwenck2015dimensionally,zhao2018modeling,cervera2022comparative} enrich the approximation space on cut elements to represent interface-induced discontinuities and to impose the interface conditions weakly, whereas immersed finite element methods incorporate the interface conditions through modified local basis functions \cite{zhao2024discrete,zhao2026petrov}. 
For general elliptic interface problems, interface-penalty methods instead duplicate the approximation space on cut elements and impose Robin jump conditions using Nitsche-type terms \cite{li2022high}.
These methods avoid globally fitted meshes, but rely on geometry-dependent enrichment or coupling constructions. 
The embedded discrete fracture model constitutes another important family of unfitted methods. While the original EDFM primarily targets conductive fractures \cite{li2008efficient,moinfar2014development}, pEDFM extends it to barriers by modifying transmissibility connections according to the fracture projections \cite{tene2017projection}.
Subsequent variants improve the representation of inclined barriers and near-fracture flow through continuous projections, local fine-scale problems, or enriched pressure approximations \cite{rashid2024continuous,losapio2023local,jiao2024enriched}. 
The RDFM instead incorporates barriers into hybrid-dimensional resistance tensors and discretizes the resulting model directly on unfitted meshes \cite{xu2023hybrid, fu2023hybridizable}.

The aforementioned methods are effective within their respective settings, but each departs in some way from the standard $H^1$-conforming finite element method. This leads to the question studied in this paper: can low-permeability barriers be captured by only a \emph{minimal} modification of the standard finite element method, without introducing additional degrees of freedom, enriching or modifying the approximation space, or requiring a barrier-fitted mesh?
We give an affirmative answer in this paper.

The proposed method incorporates barriers through a simple closed-form modification of the local stiffness matrices. Its construction is based on a variational principle. The barrier effect enters the energy functional through a Robin-type pressure-jump term supported on the barrier interface. For each element $T$ cut by a barrier, an auxiliary variable is introduced to represent the pressure jump that is missing from the original continuous approximation space. This variable is then eliminated by a local stationarity condition. 
In single-barrier cases, the resulting method is the following closed-form rank-one correction of the local stiffness matrix:
\[
K_T\ \longmapsto\ K_T-\frac{(K_T\mathbf e_\gamma)(K_T\mathbf e_\gamma)^{\mathrm T}}{(K_T)_{\gamma\gamma}+\ell_T/R},
\]
where $K_T$ is the standard local stiffness matrix on $T$, and the other quantities will be defined precisely later. In implementation, only a few lines of code need to be added to the standard stiffness-matrix assembly loop. 
The computed pressure is continuous and spreads pressure drop over one element width, which converges to the exact jump under mesh refinement.
After a local post-processing step, the interface jump can be recovered, and we obtain a piecewise discontinuous reconstructed pressure in which each pressure drop is placed directly on the barrier.
The method is designed to be integrated directly into the classical finite element discrete fracture model. Highly conductive fractures are incorporated by adding tangential stiffness contributions to the global stiffness matrix, while the barrier treatment proposed here modifies the local stiffness matrices. An existing FEM-DFM code can be extended to barriers by modifying only the local stiffness matrices of barrier-cut elements, while leaving its data structures, linear solvers, and treatment of highly conductive fractures unchanged. 

The remainder of the paper is organized as follows. Section~\ref{sec:model} states the interface model and its
broken energy functional. Section~\ref{sec:method} derives the method by a variational argument and
local condensation, establishes its structural properties, and treats barrier networks.
Section~\ref{sec:recovery} develops the post-processing recovery of the pressure jumps.
Section~\ref{sec:num} reports numerical experiments, including manufactured convergence tests and published benchmark problems. Section~\ref{sec:conclusion} concludes. Appendix~\ref{app:network} presents a resistor-network interpretation of the method,
and Appendix~\ref{app:threed} details the three-dimensional construction.
\section{The interface model and its broken energy functional}\label{sec:model}

\subsection{Model problem}\label{sec:modelproblem}

Let $\Om\subset\mathbb{R}^2$ be the computational domain. 
For clarity, we state the interface model for a single barrier that separates the domain into two sides. This is only a notational simplification. Immersed barriers, terminating barriers, and barrier networks are treated by the same interface condition. 
Let $\Gm$ be a (piecewise) straight barrier interface, separating $\Om$ into $\Om^-$ and $\Om^+$. 
For a function $w$ on $\Om^-\cup\Om^+$, define its jump on $\Gamma$ as $\jump{w}:=w^--w^+$, where $w^\pm$ are the traces of $w$ on $\Om^{\pm}$.
Let the unit normal $\bn_{\Gamma}$ be oriented from $\Om^-$ to $\Om^+$. 
See Figure \ref{fig:model-single-barrier} for an illustration. 
The matrix permeability $\Km(x)$ is symmetric positive definite and may be heterogeneous and anisotropic. 
The barrier resistance is
\[
R:=\frac{a}{k_b}>0,
\]
where $a\ll1$ is the aperture and $k_b\ll1$ is the permeability of the barrier.
 
We consider the interface problem of
Martin--Jaffr\'e-Roberts type \cite{martin2005modeling} in the low-permeability limit,
\begin{subequations}\label{eq:reduced-model}
\begin{align}
    -\grad\cdot(\Km\grad p) &= f, && \text{in }\Om^-\cup\Om^+, \label{eq:bulk}\\
    \jump{u_n} &=0, && \text{on }\Gm, \label{eq:fluxcont}\\
    \jump{p} &= R\, u_n, && \text{on }\Gm, \label{eq:robin}
\end{align}
\end{subequations}
where
\[
    \bu=-\Km\grad p,\qquad
    u_n=\bu\cdot\bn_{\Gamma}=-\Km\grad p\cdot\bn_{\Gamma},
\]
completed by $p=p_D$ on $\Gm_D\subseteq\partial\Om$ and $\bu\cdot\bn=q_N$ on
$\Gm_N=\partial\Om\setminus\Gm_D$. 
The normal flux $u_n$ is continuous across the barrier, and the
pressure $p$ drops by $R$ times that flux. 

\begin{figure}[htbp!]
\centering
\begin{tikzpicture}[scale=1.0,>=Latex]

\coordinate (A) at (0,0);
\coordinate (B) at (6,0);
\coordinate (C) at (6,4);
\coordinate (D) at (0,4);

\coordinate (P1) at (1.0,0);
\coordinate (P2) at (5.0,4);


\draw[thick] (A)--(B)--(C)--(D)--cycle;

\draw[blue,very thick] (P1)--(P2);

\coordinate (M) at ($(P1)!0.55!(P2)$);
\draw[->,thick] (M) -- ++(0.8,-0.8) node[right] {$\mathbf n_{\Gamma}$};

\node at (1.7,2.8) {$\Omega^{-}$};
\node at (4.8,1.1) {$\Omega^{+}$};
\node[blue,above left] at ($(P1)!0.65!(P2)$) {$\Gamma$};

\node at (2.2,1.5) {$p^{-}$};
\node at (2.9,1.5) {$p^{+}$};


\node[left]  at ($(A)!0.5!(D)$) {$\partial\Omega$};

\end{tikzpicture}
\caption{Model geometry for a single barrier interface.  The barrier $\Gamma$ separates the domain into two sides $\Omega^-$ and $\Omega^+$, with unit normal $\mathbf n_{\Gamma}$ oriented from $\Omega^-$ to $\Omega^+$.  The pressure may jump across $\Gamma$.}
\label{fig:model-single-barrier}
\end{figure}

\subsection{Weak form and broken energy functional}\label{sec:energy}

The natural function space for the interface problem \eqref{eq:reduced-model} is broken across the barrier,
\[
    V=\bigl\{v\in L^2(\Om):\ v|_{\Om^\pm}\in H^1(\Om^\pm)\bigr\},
\]
with $V_D=\{v\in V:\ v=p_D \text{ on }\Gm_D\}$ and the test space $V_0=\{v\in V:\ v=0\text{ on
}\Gm_D\}$. To derive the weak form, multiply \eqref{eq:bulk} by $v\in V_0$ and integrate by
parts on each subdomain separately:
\[
\int_{\Om^\pm}\Km\grad p\cdot\grad v\,d\mathbf{x}
=\int_{\Om^\pm}f v\,d\mathbf{x}
+\int_{\partial\Om^\pm\setminus\Gamma_{D}}(\Km\grad p\cdot\bn_{\rm out})\,v\,ds .
\]
On $\Gm$, the outward normal of $\Om^-$ is $\bn_{\Gamma}$ and that of $\Om^+$ is $-\bn_{\Gamma}$. 
Adding the two
identities, the interior boundary terms combine into a term supported on the barrier,
\[
\sum_{\pm}\int_{\Om^\pm}\Km\grad p\cdot\grad v\,d\mathbf{x}
=\int_{\Om}f v\,d\mathbf{x}-\int_{\Gm_N}q_N v\,ds
-\int_{\Gm}u_n\,\jump{v}\,ds ,
\]
where the flux continuity \eqref{eq:fluxcont} was used to write a single $u_n$. Substituting the
Robin condition \eqref{eq:robin}, $u_n=\jump{p}/R$, and moving the interface term to the left
gives the symmetric weak form \cite{angot2009asymptotic}: find $p\in V_D$ such that
\begin{equation}\label{eq:weakform}
a(p,v):=\sum_{\pm}\int_{\Om^\pm}\Km\grad p\cdot\grad v\,d\mathbf{x}
+\frac1R\int_{\Gm}\jump{p}\,\jump{v}\,ds
=\int_{\Om}fv\,d\mathbf{x}-\int_{\Gm_N}q_Nv\,ds=:\ell(v)
\qquad\forall v\in V_0 .
\end{equation}
The bilinear form $a$ is symmetric and, assuming the uniform ellipticity of $\Km$ and a Dirichlet boundary portion of positive measure, is coercive on $V_0$. 
The problem \eqref{eq:weakform} is therefore equivalent to the minimization of the \emph{broken energy functional}
\begin{equation}\label{eq:continuous-energy}
    J(q)
    =
    \frac12\sum_{\pm}\int_{\Om^\pm}\Km\grad q\cdot\grad q\,d\mathbf{x}
    +
    \frac1{2R}\int_{\Gm}\jump{q}^2\,ds
    -
    \ell(q),
    \qquad
    p=\mathop{\arg\min}_{q\in V_D} J(q).
\end{equation}
The first term is the matrix (bulk) energy on the two sides of the barrier. The second term is
the barrier energy: an interface conductance $1/R$ penalizes the square of the pressure jump. 
This functional is the starting point of the method.

\begin{rem}[Why a conforming space cannot see the barrier]\label{rem:invisible}
If the energy \eqref{eq:continuous-energy} is minimized over a conforming finite element space
$V_h\subset H^1(\Om)$, the term $\frac1{2R}\int_{\Gm}\jump{q}^2\,ds$ vanishes, because every conforming function has
$\jump{v_h}\equiv0$ on $\Gm$; the minimization degenerates to the barrier-free problem and the
barrier is invisible. A conforming method can therefore see the barrier only if the missing jump
is \emph{represented before the final conforming system is assembled}. 
This is precisely what the method of Section~\ref{sec:method} does. It introduces local auxiliary pressure-jump variables and then eliminates them by stationarity conditions.
\end{rem}

\section{The method of local stiffness matrix modification}\label{sec:method}

Let $\Th$ be a triangulation of $\Om$, \emph{not} required to be fitted to $\Gm$, and
let $V_h\subset H^1(\Om)$ be the finite element space of continuous piecewise linear functions on $\Th$. For a
cell $T\in\Th$, the local stiffness matrix contributed by the bulk matrix is
\begin{equation}\label{eq:elstiff}
    (K_T)_{\mu\nu}=\int_T \Km\grad\phi_\mu\cdot\grad\phi_\nu\,d\mathbf{x},
    \qquad \mu,\nu=1,2,3,
\end{equation}
with $\phi_\mu$ the nodal hat functions for local vertices $\mu=1,2,3$ on $T$. 

\subsection{Derivation by local auxiliary variables and elimination}\label{sec:derivation}

We call a cell $T$ \emph{cut} if $\Gm$ crosses its interior, and write $\ThG\subset\Th$ for the
set of cut cells. For a straight (or locally straight) barrier, $\Gm\cap T$ is a single chord
entering and leaving through two different edges, so exactly one vertex, which we call the lone vertex $\gamma=\gamma(T)$, is separated by the barrier from the other two vertices $\alpha$ and $\beta$. We denote the chord length by $\ell_T=|\Gm\cap T|$; see Figure~\ref{fig:cutcell}. 

\begin{figure}[htbp!]\centering
\begin{subfigure}[b]{0.42\textwidth}\centering
\begin{tikzpicture}[scale=0.85,>=Latex]
  \coordinate (a) at (0,0); \coordinate (b) at (4,0); \coordinate (d) at (2.6,3.1);
  \coordinate (x1) at ($(a)!0.6!(d)$); \coordinate (x2) at ($(b)!0.55!(d)$);
  \draw[thick] (a)--(b)--(d)--cycle;
  \draw[blue,thick,dashed] ($(x1)!-0.3!(x2)$)--($(x2)!-0.3!(x1)$);
  \draw[blue,very thick] (x1)--(x2);
  \fill[blue] (x1) circle(1.6pt) (x2) circle(1.6pt);
  \foreach \pt in {a,b,d}\fill (\pt) circle(2pt);
  \node[below left] at (a){$\alpha$}; \node[below right] at (b){$\beta$};
  \node[above] at (d){$\gamma$ (lone vertex)};
  \node[blue,above] at ($(x1)!0.45!(x2)$){$\ell_T$};
  \node[blue] at ($(x2)!-0.32!(x1)+(0.32,0.12)$){$\Gm$};
\end{tikzpicture}
\caption{a cut cell $T=(\alpha,\beta,\gamma)$}
\label{fig:cutcell-a}
\end{subfigure}\hfill
\begin{subfigure}[b]{0.5\textwidth}\centering
\includegraphics[width=0.5\textwidth]{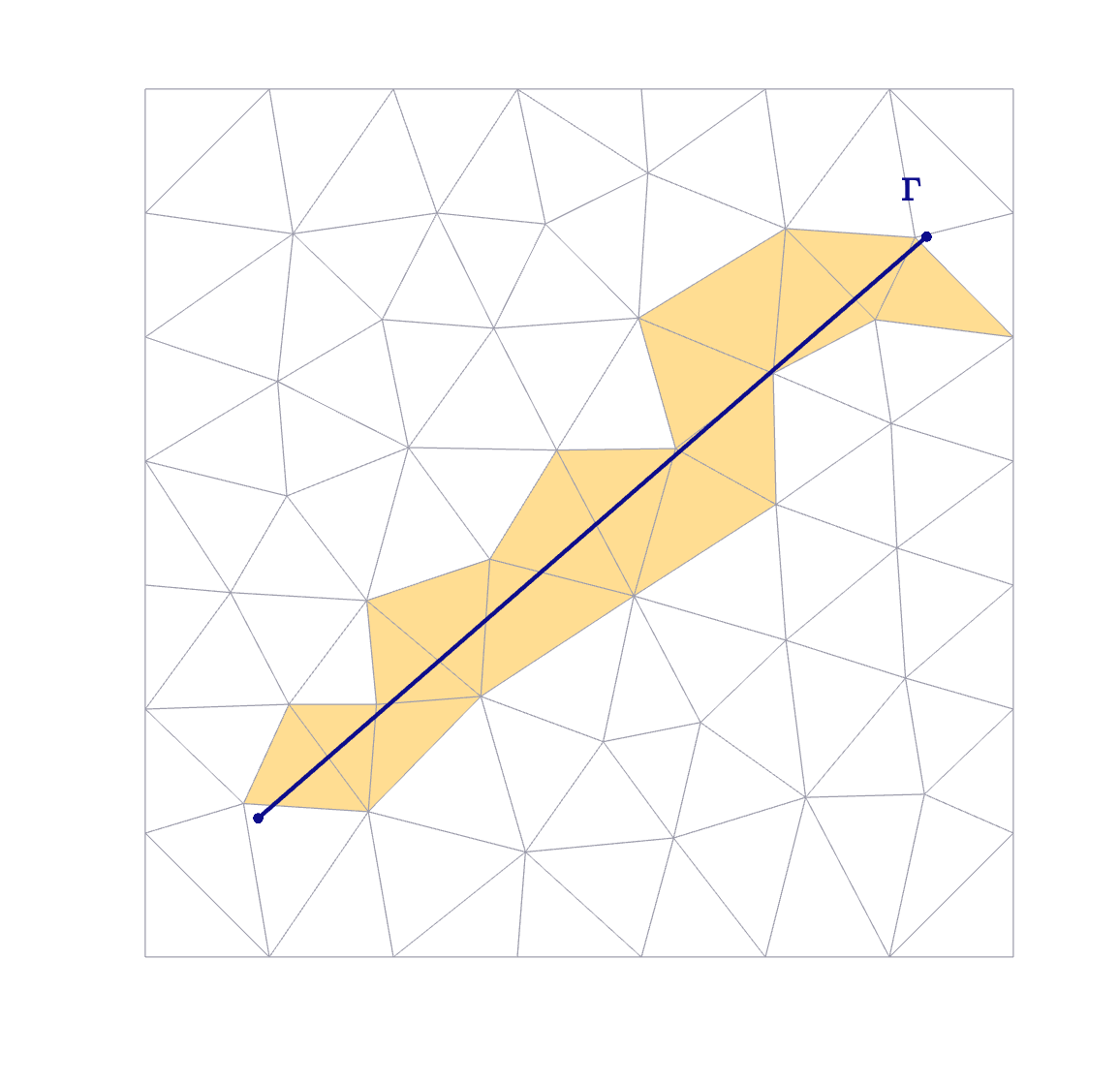}
\caption{a barrier on an unstructured, unfitted mesh}
\label{fig:cutcell-b}
\end{subfigure}
\caption{Geometry of the method. (a) In a cut cell the barrier chord (blue, length $\ell_T$)
separates the lone vertex $\gamma$ from $\alpha,\beta$. (b) An oblique barrier $\Gm$ crossing an unstructured
Delaunay mesh it is not fitted to. 
The cut cells are shaded, and only their element stiffness is modified.
The cells with only a free tip are left unchanged.}
\label{fig:cutcell}
\end{figure}

By Remark~\ref{rem:invisible}, the missing ingredient of the $H^1$-conforming finite element space is the jump, and we represent it locally. 
On each cut cell $T\in\ThG$, introduce one scalar auxiliary variable
$t_T$, interpreted as the pressure drop across the chord $\Gm\cap T$. The matrix (bulk) part of the energy \eqref{eq:continuous-energy} should act on the pressure after this drop has been removed (as if the barrier never existed). 
With the convention that the correction is applied at the lone vertex, the matrix (bulk) field represented in the finite element space is the jump-corrected nodal vector
\[
    \uT^{\rm bulk}=\uT-t_T\egamma,
    \qquad
    \uT=(u_1,u_2,u_3)^{\transp},
\]
where $\egamma$ is the $\gamma$-th coordinate vector in $\mathbb R^3$.

The local discrete energy functional on a cut cell $T\in\ThG$ is therefore
\begin{equation}\label{eq:ET}
    \mathcal E_T(\uT,t_T)
    =
    \frac12(\uT-t_T\egamma)^{\transp}K_T(\uT-t_T\egamma)
    +
    \frac{\ell_T}{2R}\,t_T^2 .
\end{equation}
The first term is the discrete counterpart of the bulk energy $\frac12\int_{T\setminus\Gamma}\Km\grad q\cdot\grad q\,d\mathbf{x}$. The second term
is the chord quadrature of the Robin jump energy: it approximates
$\frac1{2R}\int_{\Gm\cap T}\jump{q}^2ds$ with one sample of the jump per cell. 
The discrete energy of \eqref{eq:continuous-energy} is then
\begin{equation}\label{eq:Jh}
J_h\bigl(u_h,(t_T)_{T\in\ThG}\bigr)
=\sum_{T\notin\ThG}\frac12\,\uT^{\transp}K_T\uT
+\sum_{T\in\ThG}\mathcal E_T(\uT,t_T)
-\ell(u_h),
\qquad u_h\in V_{h,D}.
\end{equation}

The jump variable $t_T$ is not meant to be retained in the global system. We can eliminate it by minimizing $\mathcal E_T(\mathbf{u}_{T}, t_T)$ with
respect to $t_T$ at fixed nodal vector $\uT$,
\begin{equation}\label{eq:stationarity}
    0=
    \frac{\partial \mathcal E_T}{\partial t_T}
    =
    -\egamma^{\transp}K_T(\uT-t_T\egamma)
    +\frac{\ell_T}{R}\,t_T ,
\end{equation}
The solution of this stationarity equation is explicit:
\begin{equation}\label{eq:tstar}
    t_T^\ast=
    \frac{\egamma^{\transp}K_T\uT}{(K_T)_{\gamma\gamma}+\ell_T/R}.
\end{equation}
The elimination is legitimate because it is local: each $t_T$ appears in exactly one term of \eqref{eq:Jh}, so minimizing $J_h$ jointly over $\bigl(u_h,(t_T)_{T\in\ThG}\bigr)$ may be performed hierarchically, and the inner minimization over each $t_T$ is the cellwise problem \eqref{eq:stationarity}. 
Therefore, the minimizer of the local functional \eqref{eq:ET} is also the minimizer of the global functional \eqref{eq:Jh}. 
Substituting \eqref{eq:tstar} back into \eqref{eq:ET} condenses the discrete cell energy functional to a quadratic form in the retained unknowns,
\begin{equation}\label{eq:locally_condensed_energy}
\widetilde{\mathcal{E}}_T(\uT)=\min_{t_T}\mathcal{E}_T(\uT,t_T)
    =\frac12\,\uT^{\transp}\widetilde K_T\,\uT,
\end{equation}
with the \emph{cut-cell barrier stiffness}
\begin{equation}\label{eq:cellupdate}
    \widetilde K_T
    =
    K_T-
    \frac{(K_T\egamma)(K_T\egamma)^{\transp}}
    {(K_T)_{\gamma\gamma}+\ell_T/R},
\end{equation}
which is a rank-one modification of the local stiffness matrix of the matrix (bulk) field. Note that $K_T\egamma$ is simply the
$\gamma$-th column of $K_T$, so \eqref{eq:cellupdate} is assembled from data the
finite element code already has.

\begin{Def}[The method of local stiffness matrix modification]\label{def:method}
The method solves the $H^1$-conforming $P^1$-FEM system in which every cut cell $T\in\ThG$
assembles the modified local stiffness matrix $\widetilde K_T$ of \eqref{eq:cellupdate} in place of $K_T$;
uncut cells and the load vector are untouched. The discrete solution is
$p_h=\arg\min_{u_h\in V_{h,D}} \bigl[\tfrac12\sum_T\uT^{\transp}\widetilde K_T\uT-\ell(u_h)\bigr]$
(with $\widetilde K_T=K_T$ off the barrier).
\end{Def}

The final linear system has exactly the same nodal unknowns and the same sparsity pattern. In
an existing code the entire method is the following insertion in the assembly loop, executed for
the $O(1/h)$ cut cells:
\begin{center}
\texttt{g = K(:,gamma);\quad K = K - g*g'/(K(gamma,gamma) + ellT/R);}
\end{center}

A cell containing only an isolated free tip (the barrier ends in its interior) is left unmodified.
Cells crossed by several barrier pieces, or cells containing a junction, are handled by Section~\ref{sec:junctions}, where the auxiliary variables are local pressure offsets attached to the subregions of the cut cell.

\begin{prop}[Stability]\label{prop:spd}
For any symmetric positive definite $\Km$, any triangulation, and any
$R,\ell_T>0$, the modified local stiffness matrix $\widetilde K_T$ is symmetric and positive semidefinite with kernel the multiples of $\mathbf 1$. Consequently, the assembled global stiffness
matrix of Definition~\ref{def:method} is symmetric positive definite on the free nodes whenever
that of the standard $P^1$-FEM is.
\end{prop}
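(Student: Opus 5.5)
The plan is to read the modified matrix directly off the condensed energy \eqref{eq:locally_condensed_energy}. Symmetry is immediate from \eqref{eq:cellupdate}: $K_T$ is symmetric and $(K_T\egamma)(K_T\egamma)^{\transp}$ is a symmetric outer product. For semidefiniteness I would use the variational characterization
\[
\tfrac12\uT^{\transp}\widetilde K_T\uT=\min_{t}\Bigl[\tfrac12(\uT-t\egamma)^{\transp}K_T(\uT-t\egamma)+\tfrac{\ell_T}{2R}t^2\Bigr].
\]
Both terms are nonnegative, because $K_T$ is positive semidefinite for symmetric positive definite $\Km$. Hence $\widetilde K_T\succeq0$. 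The minimum is attained, since the denominator $(K_T)_{\gamma\gamma}+\ell_T/R$ is strictly positive when $\ell_T,R>0$.

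For the kernel, there are two inclusions to check.
\begin{itemize}
\item \emph{Constants lie in the kernel.} The rows of $K_T$ sum to zero, because $\sum_\mu\phi_\mu\equiv1$ on $T$. So $K_T\mathbf 1=0$ and $\egamma^{\transp}K_T\mathbf 1=0$, which gives $\widetilde K_T\mathbf 1=K_T\mathbf 1-0=0$.
\item \emph{Nothing else lies in the kernel.} Suppose $\uT^{\transp}\widetilde K_T\uT=0$. Then at the minimizer $t^\ast$ both nonnegative terms vanish. From $\ell_T t^{\ast2}/R=0$ we get $t^\ast=0$. The first term then gives $\uT^{\transp}K_T\uT=0$, so $\grad$ of the $P^1$ interpolant of $\uT$ vanishes on $T$. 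This uses that $\Km$ is positive definite on the nondegenerate cell $T$, whose hat gradients span $\mathbb R^2$ with the single relation $\sum_\mu\grad\phi_\mu=0$. Hence $\uT\in\mathrm{span}\{\mathbf 1\}$.
\end{itemize}
Since $\widetilde K_T$ is positive semidefinite, $\uT^{\transp}\widetilde K_T\uT=0$ is equivalent to $\widetilde K_T\uT=0$, so the kernel is exactly the multiples of $\mathbf 1$.

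The global statement follows by comparison. Each cell kernel of $\widetilde K_T$ equals that of $K_T$, and both are positive semidefinite. So for every $T$ and every $\uT$, $\uT^{\transp}\widetilde K_T\uT=0$ if and only if $\uT^{\transp}K_T\uT=0$. Summing over cells, the assembled quadratic form $\sum_T\uT^{\transp}\widetilde K_T\uT$ vanishes for a global nodal vector exactly when the standard form does. Therefore, if the standard matrix restricted to free nodes (with zero Dirichlet values) is positive definite, so is the modified one. Symmetry of the global matrix carries over from the cellwise symmetry.

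The only delicate point is justifying $t^\ast=0$ from vanishing energy. This relies on the strict positivity of $\ell_T/R$, and that positivity is exactly where the hypotheses $R,\ell_T>0$ enter.
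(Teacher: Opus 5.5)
Your proof is correct and follows the paper's argument essentially line for line: symmetry from the formula, semidefiniteness from the condensed-energy identity $\tfrac12\uT^{\transp}\widetilde K_T\uT=\min_t\mathcal E_T(\uT,t)$, and the kernel characterization via $\ell_T/R>0$ forcing $t^\ast=0$ and hence $\uT\in\ker K_T$. Your global step compares the cellwise vanishing of $\uT^{\transp}\widetilde K_T\uT$ and $\uT^{\transp}K_T\uT$, which is a slightly more explicit version of the paper's appeal to a common kernel plus the Dirichlet condition, and it matches the conditional ``whenever that of the standard $P^1$-FEM is'' exactly.
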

\begin{proof}
Symmetry is obvious from \eqref{eq:cellupdate}, and $K_T\mathbf 1=0$ implies $\widetilde K_T\mathbf 1=0$. For every $\uT$, $\tfrac12\uT^{\transp}\widetilde K_T\uT=\min_{t}\mathcal E_T(\uT,t)\ge0$, since $\mathcal E_T\ge0$. 
If $\uT^{\transp}\widetilde K_T\uT=0$, both
terms of $\mathcal E_T(\uT,t_T)$ vanish; $\ell_T/R>0$ forces $t_T=0$, and then $\uT$ lies in the
kernel of $K_T$, i.e., $\uT$ is a muitlple of $\mathbf{1}$. The global statement follows by assembling elementwise
semidefinite matrices with common kernel the multiples of one and applying the Dirichlet boundary
condition.
\end{proof}

An observation is immediate from \eqref{eq:cellupdate}: as $R\to0$ the update vanishes and the plain finite element method is recovered; as $R\to\infty$, $\widetilde K_T\to K_T-(K_T\egamma)(K_T\egamma)^{\transp}/(K_T)_{\gamma\gamma}$, whose $\gamma$-row and column vanish identically.
Therefore the lone vertex is sealed off from the cell, an exact impermeable limit. 

Since the retained pressure is a continuous,
single-valued field, its accuracy against a discontinuous exact solution is limited by
approximation theory: for a jump of size $O(1)$, no continuous piecewise polynomial does
better than $O(h)$ in $L^1(\Om)$ and $O(h^{1/2})$ in $L^2(\Om)$. 
As shown in the numerical tests, the method attains precisely these best-approximation rates.

\subsection{Barrier networks}\label{sec:junctions}
The rank-one modification above treats the elementary case in which a single barrier chord cuts a triangle into two subregions.
When barrier pieces meet inside an element, the interface model of Section~\ref{sec:model} needs no new physics.
The broken energy \eqref{eq:continuous-energy} restricted to $T$ is the sum of the bulk contributions in the subregions and the Robin-type pressure-jump penalties along the barrier pieces.
However, the local element topology becomes richer: a junction cell may be partitioned into more than two subregions.
The key question is: what replaces the single auxiliary pressure-drop variable $t_T$ when one element contains several barrier-induced subregions?

Let the barriers partition $T$ into subregions $T_0,T_1,\ldots,T_m$, and choose $T_0$ as a base subregion.  
The choice of $T_0$ is arbitrary.
We set $t_0=0$ and attach a local pressure offset $t_\rho$ to each non-base subregion $T_\rho$, $\rho=1,\ldots,m$.  These offsets represent the relative pressure levels of the subregions; see Figure \ref{fig:junction}.
The local bulk energy is evaluated after these offsets are removed from the nodal values. 

\begin{figure}[htbp!]\centering
\begin{subfigure}[b]{0.48\textwidth}\centering
\begin{tikzpicture}[scale=0.92,>=Latex]
  \coordinate (A) at (0,0); \coordinate (B) at (4.6,0); \coordinate (C) at (2.3,3.2);
  \coordinate (P) at (2.3,0.69);
  \coordinate (ca) at (1.35,0); \coordinate (cb) at (3.55,1.6);
  \coordinate (da) at (3.25,0); \coordinate (db) at (1.05,1.6);
  \fill[blue!7]    (A)--(ca)--(P)--(db)--cycle;
  \fill[red!7]     (B)--(cb)--(P)--(da)--cycle;
  \fill[green!9]   (C)--(cb)--(P)--(db)--cycle;
  \fill[orange!18] (ca)--(da)--(P)--cycle;
  \draw[thick] (A)--(B)--(C)--cycle;
  \draw[blue,very thick] (ca)--(cb); \draw[blue,very thick] (da)--(db);
  \fill[blue] (P) circle(1.6pt); 
  \foreach \pt in {A,B,C}\fill (\pt) circle(1.7pt);
  \node[below left] at (A){$\gamma$}; \node[below right] at (B){$\alpha$};
  \node[above] at (C){$\beta$};
  \node at (1.02,0.52){$t_2$}; \node at (3.58,0.52){$t_1$};
  \node at (2.3,1.75){$0$}; \node at (2.3,0.24){$t_3$};
\end{tikzpicture}
\caption{X-junction: four subregions}
\end{subfigure}\hfill
\begin{subfigure}[b]{0.48\textwidth}\centering
\begin{tikzpicture}[scale=0.92,>=Latex]
  \coordinate (A) at (0,0); \coordinate (B) at (4.6,0); \coordinate (C) at (2.3,3.2);
  \coordinate (da) at (1.05,1.6); \coordinate (db) at (3.55,1.6);
  \coordinate (Tp) at (2.3,1.6); \coordinate (tb) at (2.3,0);
  \fill[green!9] (C)--(da)--(db)--cycle;
  \fill[blue!7]  (A)--(tb)--(Tp)--(da)--cycle;
  \fill[red!7]   (B)--(db)--(Tp)--(tb)--cycle;
  \draw[thick] (A)--(B)--(C)--cycle;
  \draw[blue,very thick] (da)--(db); \draw[blue,very thick] (tb)--(Tp);
  \fill[blue] (Tp) circle(1.6pt); 
  \foreach \pt in {A,B,C}\fill (\pt) circle(1.7pt);
  \node[below left] at (A){$\gamma$}; \node[below right] at (B){$\alpha$};
  \node[above] at (C){$\beta$};
  \node at (2.3,2.2){$0$};
  \node at (1.25,0.5){$t_2$}; \node at (3.35,0.5){$t_1$};
\end{tikzpicture}
\caption{T-junction: three subregions}
\end{subfigure}
\caption{Junction cells and their local pressure offsets in each subregion.  
Barrier pieces are shown in blue; each shaded subregion carries a local pressure offset $t_\rho$, with the base subregion set to zero.  
(a) An X-junction: two crossing barriers cut the cell into four subregions.  
(b) A T-junction: a terminating arm meets another barrier and splits the cell into three subregions.}
\label{fig:junction}
\end{figure}

Let $\mathbf t_T=(t_1,\ldots,t_m)^{\transp}$.
For each vertex $i\in\{1,2,3\}$, let $T_{\rho_i}$ be the subregion containing it. We define the vertex-region incidence matrix $E_T\in\mathbb R^{3\times m}$ by
\[
(E_T)_{ir}=
\begin{cases}
1, & \rho_i=r,\\
0, & \text{otherwise},
\end{cases}
\qquad r=1,\ldots,m .
\]
Thus, with the convention $t_0=0$, $E_T\mathbf t_T=(t_{\rho_1},t_{\rho_2},t_{\rho_3})^{\transp}$ is the vector whose $i$-th entry is the pressure offset of the subregion containing vertex $i$.
The corrected nodal vector used in the bulk part of the local energy is therefore $\mathbf u_T-E_T\mathbf t_T$.

We next encode the pressure-jump penalty along the barriers.  
We split the barrier pieces inside $T$ at their junction points and call the resulting segments arms.
For each arm $a$, let $T_{\rho_a^-}$ and $T_{\rho_a^+}$ be the two adjacent subregions.
The pressure drop across this arm is the difference of the corresponding offsets, $t_{\rho_a^+}-t_{\rho_a^-}$.
Equivalently, let $\mathbf e_0=\mathbf{0}$ and let $\mathbf e_r$ be the $r$-th coordinate vector in $\mathbb R^m$ for $r=1,\ldots,m$, and define
\[
\mathbf d_a=\mathbf e_{\rho_a^+}-\mathbf e_{\rho_a^-}.
\]
Then the pressure drop across arm $a$ is $\mathbf d_a^{\transp}\mathbf t_T$, and its barrier contribution is
\[
\frac{\ell_a}{2R_a}(\mathbf d_a^{\transp}\mathbf t_T)^2,
\]
where $\ell_a$ is the length of the arm and $R_a$ is its resistance.  Summing over all arms gives
\[
\frac12\mathbf t_T^{\transp}\mathcal P_T\mathbf t_T,
\qquad
\mathcal P_T=\sum_a\frac{\ell_a}{R_a}\mathbf d_a\mathbf d_a^{\transp}.
\]
Thus $\mathcal P_T$ is the reduced weighted graph Laplacian of the local subregion-adjacency graph.
The subregions are the graph nodes, and the barrier arms are the weighted graph edges.  The orientation used in defining $\mathbf d_a$ is arbitrary because the contribution is quadratic.

With these definitions, the local energy on a cut cell $T\in\ThG$ is
\begin{equation}\label{eq:ET-junction}
\mathcal E_T(\mathbf u_T,\mathbf t_T)
=
\frac12(\mathbf u_T-E_T\mathbf t_T)^{\transp}K_T(\mathbf u_T-E_T\mathbf t_T)
+
\frac12\mathbf t_T^{\transp}\mathcal P_T\mathbf t_T .
\end{equation}
The first term is the bulk finite element energy evaluated after removing the local pressure offsets, and the second term is the sum of the pressure-jump penalties along the barrier arms.
Minimizing the quadratic energy $\mathcal{E}_{T}(\mathbf{u}_{T},\mathbf{t}_{T})$ with respect to the local auxiliary offsets $\mathbf t_T$ for fixed $\mathbf{u}_{T}$ gives
\begin{equation}\label{eq:tstar-junction}
\mathbf t_T^\ast
=
\bigl(E_T^{\transp}K_TE_T+\mathcal P_T\bigr)^{-1}E_T^{\transp}K_T\mathbf u_T .
\end{equation}
Since the local region graph is connected and the base region is grounded by $t_0=0$, the reduced graph Laplacian $\mathcal P_T$ is positive definite on the offset variables; hence the small matrix $E_T^{\transp}K_TE_T+\mathcal P_T$ is invertible.  Substituting $\mathbf t_T^\ast$ back into the energy gives the modified local stiffness
\begin{equation}\label{eq:cellupdate-junction}
\widetilde K_T
=
K_T
-
K_TE_T\bigl(E_T^{\transp}K_TE_T+\mathcal P_T\bigr)^{-1}E_T^{\transp}K_T .
\end{equation}
Positive semidefiniteness with kernel the multiples of $\mathbf{1}$ (Proposition~\ref{prop:spd}) holds exactly in the same way.
The formula \eqref{eq:cellupdate-junction} reduces to the rank-one update \eqref{eq:cellupdate} in the case of a single chord.  

Every barrier arm separating two adjacent subregions contributes its pressure-jump penalty to $\mathcal P_T$.  
A truly isolated free tip that does not create a separate subregion in the local partition gives no offset difference to penalize and is therefore ignored.

\begin{rem}[An alternative shortcut]
A simpler alternative is the independent-chord shortcut.  When multiple barriers cut a cell $T$, one may process them one at a time by repeatedly applying the single-chord rank-one update  \eqref{eq:cellupdate}.  Starting from $K_T^{(0)}=K_T$, the $j$-th chord gives
\[
K_T^{(j)}
=
K_T^{(j-1)}
-
\frac{
\bigl(K_T^{(j-1)}\mathbf e_{\gamma_j}\bigr)
\bigl(K_T^{(j-1)}\mathbf e_{\gamma_j}\bigr)^{\transp}
}{
\mathbf e_{\gamma_j}^{\transp}K_T^{(j-1)}\mathbf e_{\gamma_j}
+\ell_j/R_j
},
\]
where $\gamma_j$ is the lone vertex separated by the $j$-th chord, $\ell_j$ is the chord length, and $R_j$ is its resistance.  
This shortcut coincides with the modification for a single chord and gives nearly the same result in cells containing only full chords.  
Its limitation appears at T-junction cells: the terminating arm is an incomplete chord and is therefore ignored by the independent-chord procedure.  This omission weakens the local seal and produces a leakage effect.  
We therefore use the coupled-region condensation \eqref{eq:cellupdate-junction} for genuine junction cells, while the independent-chord shortcut is adequate for cells containing only full chords.
\end{rem}

All these modifications are performed element by element.  
Each element contributes a modified local stiffness matrix, and the global unknowns remain the standard $P^1$ nodal values.

\section{Recovery of broken pressures by local post-processing}
\label{sec:recovery}

The barrier discretization constructed in the previous section deliberately keeps the solution in the standard finite element space.  
Thus the computed pressure $p_h$ is single-valued, although the interface model \eqref{eq:reduced-model} itself allows pressure jumps across $\Gamma$.  
The jump information, however, has not been discarded. It was introduced locally through auxiliary cell variables and eliminated only by stationarity condition.  
The purpose of this section is to reverse this local elimination after the global solve and to reconstruct the corresponding cellwise broken pressure field.

We first consider a cut cell $T$ intersected by a single barrier segment $\Gamma_T=\Gamma\cap T$.  
Let $\gamma$ be the vertex separated from the other two vertices $\alpha,\beta$ by $\Gamma_T$, see Figure~\ref{fig:cutcell-a}, and write $\mathbf u_T=(u_1,u_2,u_3)^{\transp}$.  
Once the global system has been solved and $\mathbf u_T$ is known, the eliminated variable $t_T$ is recovered by the local stationarity condition \eqref{eq:stationarity}, namely
\[
t_T^{\ast}
=
\frac{\mathbf e_\gamma^{\transp}K_T\mathbf u_T}
{\mathbf e_\gamma^{\transp}K_T\mathbf e_\gamma+\ell_T/R}.
\]
This is a purely local back-substitution and requires no additional global solve.

Let $T_{\alpha\beta}$ denote the subregion containing the vertices
$\alpha,\beta$, and let $T_\gamma$ denote the subregion containing the lone
vertex $\gamma$.  The recovered pressure on $T$ is then defined by
\begin{equation}
\widehat p_h
=
\begin{cases}
p_h-t_T^\ast\phi_\gamma, & \text{in } T_{\alpha\beta},\\[1mm]
p_h+t_T^\ast(1-\phi_\gamma), & \text{in } T_\gamma,
\end{cases}
\end{equation}
where $\phi_\gamma$ is the local basis function associated with $\gamma$.
The reconstruction $\widehat p_h$ coincides with the continuous pressure $p_h$ at the three vertices, has the recovered jump $t_{T}^\ast$ across $\Gamma_{T}$, and has a common bulk gradient on both sides of the barrier.

This reconstruction gives a direct interpretation of the local energy \eqref{eq:ET}.
Indeed, we have
\[
\nabla \widehat p_h=
\nabla(p_h-t_T^\ast\phi_\gamma)
\qquad\text{in }T\setminus\Gamma .
\]
Moreover, $[\widehat p_h]_{\Gamma_T}=t_T^\ast$, with the orientation from $T_\gamma$ to $T_{\alpha\beta}$. Therefore
\begin{equation}\label{eq:broken-energy}
\widetilde{\mathcal{E}}_{T}(\mathbf{u}_T)=\mathcal E_T(\mathbf u_T,t_T^\ast)
=
\frac12\int_{T\setminus\Gamma}
\mathbf K_m\nabla \widehat p_h\cdot\nabla \widehat p_h
\,d\mathbf{x}
+
\frac{1}{2R}\int_{\Gamma_T}
[\widehat p_h]^2\,ds .
\end{equation}
Thus the post-processed field is precisely the broken finite element field whose
bulk gradient and barrier jump produce the discrete quadratic energy of \eqref{eq:continuous-energy} on a cut cell.

The same construction applies to the general junction cells considered in
Section~\ref{sec:junctions}.  Suppose that the local barrier network divides $T$
into subregions $T_0,T_1,\ldots,T_m$, see Figure \ref{fig:junction}.  
The eliminated regional offsets are recovered from
the local system
\[
\mathbf t_T^{\ast}
=
\bigl(E_T^{\transp}K_TE_T+\mathcal P_T\bigr)^{-1}
E_T^{\transp}K_T\mathbf u_T.
\]
Let $t_0=0$, and let
$t_r$ denote the recovered offset of $T_r$ for $r=1,\ldots,m$.  
For $x\in T$, let $\rho(x)\in\{0,\ldots,m\}$ be the index of the subregion containing $x$.  Also
let $\rho_i\in\{0,\ldots,m\}$ be the index of the subregion containing the
vertex $i$, $i=1,2,3$.  The recovered pressure is then defined by
\[
\widehat p_h(x)
=
\sum_{i=1}^3 \bigl(u_i-t_{\rho_i}\bigr)\phi_i(x)
+
t_{\rho(x)},\qquad x\in T_{\rho(x)} .
\]
Across a local barrier segment separating
$T_r$ and $T_s$, the recovered jump is $t_s-t_r$, with the
corresponding orientation.  

On uncut cells, we set $\widehat p_h=p_h$.

{
\begin{rem}[Equivalent reconstructed formulation]
The recovery introduces no additional approximation beyond the discrete model.
Let $\widehat{v}_h=\mathcal R_h(v_h,\mathbf t)$ denote the broken field obtained by applying the reconstruction above to a continuous nodal function $v_h$ and a collection of local jump variables $\mathbf t=(\mathbf{t}_T)_{T\in\mathcal T_h^\Gamma}$.
By the local energy identities established above, the uncondensed discrete energy functional can be written as
\[
J_h(v_h,\mathbf t)
=
\frac12
a_h\bigl(\mathcal R_h(v_h,\mathbf t),
         \mathcal R_h(v_h,\mathbf t)\bigr)
-\ell(v_h),
\]
where
\[
a_h(\widehat{w},\widehat{v})
:=
\sum_{T\in\mathcal T_h}
\int_{T\setminus\Gamma}
\mathbf K_m\nabla \widehat{w}\cdot\nabla \widehat{v}\,d\mathbf{x}
+
\frac1R
\sum_{T\in\mathcal T_h^\Gamma}
\int_{\Gamma_T}[\widehat{w}][\widehat{v}]\,ds .
\]
Since $(p_h,\mathbf t^\ast)$ is the joint minimizer of this functional, the recovered solution $\widehat p_h=\mathcal R_h(p_h,\mathbf t^\ast)$ satisfies
\[
a_h(\widehat p_h,\widehat v_h)
=
\ell(v_h)
\qquad
\forall\,
\widehat v_h=\mathcal R_h(v_h,\mathbf s),
\quad
v_h\in V_{h,0},
\quad
\mathbf s\ \text{arbitrary}.
\]
\end{rem}}

\section{Numerical results}\label{sec:num}

The numerical experiments in this section evaluate the performance of the proposed method on unfitted triangular meshes. 
Two quantities are obtained: the standard finite element solution $p_h$ in Section~\ref{sec:method} and its recovered broken solution $\widehat p_h$ in Section~\ref{sec:recovery}, and they are compared with the reference solutions of published benchmarks \cite{flemisch2018benchmarks, berre2021verification}.
The Darcy velocity $\mathbf{u}_h$ is computed from $\widehat{p}_{h}$ in Example 1 for convergence studies.
In all other examples, the source term is $f=0$ and the flow is driven by the boundary conditions.

\subsection{Example 1: convergence study}\label{sec:ex1}

We first study the convergence of the proposed method to manufactured solutions \cite{liu2026high}.
Two permeability fields are considered on the domain $\Om=[0,1]^2$: one with homogeneous isotropic permeability and one with heterogeneous anisotropic permeability.
Each setting is tested in two geometric configurations.
In all cases, the barrier resistance is fixed at $R=a/k_b=1$, and Dirichlet conditions are prescribed from the exact pressure. 
The meshes are obtained by dividing $\Omega$ into $N\times N$ squares
and cutting each square along the diagonal, with $N\in\{11,21,41,81,161,321\}$.
Representative mesh and barrier configurations are shown in Figure~\ref{fig:ex1mesh}.

\begin{figure}[htbp!]
\centering
\begin{subfigure}[b]{0.44\textwidth}
\includegraphics[width=\textwidth]{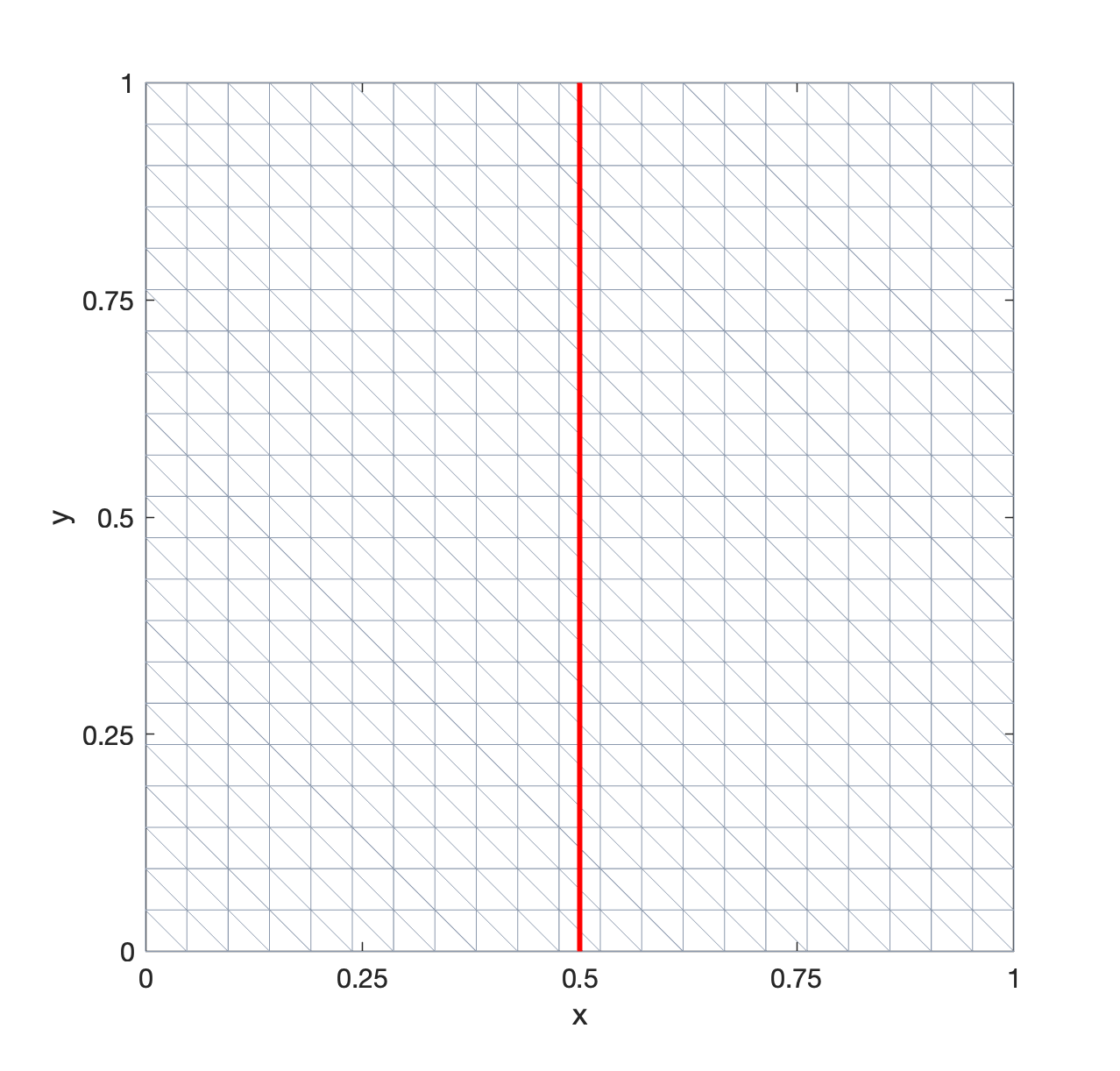}
  \caption{Vertical barrier used in cases (a) and (b).}
\end{subfigure}
\hfill
\begin{subfigure}[b]{0.44\textwidth}
\includegraphics[width=\textwidth]{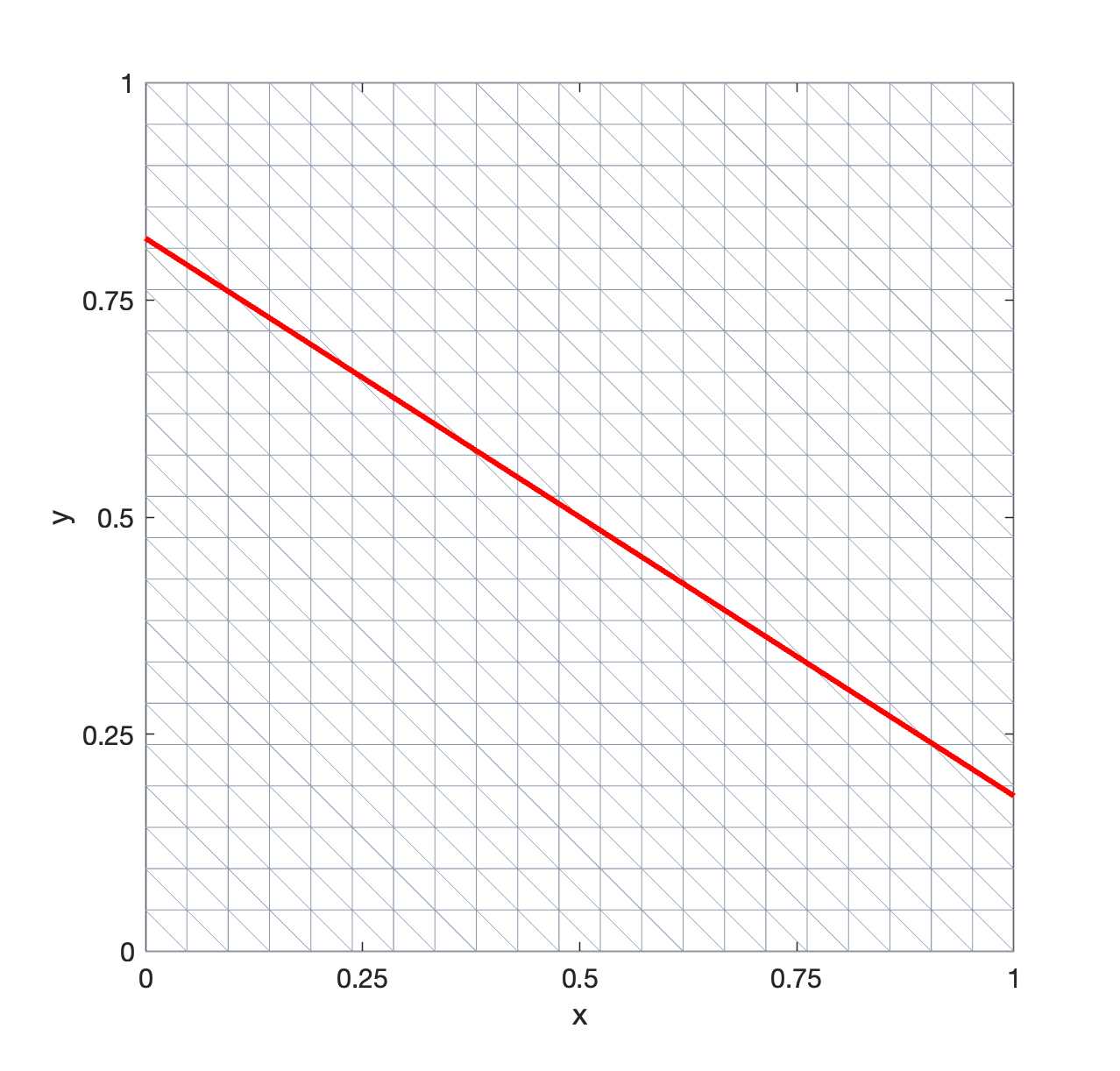}
  \caption{Slanted barrier used in cases (c) and (d).}
\end{subfigure}
\caption{Example~1: structured triangular mesh with $N=21$ and the barrier shown in red. In the axis-aligned configuration, the barrier is the vertical line $x=\tfrac12$ and crosses the interiors of the cells in the central column. 
The rotated configuration is obtained by rotating the barrier and the associated manufactured problem by one radian about $(\tfrac12,\tfrac12)$, while leaving the computational domain and mesh unchanged.}
\label{fig:ex1mesh}
\end{figure}

\emph{Case (a): homogeneous isotropic permeability.} 
With $\Km=\mathbf I$ and $\Gamma=\{(x,y)\in\Omega:x=\tfrac12\}$, we prescribe the pressure as 
\[p=\sin x\,\sin y+\chi(x>\frac12)\cos\tfrac12\,\sin y,\]
where $\chi$ is the indicator function.
The source term is 
\[f=2\sin x\,\sin y+\chi(x>\frac12)\cos\tfrac12\,\sin y.\]

\emph{Case (b): heterogeneous anisotropic permeability.}
We again take $\Gamma=\{(x,y)\in\Omega:x=\tfrac12\}$, but now use a full permeability tensor,
\begin{equation*}\label{eq:ex1bK}
\Km(x,y)=k(x,y)\,\mathbf A,
\qquad
k(x,y)=2+\rho\sin(\pi x)\sin(\pi y),
\qquad
\mathbf A=
\begin{pmatrix}
1-\tfrac\eta4 & \tfrac{\sqrt3}4\eta\\[2pt]
\tfrac{\sqrt3}4\eta & 1-\tfrac{3\eta}4
\end{pmatrix},
\end{equation*}
with $\rho=1$ and $\eta=0.9$. 
The scalar factor $k$ makes the medium
heterogeneous.
The matrix $\mathbf A$ has eigenvalues $1$ and
$1-\eta$, giving an anisotropy ratio of $10$, and its eigen-directions
are rotated by $30^\circ$ relative to the mesh axes. 
Writing $\alpha=1-\eta/4$, $\beta=\tfrac{\sqrt3}{4}\eta$, the exact pressure is
\[p=\sin x\,\sin y+\chi(x>\frac12)\left(J(y)-\frac{\beta}{\alpha}
\bigl(x-\tfrac12\bigr)J'(y)\right),\]
where $J(y)=k\bigl(\tfrac12,y\bigr)
\left[
\alpha\cos\tfrac12\,\sin y
+\beta\sin\tfrac12\,\cos y
\right].$
The source is evaluated based on $f=-\nabla\cdot(\Km\nabla p)$, which is omitted here to save space.

\emph{Cases (c) and (d): rotated counterparts of cases (a) and (b).}
The vertical barrier in cases (a) and (b) is parallel to the mesh columns, which could in principle produce artificially favorable behavior. To test whether the method depends on this alignment, we construct cases (c) and (d) by rotating the complete data of cases (a) and (b) by $\theta=1$ around the center $\mathbf{c}=(\tfrac12,\tfrac12)$, while leaving the computational domain and mesh unchanged.
In both cases, the barrier has no
special relation to the mesh directions.

In Table~\ref{tab:ex1}, we report the $L^2$ errors and observed convergence orders of the continuous pressure $p_h$, the recovered broken pressure $\widehat p_h$, and the Darcy velocity $\mathbf{u}_h=-\mathbf{K}_{m}\nabla\widehat{p}_{h}$.
The four cases exhibit the same qualitative behavior, indicating that the
observed convergence is not an artifact of barrier-mesh alignment. In every case, the single-valued pressure satisfies $\|p-p_h\|_{L^2}\sim h^{1/2}.$
Because $p_h$ is continuous, it must smear the $O(1)$ pressure jump across the one-cell-thick layer of cut elements. Its $L^2$ error is therefore limited to the half-order rate associated with approximating a discontinuous solution by continuous functions.
The local recovery removes this smearing by restoring the pressure drop at the barrier interface. 
Consequently, $\widehat p_h$ converges substantially faster than $p_h$. 
Over the tested mesh range, the observed rates are approximately $1.3$--$1.5$ in the axis-aligned cases and approximately $1.2$--$1.3$ in the rotated cases. 
The Darcy velocity remains approximately half-order accurate, $\|\mathbf u-\mathbf u_h\|_{L^2}\sim h^{1/2}$. The recovered pressure has only one bulk gradient on each cut element and therefore cannot fully represent the variation of the pressure within that element. The velocity error consequently remains limited by this unresolved subcell variation near the barrier.

\begin{table}[htbp!]
\centering
\small
\setlength{\tabcolsep}{5pt}
\renewcommand{\arraystretch}{1.05}
\caption{Example~1, $L^2$ errors and observed convergence orders of $p_h$, $\widehat p_h$,
and $\mathbf u_h$. Cases~(a) and~(b) use the axis-aligned barrier $\Gamma=\{x=\tfrac12\}$, whereas cases~(c) and~(d) are their counterparts rotated by $\theta=1$ about the center $(\tfrac12,\tfrac12)$.}
\label{tab:ex1}
\begin{tabular}{@{}r rr rr rr@{}}
\toprule
\multicolumn{7}{c}{
Case (a). Homogeneous isotropic matrix permeability.}\\
\toprule
$N$
& $\|p-p_h\|_{L^2}$ & order
& $\|p-\widehat p_h\|_{L^2}$ & order
& $\|\mathbf u-\mathbf u_h\|_{L^2}$ & order \\
\midrule
11  & 4.00e-2 & --   & 2.61e-3 & --   & 1.30e-1 & --\\
21  & 2.89e-2 & 0.50 & 9.25e-4 & 1.60 & 8.84e-2 & 0.60\\
41  & 2.07e-2 & 0.50 & 3.31e-4 & 1.53 & 6.10e-2 & 0.56\\
81  & 1.47e-2 & 0.50 & 1.22e-4 & 1.47 & 4.25e-2 & 0.53\\
161 & 1.04e-2 & 0.50 & 4.67e-5 & 1.40 & 2.98e-2 & 0.52\\
321 & 7.38e-3 & 0.50 & 1.89e-5 & 1.31 & 2.10e-2 & 0.51\\
\toprule
\multicolumn{7}{c}{
Case (b). Heterogeneous anisotropic matrix permeability.}\\
\toprule
$N$
& $\|p-p_h\|_{L^2}$ & order
& $\|p-\widehat p_h\|_{L^2}$ & order
& $\|\mathbf u-\mathbf u_h\|_{L^2}$ & order \\
\midrule
11  & 1.15e-1 & --   & 1.26e-2 & --   & 3.13e-1 & --\\
21  & 8.23e-2 & 0.51 & 4.19e-3 & 1.70 & 1.76e-1 & 0.89\\
41  & 5.88e-2 & 0.50 & 1.38e-3 & 1.67 & 9.66e-2 & 0.89\\
81  & 4.18e-2 & 0.50 & 4.66e-4 & 1.59 & 5.42e-2 & 0.85\\
161 & 2.97e-2 & 0.50 & 1.66e-4 & 1.50 & 3.17e-2 & 0.78\\
321 & 2.10e-2 & 0.50 & 6.25e-5 & 1.41 & 1.96e-2 & 0.70\\
\toprule
\multicolumn{7}{c}{
Case (c). Homogeneous isotropic matrix permeability.}\\
\toprule
$N$
& $\|p-p_h\|_{L^2}$ & order
& $\|p-\widehat p_h\|_{L^2}$ & order
& $\|\mathbf u-\mathbf u_h\|_{L^2}$ & order \\
\midrule
11  & 5.26e-2 & --   & 4.80e-3 & --   & 1.45e-1 & --\\
21  & 3.73e-2 & 0.53 & 2.29e-3 & 1.14 & 1.14e-1 & 0.37\\
41  & 2.67e-2 & 0.50 & 8.70e-4 & 1.45 & 8.18e-2 & 0.50\\
81  & 1.88e-2 & 0.51 & 3.23e-4 & 1.46 & 5.73e-2 & 0.52\\
161 & 1.33e-2 & 0.50 & 1.36e-4 & 1.26 & 4.08e-2 & 0.49\\
321 & 9.45e-3 & 0.50 & 5.56e-5 & 1.29 & 2.89e-2 & 0.50\\
\toprule
\multicolumn{7}{c}{
Case (d). Heterogeneous anisotropic matrix permeability.}\\
\toprule
$N$
& $\|p-p_h\|_{L^2}$ & order
& $\|p-\widehat p_h\|_{L^2}$ & order
& $\|\mathbf u-\mathbf u_h\|_{L^2}$ & order \\
\midrule
11  & 1.36e-1 & --   & 1.26e-2 & --   & 2.96e-1 & --\\
21  & 9.92e-2 & 0.48 & 7.11e-3 & 0.88 & 2.51e-1 & 0.26\\
41  & 7.05e-2 & 0.51 & 2.70e-3 & 1.45 & 1.76e-1 & 0.52\\
81  & 4.99e-2 & 0.51 & 1.09e-3 & 1.34 & 1.25e-1 & 0.50\\
161 & 3.54e-2 & 0.50 & 4.68e-4 & 1.23 & 8.95e-2 & 0.49\\
321 & 2.50e-2 & 0.50 & 2.09e-4 & 1.17 & 6.33e-2 & 0.50\\
\bottomrule
\end{tabular}
\end{table}

\subsection{Example 2: a single strong barrier}\label{sec:ex2}
We now turn to a single, nearly impermeable barrier that terminates at a free tip \cite{angot2009asymptotic,xu2024box}.
On $\Om=[0,1]^2$ with $\Km=\mathbf I$, the left and right boundaries are Dirichlet with $p=0$ and $p=1$, respectively, while the top and bottom are impermeable. The barrier is nearly sealing with $R=a/k_b=10^5$. Two
configurations are solved on the same unstructured mesh of size $h\approx0.06$: a vertical barrier
from $(\tfrac12,\tfrac12)$ to $(\tfrac12,1)$ and a slanted barrier from $(\tfrac14,\tfrac34)$ to
$(\tfrac34,\tfrac14)$.

Figures~\ref{fig:ex2-vertical} and~\ref{fig:ex2-slanted} show the computational results together with the background mesh. 
The flow bends around the free tip, where both pressure fields remain continuous, while the nearly sealing barrier
supports a large pressure drop. 
The sampling lines are visualized in the pressure fields, and slices of continuous $p_{h}$ and 
recovered broken $\widehat{p}_{h}$ along the lines are compared with fitted fine-grid reference solutions of the Box-DFM \cite{xu2024box} with roughly $2.3\times10^{4}$ cells. 
From the slices we can observe that $p_{h}$ and $\widehat{p}_{h}$ agree on uncut cells and $\widehat{p}_{h}$ recovers the pressure jump smeared by the finite element solution $p_{h}$ at the barrier interface.
The pronounced smearing of $p_h$ in Figure \ref{fig:ex2-slanted}(c) is within expectation because the barrier happens to span three cells in the horizontal direction along the sampling line; see \ref{fig:ex2-slanted}(a).

\begin{figure}[htbp!]\centering
\begin{subfigure}[b]{0.32\textwidth}\includegraphics[width=\textwidth]{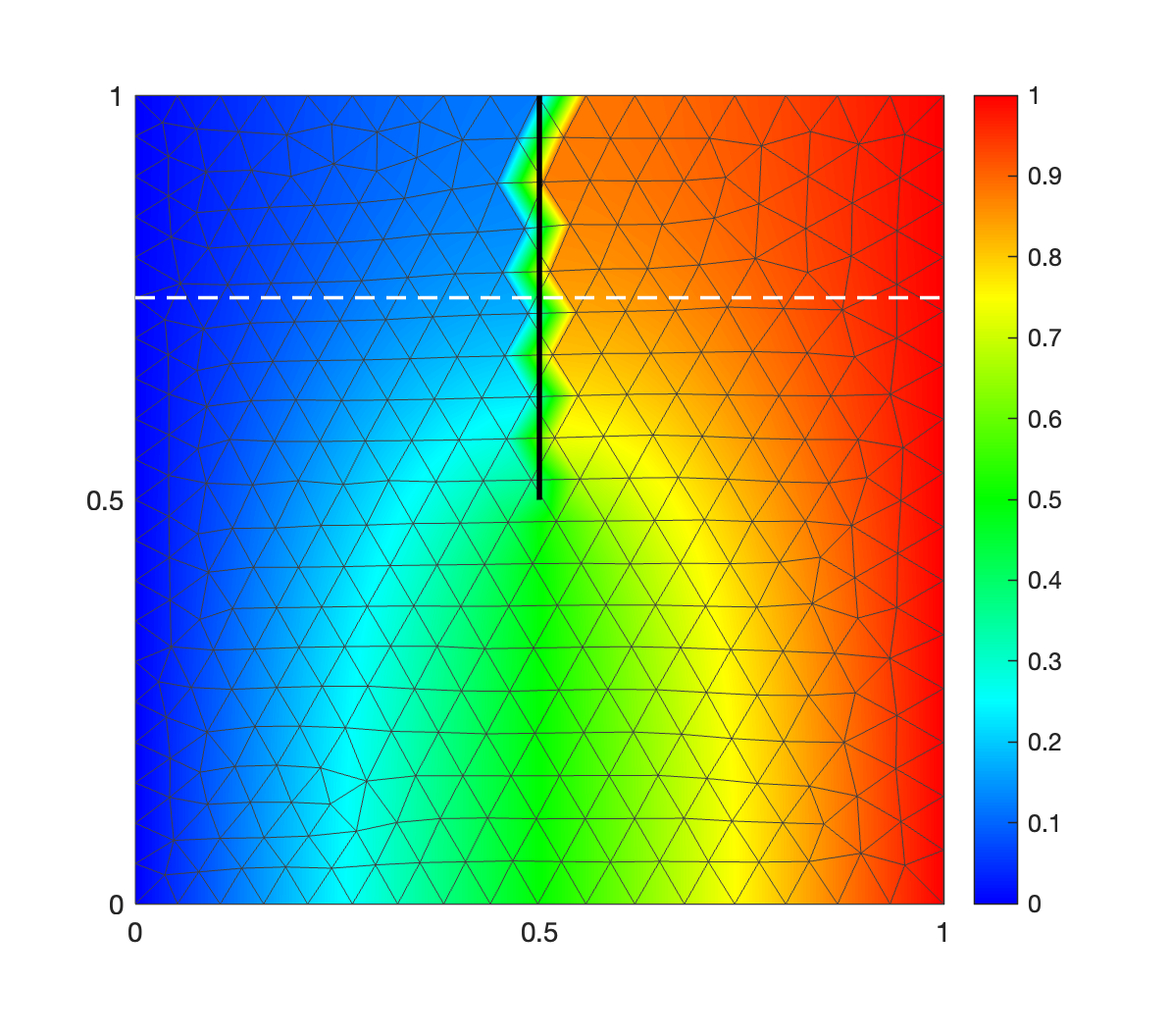}
  \caption{continuous $p_h$}\end{subfigure}\hfill
\begin{subfigure}[b]{0.32\textwidth}\includegraphics[width=\textwidth]{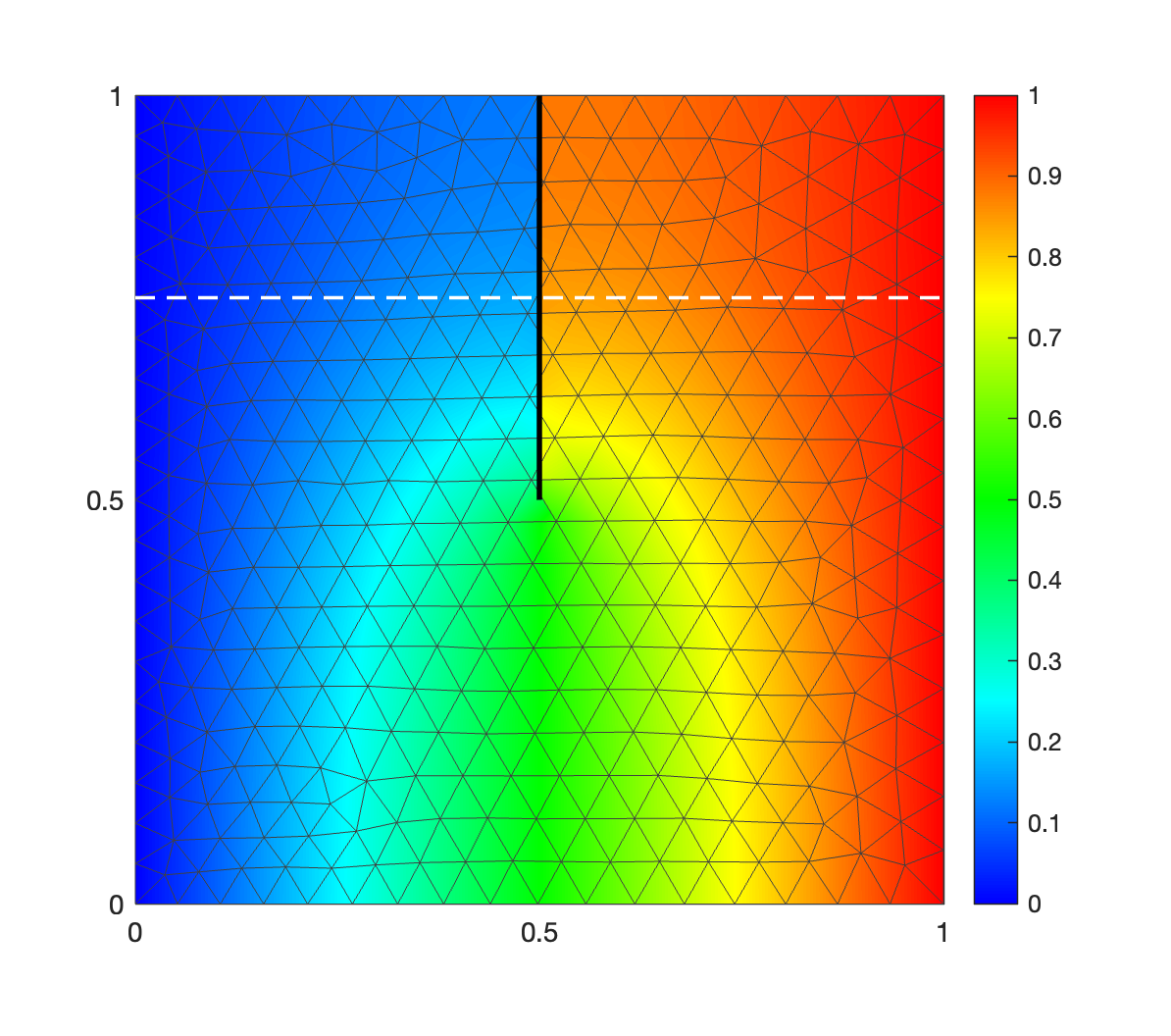}
  \caption{recovered broken $\widehat p_h$}\end{subfigure}\hfill
\begin{subfigure}[b]{0.34\textwidth}\includegraphics[width=\textwidth]{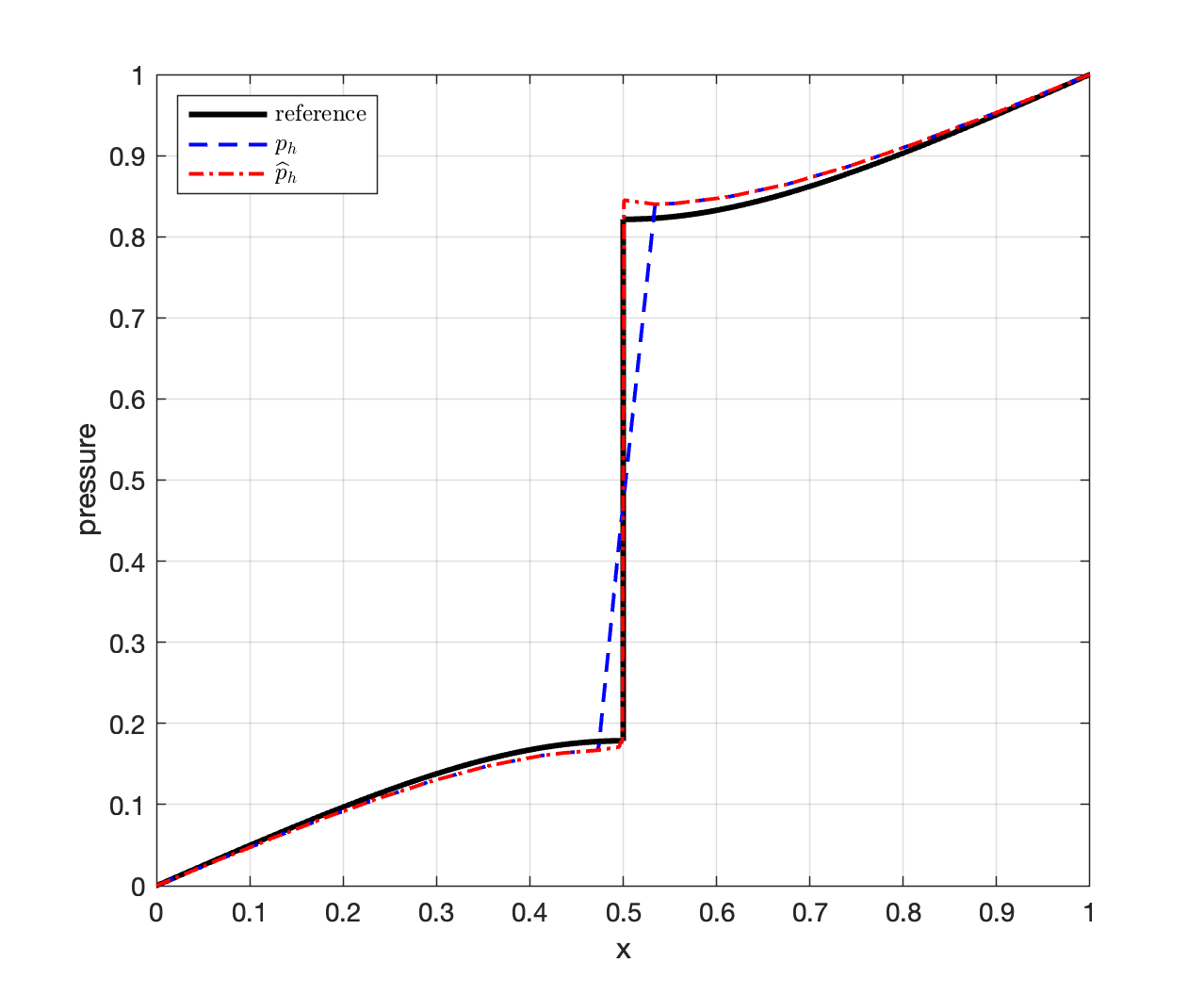}
  \caption{slice along $y=0.75$}\end{subfigure}
\caption{Example~2, vertical barrier $(\tfrac12,\tfrac12)$--$(\tfrac12,1)$, $R=10^5$. (a) continuous
pressure, (b) recovered broken pressure, and (c) the profile along $y=0.75$ against the Box-DFM \cite{xu2024box} reference solution obtained from a fitted fine grid containing $23,306$ cells.}
\label{fig:ex2-vertical}
\end{figure}

\begin{figure}[htbp!]\centering
\begin{subfigure}[b]{0.32\textwidth}\includegraphics[width=\textwidth]{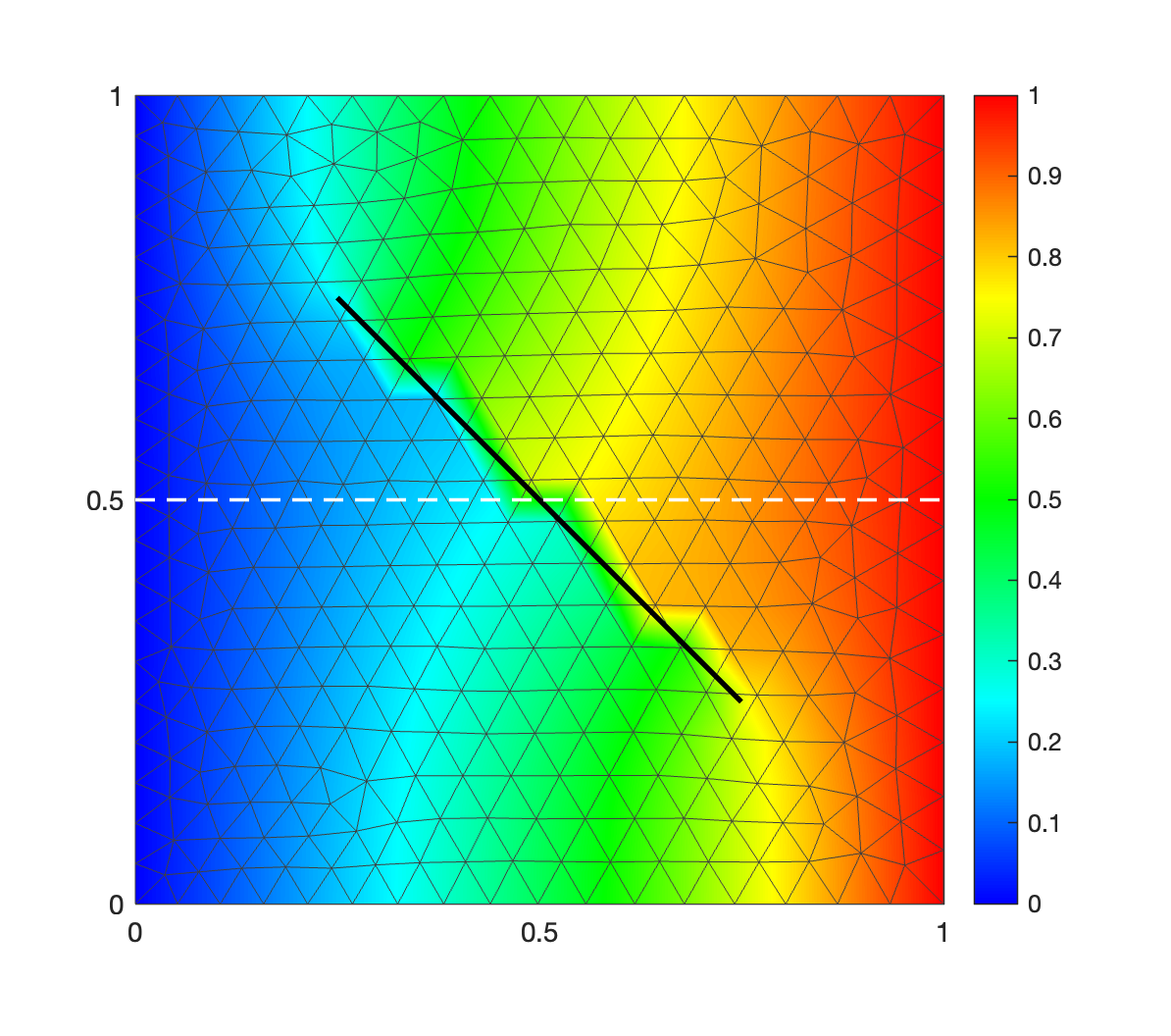}
  \caption{continuous $p_h$}\end{subfigure}\hfill
\begin{subfigure}[b]{0.32\textwidth}\includegraphics[width=\textwidth]{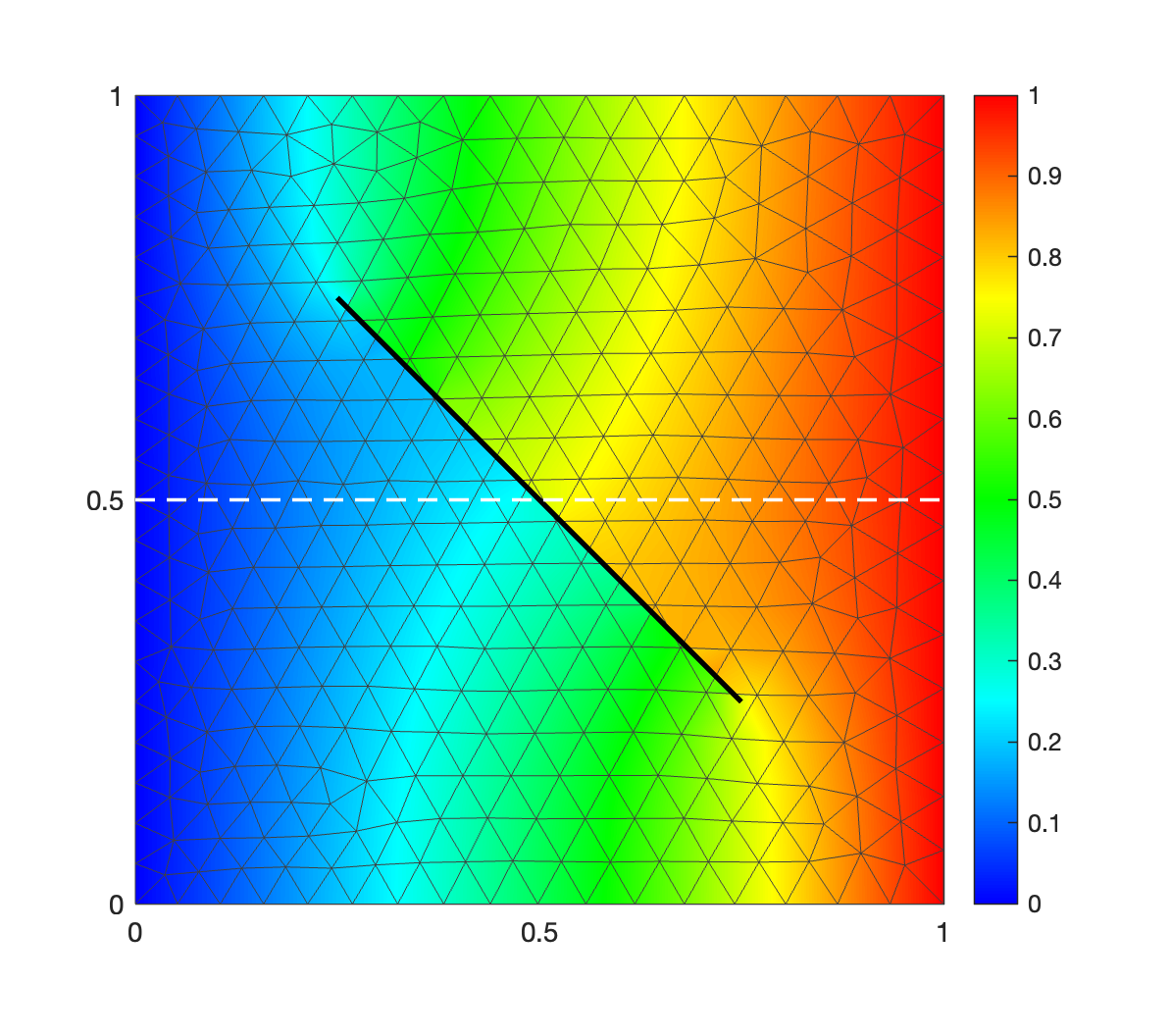}
  \caption{recovered broken $\widehat p_h$}\end{subfigure}\hfill
\begin{subfigure}[b]{0.34\textwidth}\includegraphics[width=\textwidth]{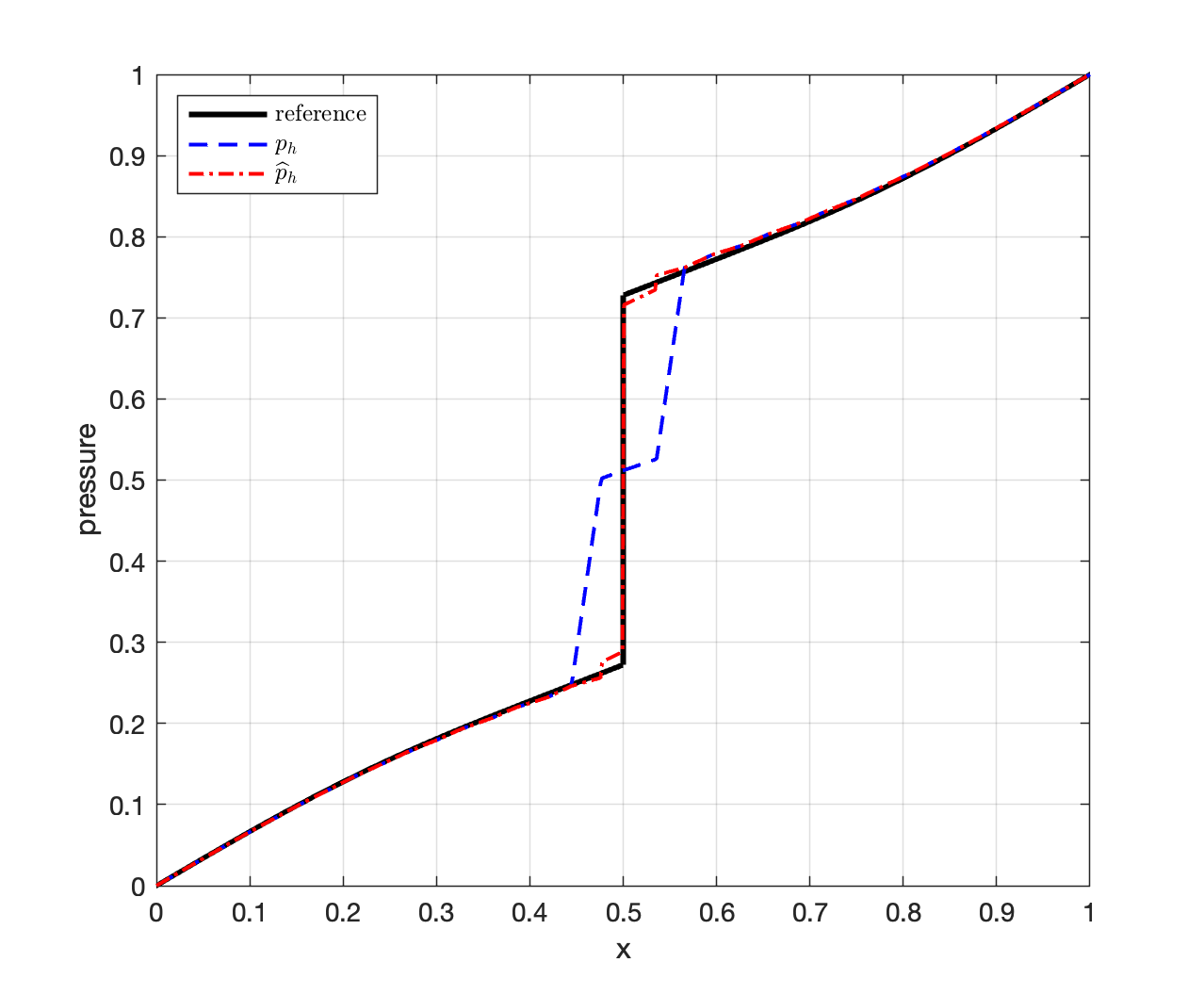}
  \caption{slice along $y=0.5$}\end{subfigure}
\caption{Example~2, slanted barrier $(\tfrac14,\tfrac34)$--$(\tfrac34,\tfrac14)$ on the same mesh, $R=10^{5}$. 
(a) continuous pressure, (b) recovered broken pressure, and (c) the profile along $y=0.5$ against the Box-DFM \cite{xu2024box} reference solution obtained from a fitted fine grid containing $23,455$ cells.}
\label{fig:ex2-slanted}
\end{figure}

\subsection{Example 3: a regular barrier network}\label{sec:ex3}

We next solve the regular network of six intersecting barriers of
\cite{flemisch2018benchmarks} on $\Om=[0,1]^2$ with $\Km=\mathbf I$. 
These barriers lie along the lines $x=0.5$ and $y=0.5$, each with length $1$; along the lines $x=0.75$ and $y=0.75$, each with length $0.5$; and along the lines $x=0.625$ and $y=0.625$, each with length $0.25$.
Each barrier has aperture and permeability $a=k_b=10^{-4}$, so $R=a/k_b=1$. 
The right boundary is Dirichlet with $p=1$, the left boundary carries a unit inflow ($\mathbf u\cdot\mathbf n=-1$), and the top and bottom are
impermeable. 
The barriers meet in X- and T-junctions, which are treated by the method of Section~\ref{sec:junctions}. 
An unstructured mesh of size $h\approx0.03$ is used.
The barrier coordinates are displaced by a small perturbation
$(7\times10^{-4},\,7.4\times10^{-3})$, so that no mesh entity degenerates onto a barrier.

Figure~\ref{fig:ex3} shows the pressure fields $p_h$ and $\widehat{p}_h$ on the background mesh, and the profile along the slice $(0,0.1)$--$(0.9,1)$.
A reference solution of the equidimensional mimetic finite-difference method  \cite{flemisch2018benchmarks} with about $1.2\times10^6$ cells joints the comparison.
We can see that the barriers separate regions of slowly varying pressure by finite jumps.
The recovered $\widehat p_h$ reproduces the position and the height of every step, while $p_h$ rounds each step over the cut
layer.

\begin{figure}[htbp!]\centering
\begin{subfigure}[b]{0.32\textwidth}\includegraphics[width=\textwidth]{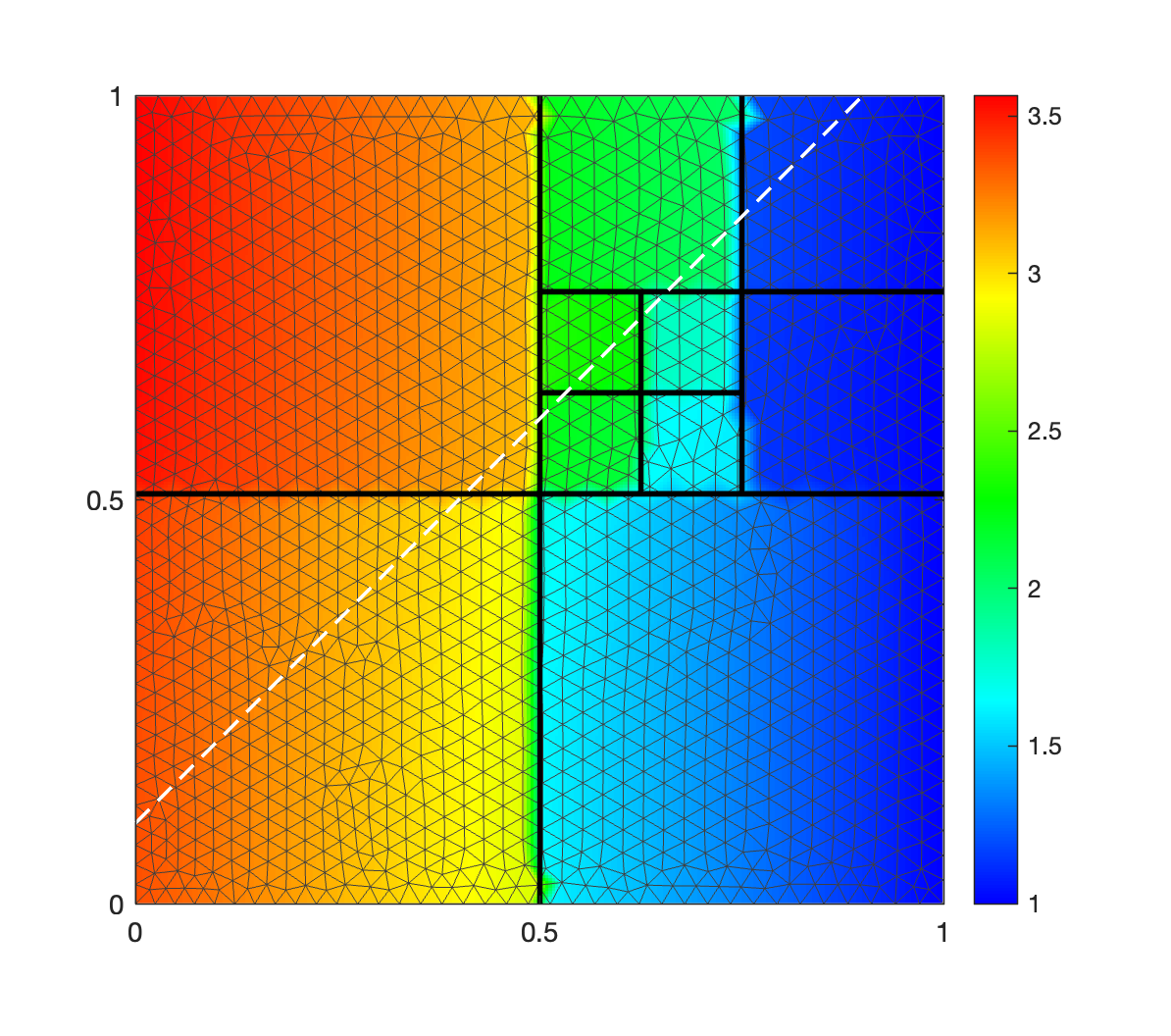}
  \caption{continuous $p_h$}\end{subfigure}\hfill
\begin{subfigure}[b]{0.32\textwidth}\includegraphics[width=\textwidth]{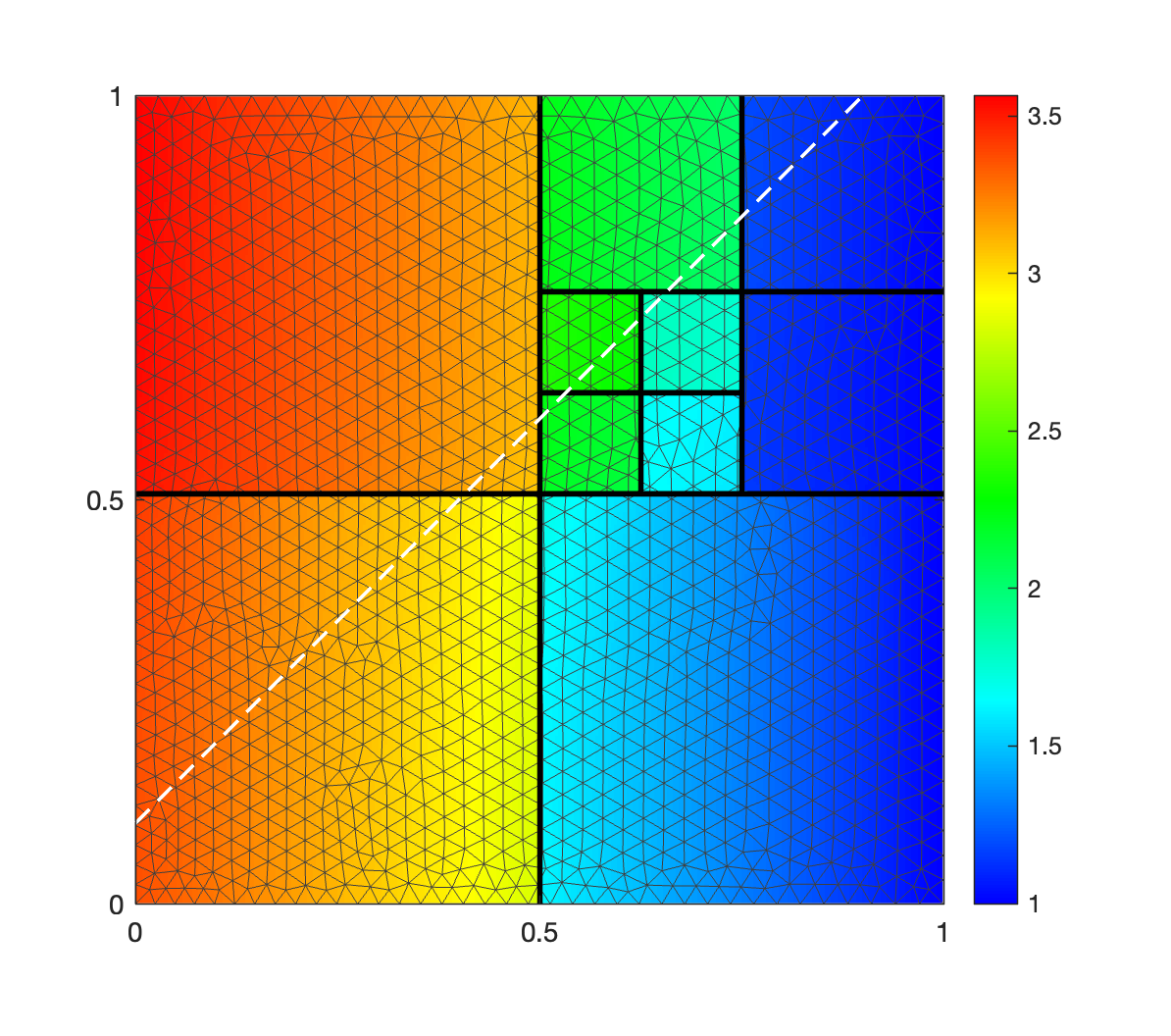}
  \caption{recovered broken $\widehat p_h$}\end{subfigure}\hfill
\begin{subfigure}[b]{0.34\textwidth}\includegraphics[width=\textwidth]{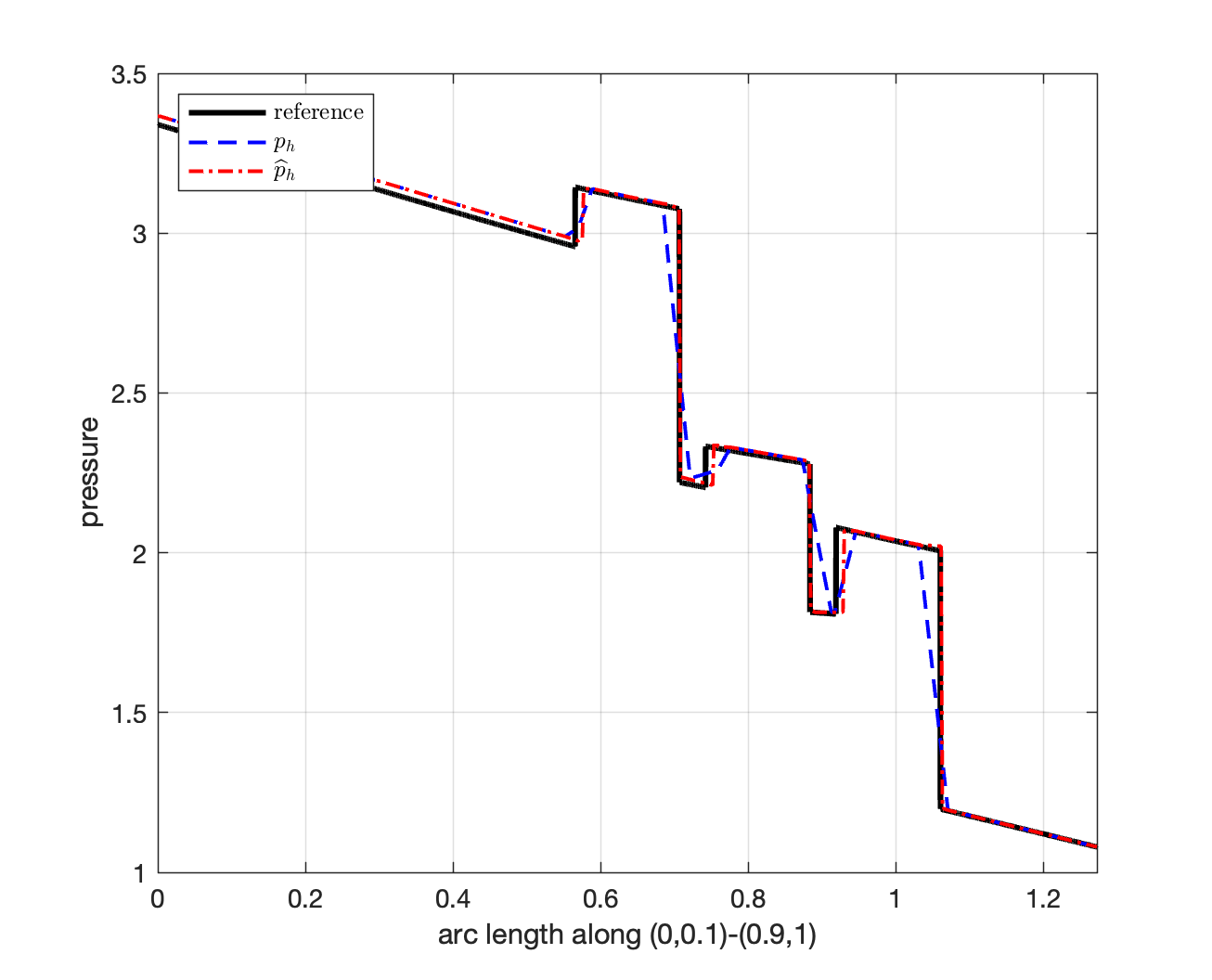}
  \caption{slice along $(0,0.1)$--$(0.9,1)$}\end{subfigure}
\caption{Example~3, regular network of six intersecting barriers, $R=1$: (a) continuous pressure,
(b) recovered broken pressure, and (c) the profile along $(0,0.1)$--$(0.9,1)$ against the mimetic
finite-difference reference of \cite{flemisch2018benchmarks}.}
\label{fig:ex3}
\end{figure}

\subsection{Example 4: a complex fracture-barrier network}\label{sec:ex4}

This example combines conductive fractures with blocking barriers, in the complex network of the third benchmark case of \cite{flemisch2018benchmarks}, where the coordinates of all fractures and barriers can be found in Appendix C. 
On $\Om=[0,1]^2$ with $\Km=\mathbf I$, eight fractures of permeability $k_f=10^4$ and two barriers of permeability $k_b=10^{-4}$ share the aperture $a=10^{-4}$, so the fractures conduct with $ak_f=1$ and the barriers block with $R=a/k_b=1$.
The conductive fractures are treated using the FEM-DFM proposed by the author and a collaborator in \cite{xu2020hybrid} for unfitted meshes. 
The method adds the tangential $1$D stiffness matrices to the local stiffness matrix of each cell the fracture crosses. 
The pressure remains continuous across each fracture.
The blocking barriers are treated using the method established in Section \ref{sec:method}.
Two treatment are fully compatible and can be combined naturally within the same finite element formulation.
Where a fracture meets a barrier, we adopt a barrier-dominant rule: the fracture contribution is dropped in that cell and only the barrier will be considered, so the pressure is discontinuous at the intersection.
Two flow regimes are computed on the same mesh: in the first, the top boundary is held at $p=4$ and the bottom at $p=1$; in the second, the left at $p=4$ and the right at $p=1$; the two remaining sides are impermeable in each regime. 
The unstructured mesh has size $h\approx0.03$,
and the two barriers are displaced by the small perturbation
$(6\times10^{-4},\,4\times10^{-4})$ to avoid degenerate incidences with the mesh.

Figures~\ref{fig:ex4-t2b} and~\ref{fig:ex4-l2r} show both regimes together with the background mesh, and the profile along the slice $(0,0.5)$--$(1,0.9)$, which crosses both types of fractures. In the profile, the two types are easy to tell apart: where the slice meets a conductive fracture the pressure stays continuous and only its slope changes, while at each of the two barriers it falls by a finite
jump, which $\widehat p_h$ resolves and $p_h$ rounds off. 
The reference is again the equidimensional mimetic finite-difference computation of \cite{flemisch2018benchmarks}, with about $2.3\times10^6$ cells for this geometry. 
The recovered pressure reproduces the position and the size of both jumps. 
The small remaining discrepancy near the fracture-barrier intersections may arise from the different treatments for them. Our method only considers the barrier effect, whereas the methods in \cite{flemisch2018benchmarks} use the harmonic mean of the fracture and barrier permeabilities.

\begin{figure}[htbp!]\centering
\begin{subfigure}[b]{0.32\textwidth}\includegraphics[width=\textwidth]{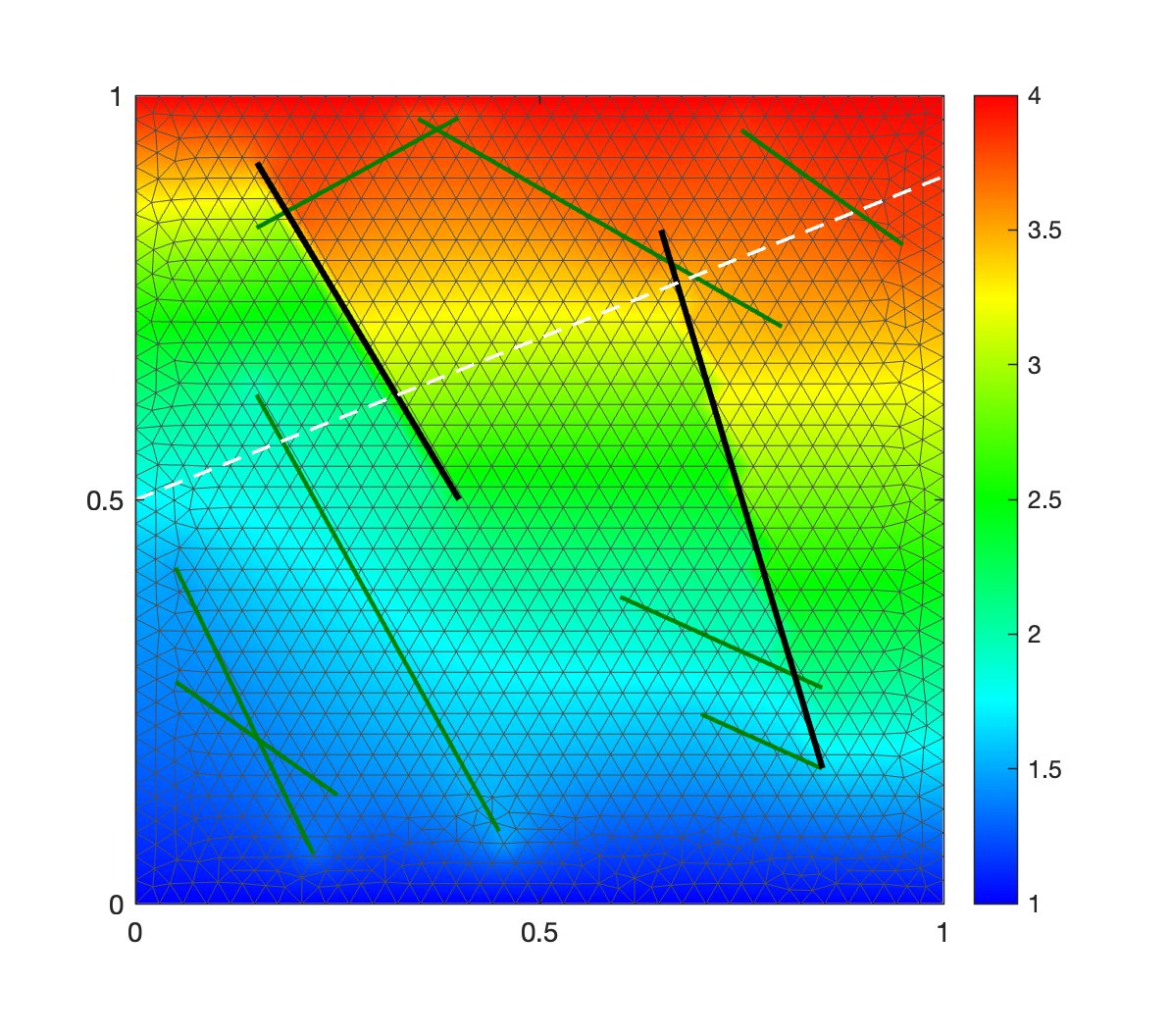}
  \caption{continuous $p_h$}\end{subfigure}\hfill
\begin{subfigure}[b]{0.32\textwidth}\includegraphics[width=\textwidth]{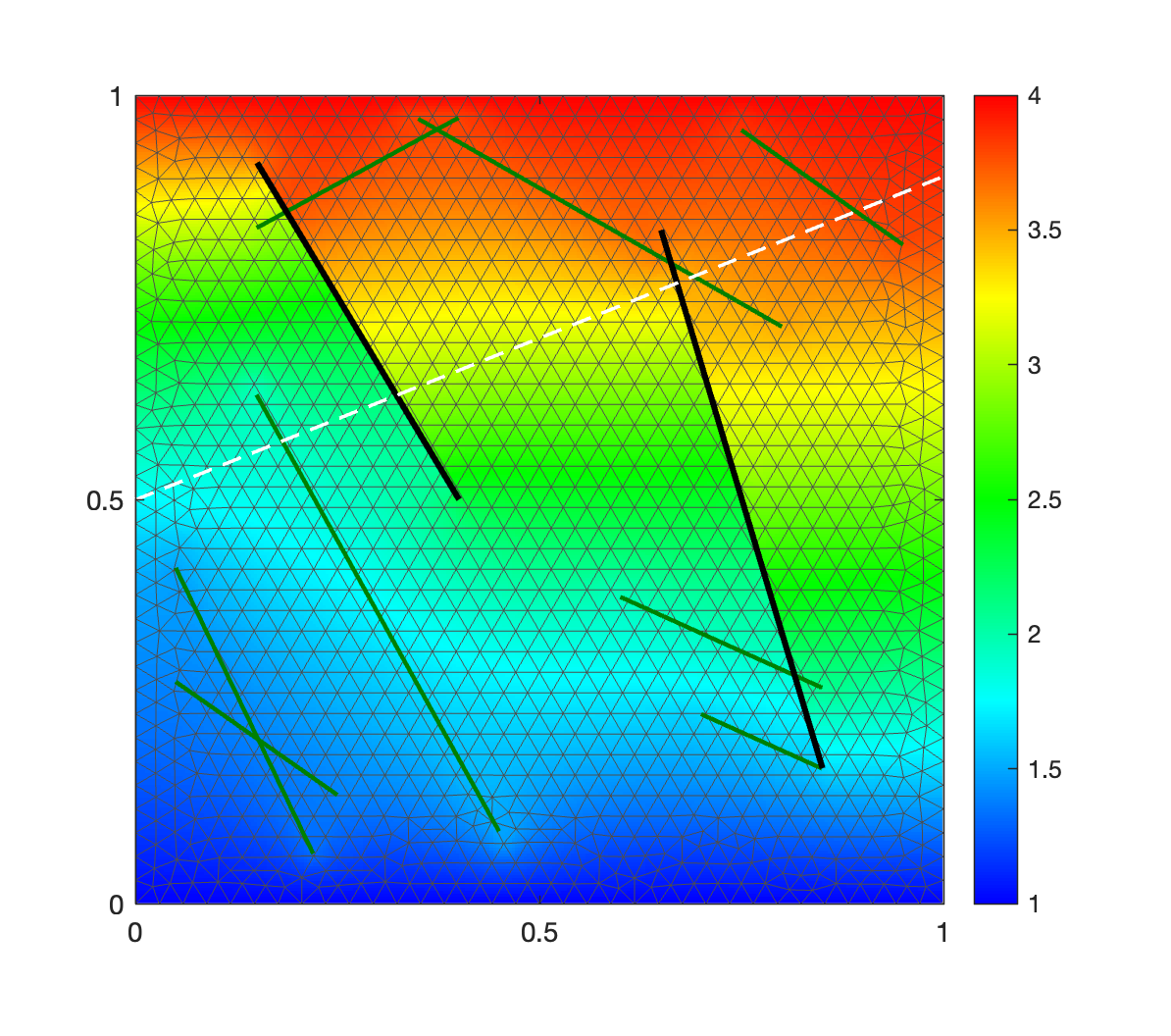}
  \caption{recovered broken $\widehat p_h$}\end{subfigure}\hfill
\begin{subfigure}[b]{0.34\textwidth}\includegraphics[width=\textwidth]{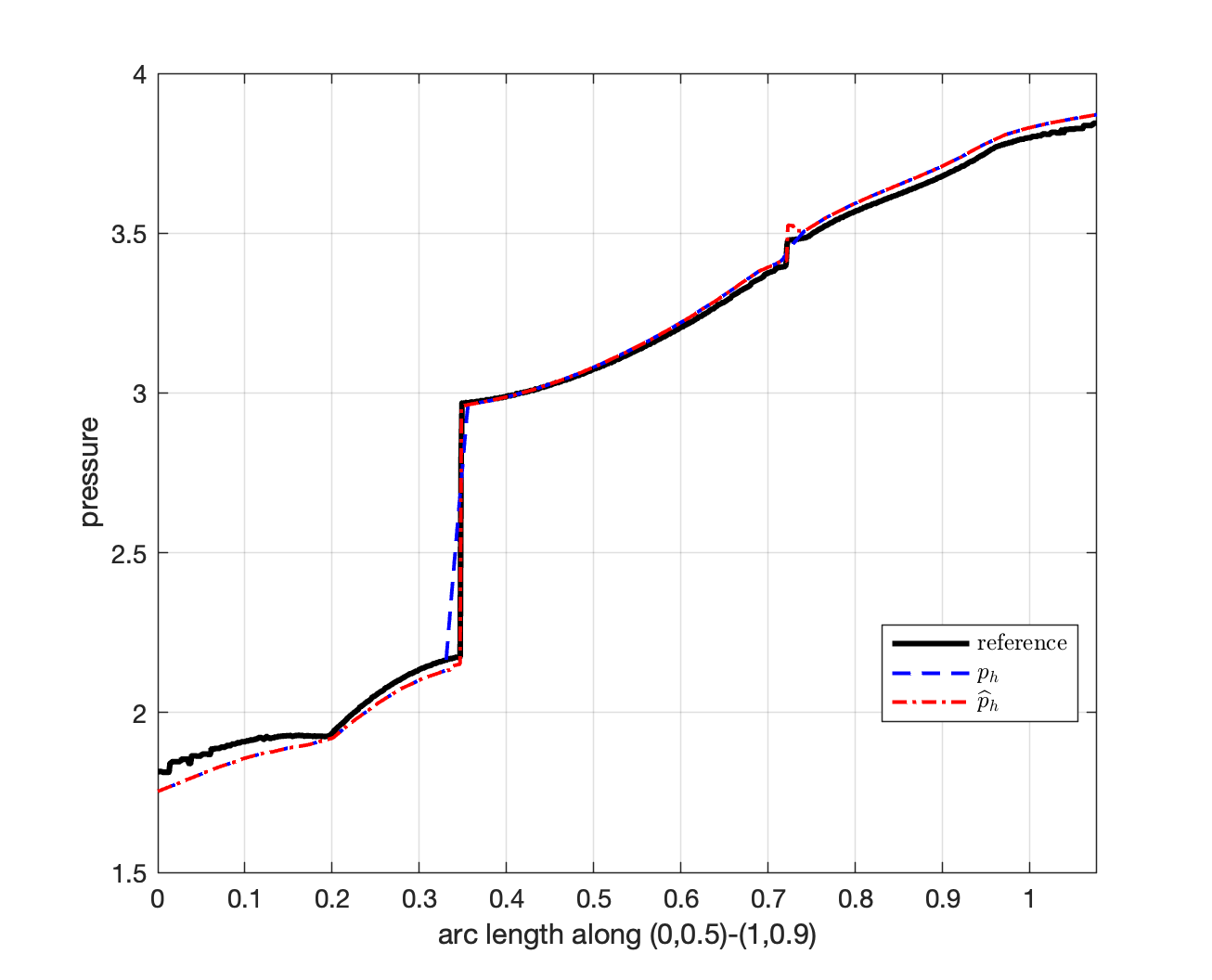}
  \caption{slice along $(0,0.5)$--$(1,0.9)$}\end{subfigure}
\caption{Example~4, top-to-bottom flow: (a) continuous pressure, (b) recovered broken pressure, and (c) the profile along $(0,0.5)$--$(1,0.9)$ against the mimetic finite-difference reference of
\cite{flemisch2018benchmarks}.
Fractures are shown in in green and barriers are in black.}
\label{fig:ex4-t2b}
\end{figure}

\begin{figure}[htbp!]\centering
\begin{subfigure}[b]{0.32\textwidth}\includegraphics[width=\textwidth]{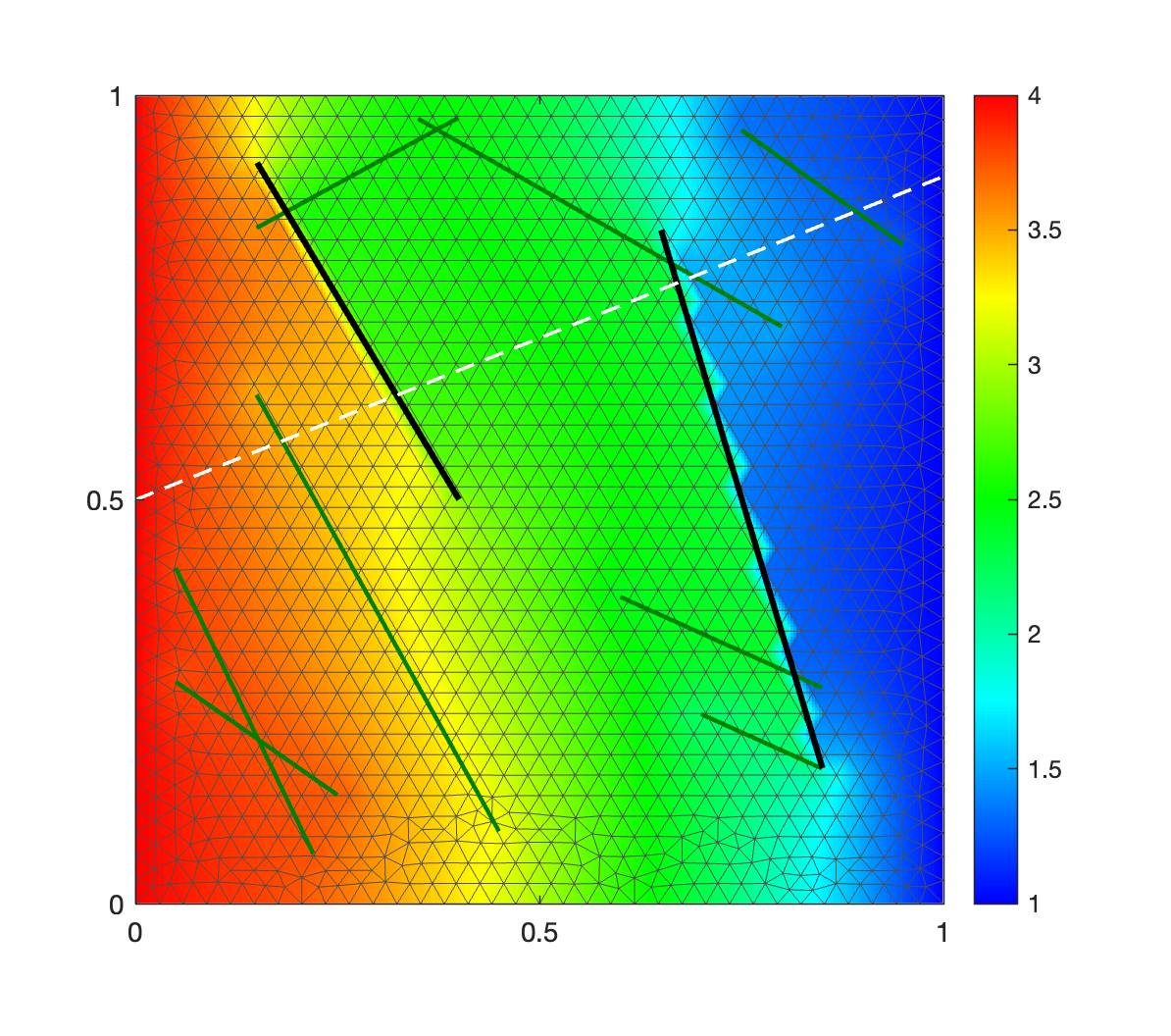}
  \caption{continuous $p_h$}\end{subfigure}\hfill
\begin{subfigure}[b]{0.32\textwidth}\includegraphics[width=\textwidth]{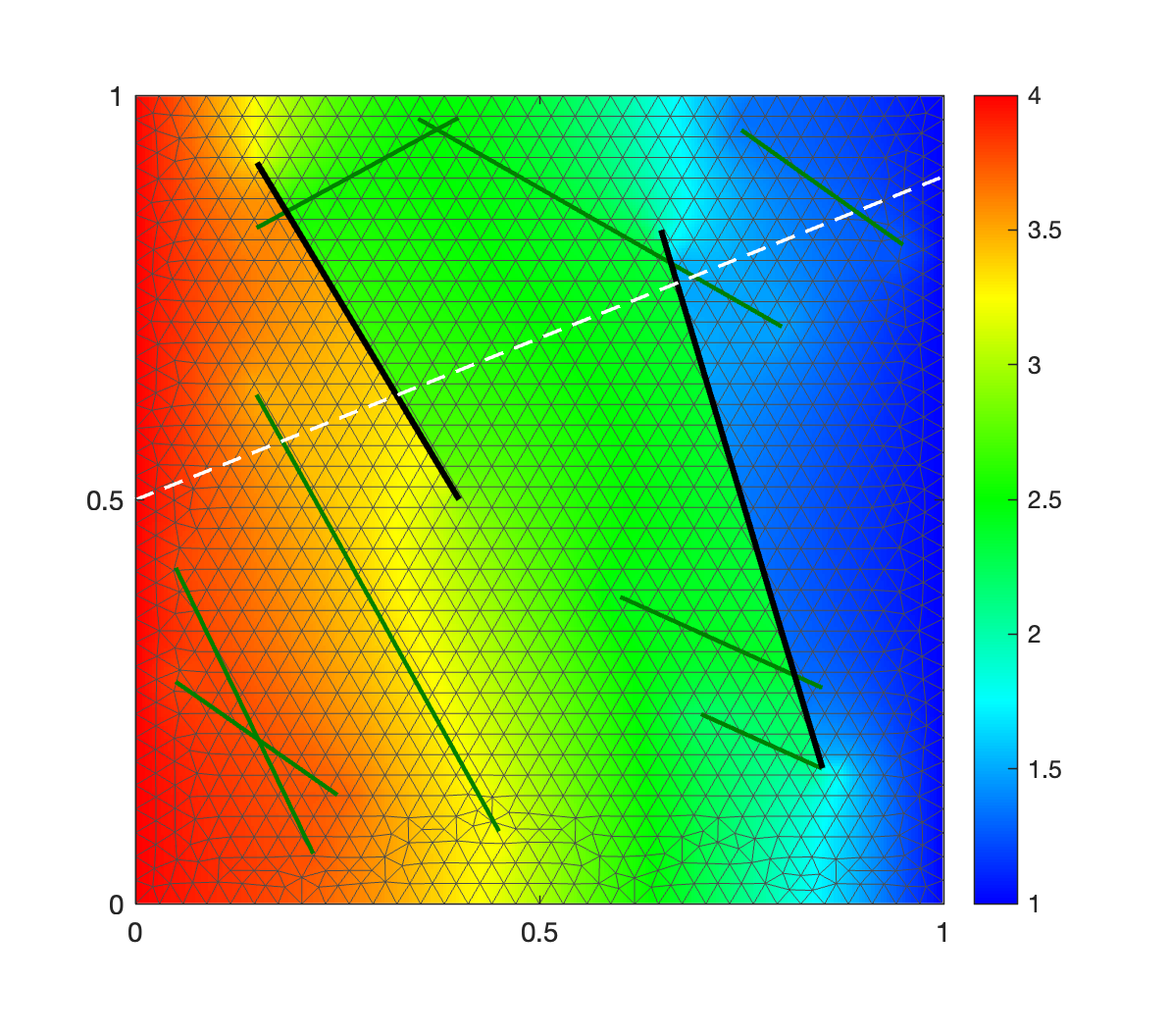}
  \caption{recovered broken $\widehat p_h$}\end{subfigure}\hfill
\begin{subfigure}[b]{0.34\textwidth}\includegraphics[width=\textwidth]{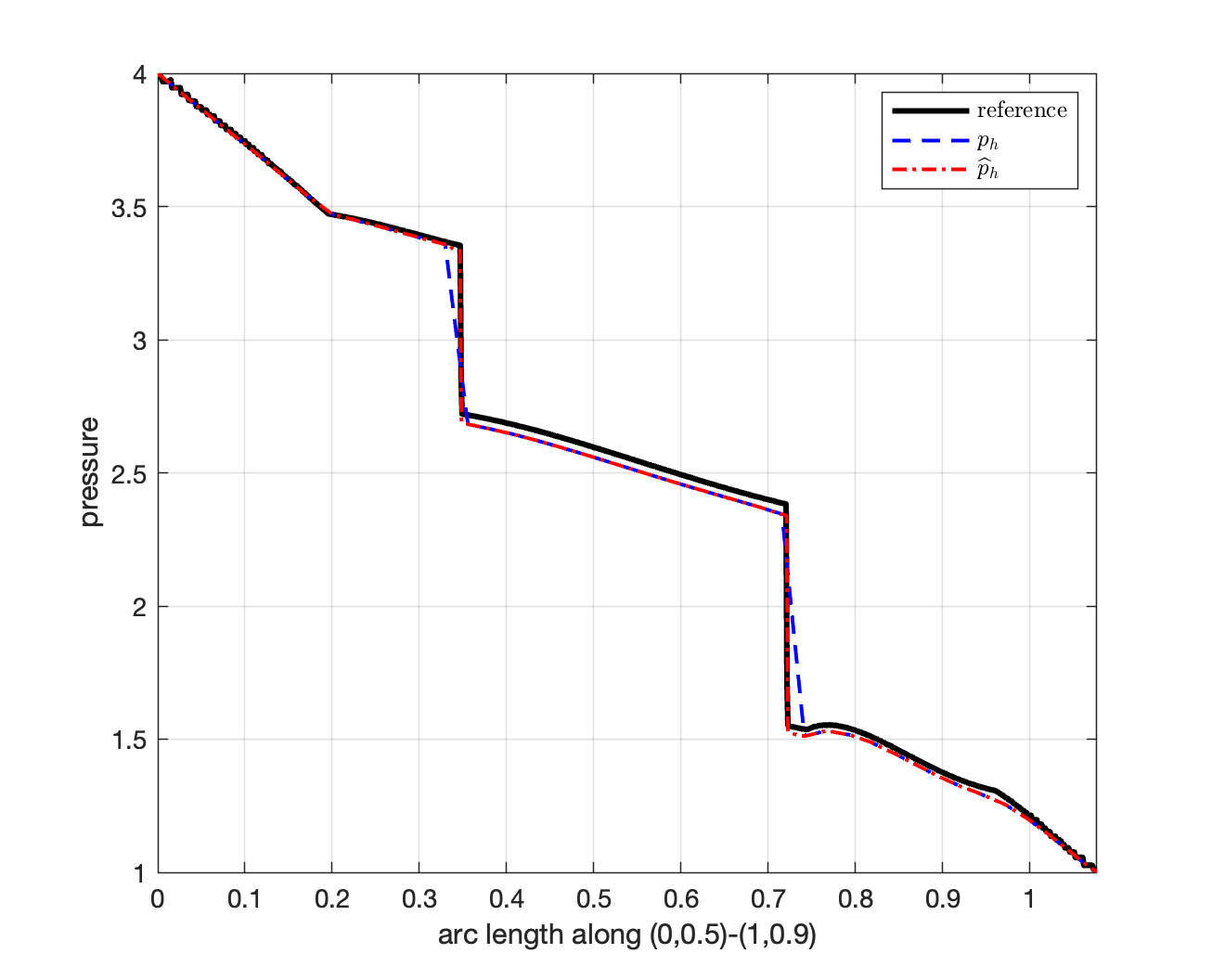}
  \caption{slice along $(0,0.5)$--$(1,0.9)$}\end{subfigure}
\caption{Example~4, left-to-right flow, on the same mesh: (a) continuous pressure, (b) recovered broken pressure, and (c) the profile along $(0,0.5)$--$(1,0.9)$ against the mimetic finite-difference reference of \cite{flemisch2018benchmarks}.
Fractures are shown in in green and barriers are in black.}
\label{fig:ex4-l2r}
\end{figure}

\subsection{Example 5: a realistic barrier network}\label{sec:ex5}

Our final $2$D example is a realistic network of $63$ low-permeability barriers. 
It was obtained in \cite{glaser2022comparison} by turning the conductive fracture network of \cite{flemisch2018benchmarks} into blocking barriers. 
On the rectangle $\Om=[0,700]\times[0,600]$ (in meters), a left-to-right flow is driven by the Dirichlet heads $p=1{,}013{,}250$ at $x=0$ and $p=0$ at $x=700$ (the heads are normalized to $1$ and $0$ below), with no-flow top and bottom. 
All barriers share the aperture $a=10^{-2}$ and permeability $k_b=10^{-18}$ in a matrix
of $k_m=10^{-14}$. 
Factoring out $k_m$ leaves an identity matrix permeability and a barrier resistance
$R=k_m a/k_b=100$. 
The barriers form a dense, repeatedly intersecting network, treated by the method of Section~\ref{sec:method}. 
The unstructured mesh has about
$10{,}150$ vertices and is fitted to none of the barriers.

Figure~\ref{fig:ex5} shows the two fields $p_h$ and $\widehat{p}_h$, and the profiles along the slices $(0,0)$--$(700,600)$ and $(625,0)$--$(625,600)$. 
The reference is a solution of the fitted Box-DFM of \cite{xu2024box} on a barrier-fitted grid of about $3\times10^5$ cells. 
On both slices the raw $p_h$ follows the overall descent of the head but flattens the individual jumps, while the recovery $\widehat{p}_h$ restores them.
The narrow peak that the vertical slice crosses near the outlet, clipped by $p_h$, is also recovered. 
A realistic blocking network is thus captured with no mesh adaptation to the barriers at all.

\begin{figure}[htbp!]\centering
\begin{subfigure}[b]{0.46\textwidth}\includegraphics[width=\textwidth]{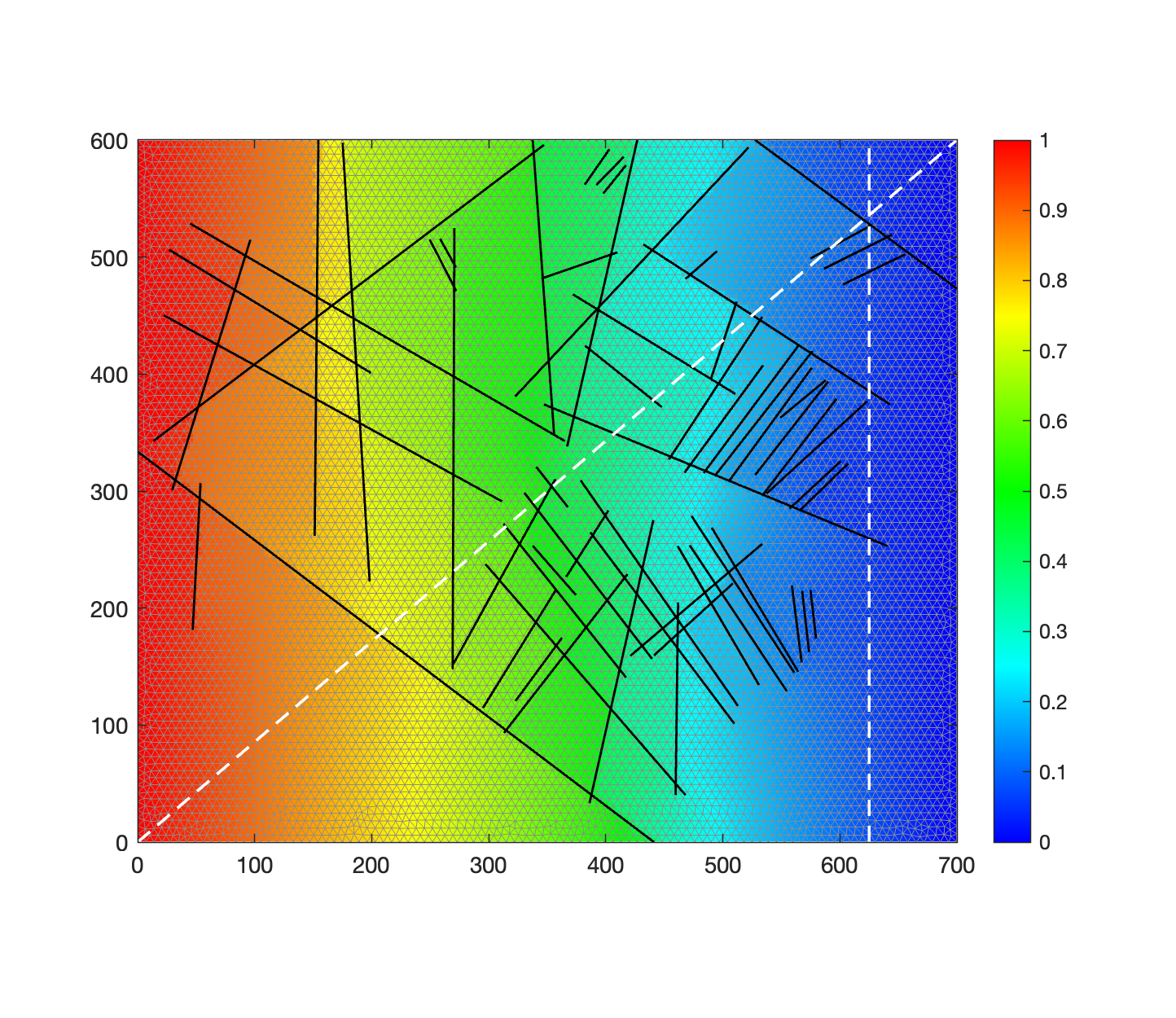}
  \caption{continuous $p_h$}\end{subfigure}\hfill
\begin{subfigure}[b]{0.46\textwidth}\includegraphics[width=\textwidth]{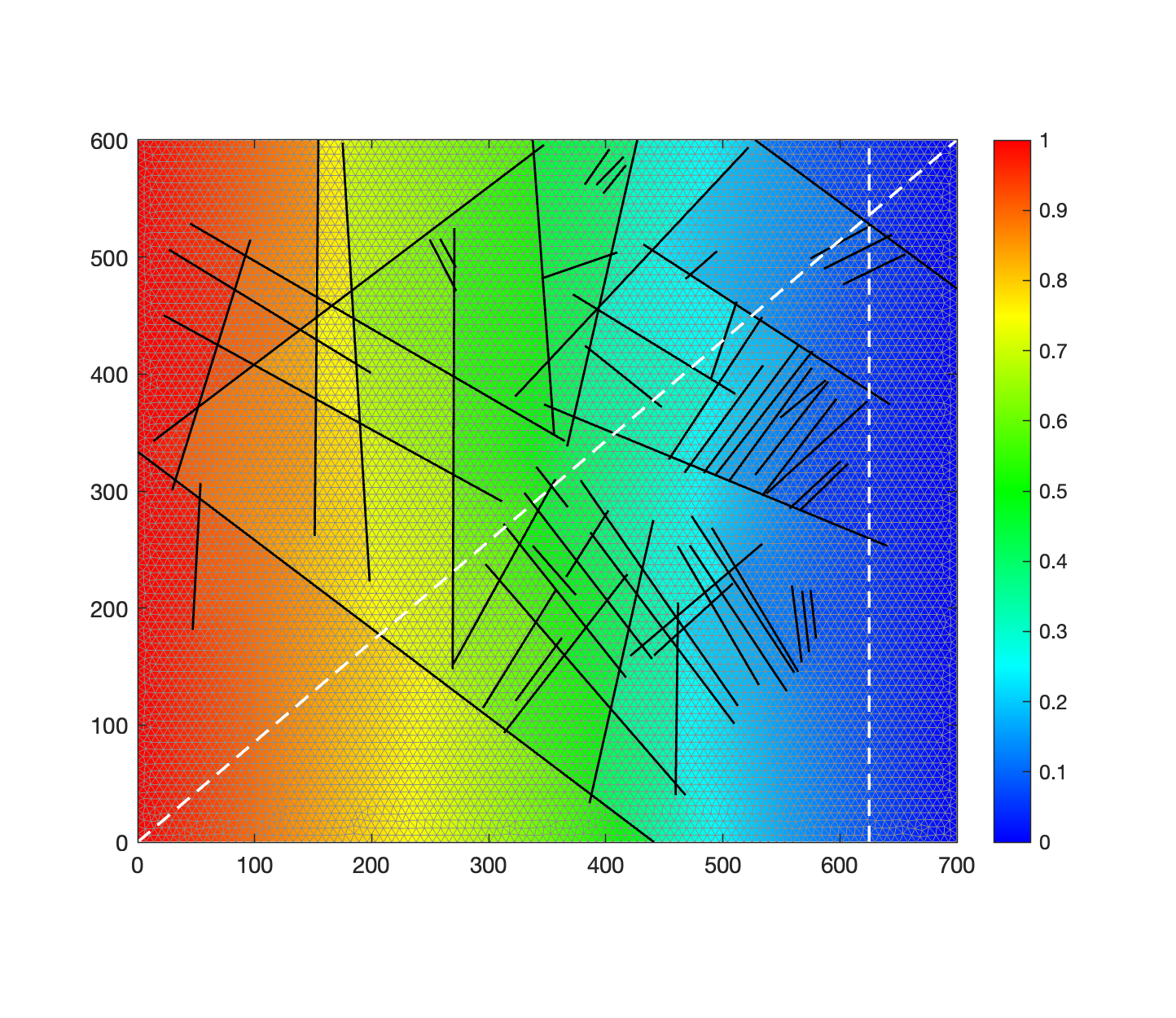}
  \caption{recovered broken $\widehat p_h$}\end{subfigure}

\begin{subfigure}[b]{0.46\textwidth}\includegraphics[width=\textwidth]{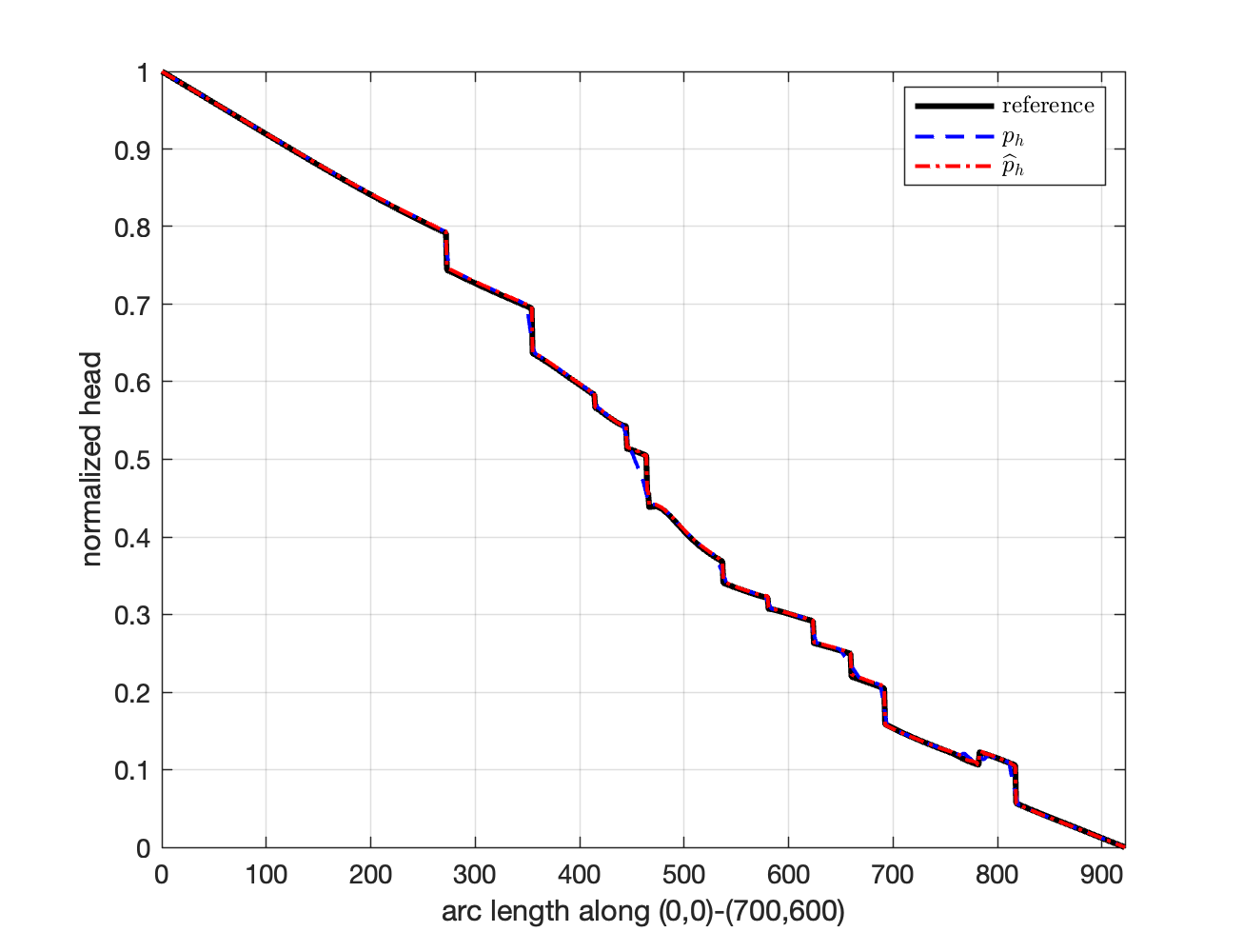}
  \caption{slice along $(0,0)$--$(700,600)$}\end{subfigure}\hfill
\begin{subfigure}[b]{0.46\textwidth}\includegraphics[width=\textwidth]{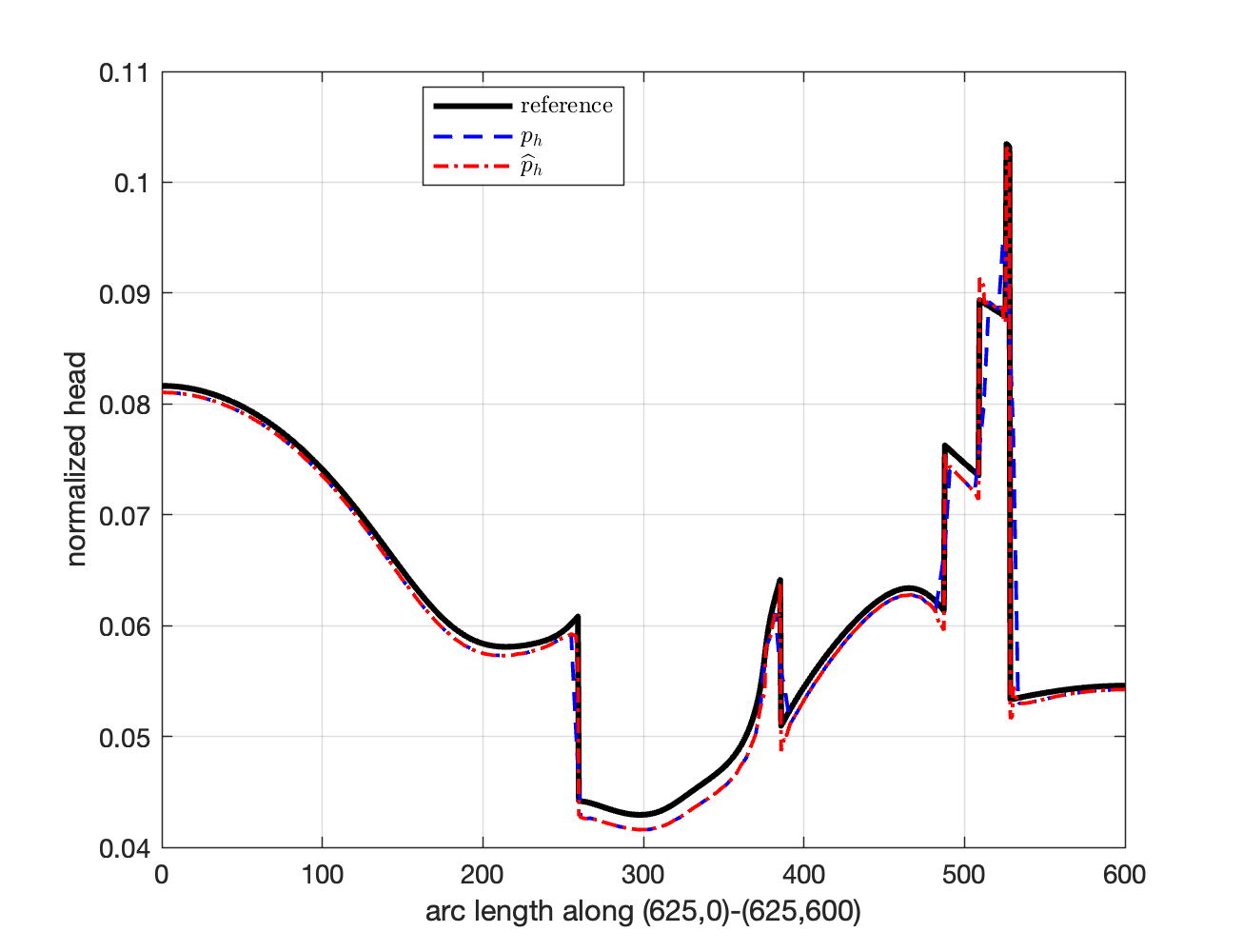}
  \caption{slice along $(625,0)$--$(625,600)$}\end{subfigure}
\caption{Example~5, realistic network of $63$ barriers: (a) continuous pressure, (b) recovered broken pressure, and the profiles along $(0,0)$--$(700,600)$ in (c) and $(625,0)$--$(625,600)$ in (d) against the fine Box-DFM reference of \cite{xu2024box}.}
\label{fig:ex5}
\end{figure}

\subsection{Example 6: a three-dimensional barrier network}\label{sec:ex6}

Our last example is three-dimensional. 
On the unit cube $\Om=[0,1]^3$, we take the regular blocking network of Case~2.2 of the benchmark study of Berre et al.~\cite{berre2021verification}: 
nine axis-oriented barrier planes, namely $x,y,z=\tfrac12$ (full planes), $x,y,z=\tfrac34$ (restricted to $[\tfrac12,1]^2$), and $x,y,z=\tfrac58$ (restricted to $[\tfrac12,\tfrac34]^2$), arranged so that triples of planes meet along the cube diagonal. 
Every barrier has aperture $a=10^{-4}$ and permeability $k_b=10^{-4}$, so its resistance is $R=a/k_b=1$. 
The matrix permeability is $\mathbf{K}_m=0.1\times\mathbf{I}$ in the subregion 
\begin{equation*}
\begin{split}
\Omega_{1}=&\{(x,y,z)\in\Omega: x>0.5, y<0.5\}\cup
\{(x,y,z)\in\Omega: x>0.75, 0.5<y<0.75, z>0.5\}\\
&\cup
\{(x,y,z)\in\Omega: 0.625<x<0.75, 0.5<y<0.625, 0.5<z<0.75\},
\end{split}
\end{equation*}
and is $\mathbf{K}_m=\mathbf{I}$ in the subregion $\Omega\setminus\Omega_{1}$. 
An inflow $\bu\cdot\bn=-1$ enters on the corner $\{x,y,z<\tfrac14\}$, the head is fixed to $p=1$ on $\{x,y,z>\tfrac78\}$, and the rest of the boundary is impermeable. 
We solve on a single unstructured tetrahedral mesh unfitted to any barrier. 
The barrier treatment follows Appendix~\ref{app:threed}, which is a direct extension of the methods in Section~\ref{sec:method}.

Figure~\ref{fig:ex6} shows pressures $p_h$ and $\widehat{p}_h$ on the cube surface and the head along the diagonal $(0,0,0)$--$(1,1,1)$. 
On the surface, the traces of the barrier planes are invisible in $p_h$ and
appear as sharp color breaks in $\widehat p_h$. 
Along the sampling line, the recovered head $\widehat{p}_h$ places all three jumps at the correct arc lengths with the correct magnitudes and stays close to the reference \cite{berre2021verification}, while the raw $p_h$ stretches each jump over a cell.

\begin{figure}[htbp!]\centering
\begin{subfigure}[b]{0.32\textwidth}\includegraphics[width=\textwidth]{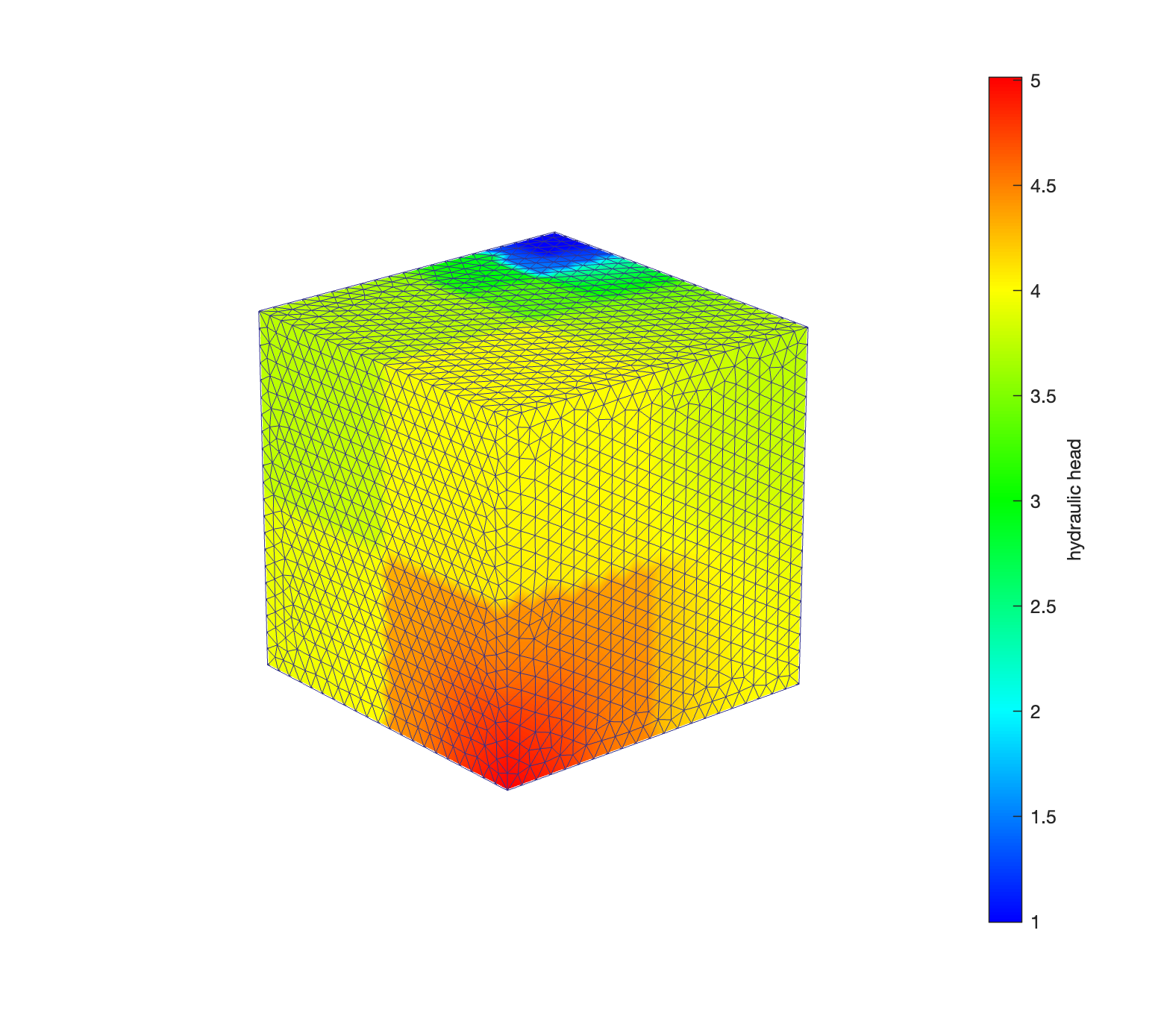}
  \caption{continuous $p_h$}\end{subfigure}\hfill
\begin{subfigure}[b]{0.32\textwidth}\includegraphics[width=\textwidth]{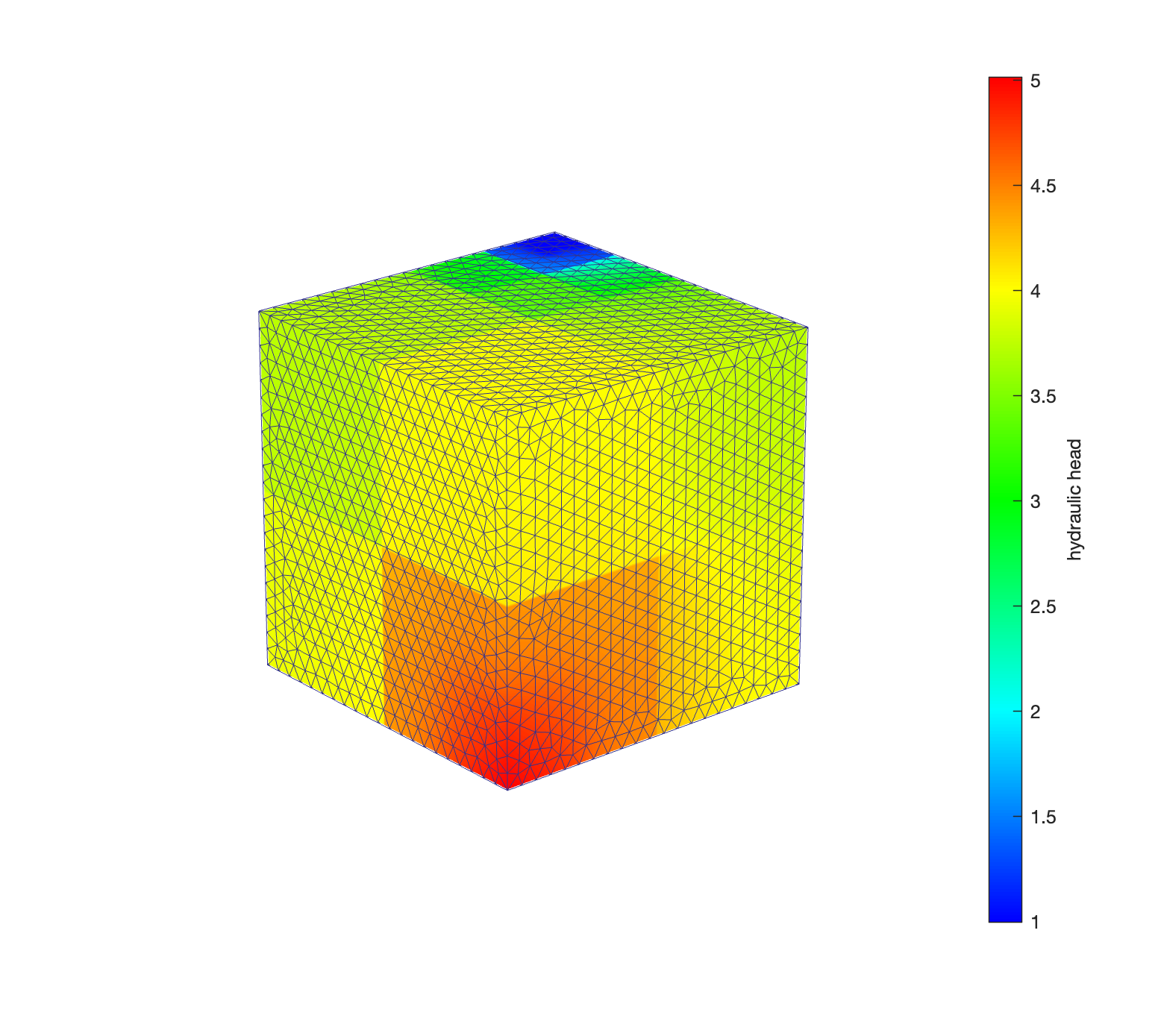}
  \caption{recovered broken $\widehat p_h$}\end{subfigure}\hfill
\begin{subfigure}[b]{0.34\textwidth}\includegraphics[width=\textwidth]{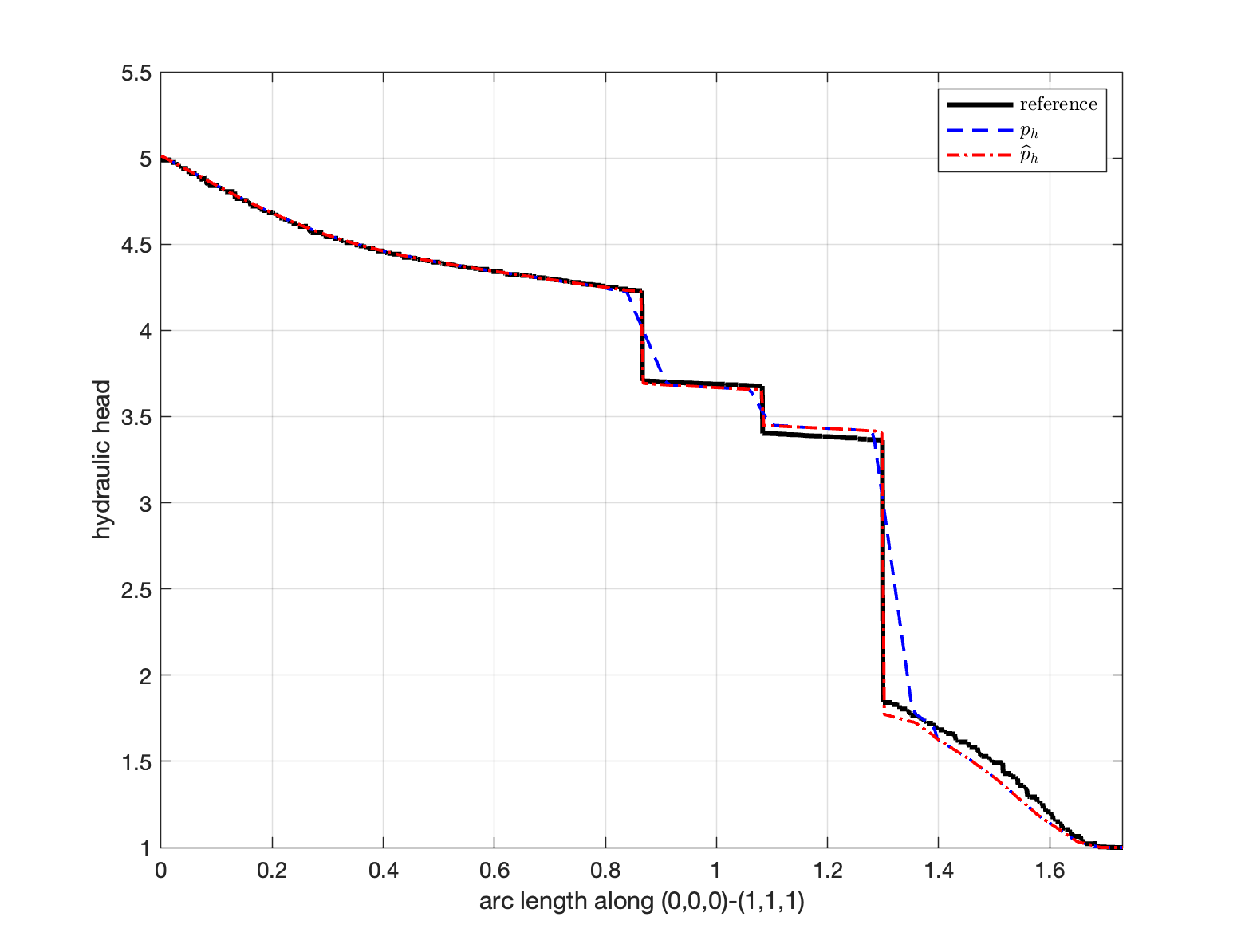}
  \caption{slice along $(0,0,0)$--$(1,1,1)$}\end{subfigure}
\caption{Example~6, nine-barrier network on an unfitted tetrahedral mesh: (a) continuous pressure 
and (b) recovered broken pressure on the cube surface, and (c) the head along the diagonal $(0,0,0)$--$(1,1,1)$ against the benchmark reference solution.}
\label{fig:ex6}
\end{figure}

\section{Conclusions}\label{sec:conclusion}

We have presented an extension of the linear finite element discrete fracture model to low-permeability barriers.
The method minimizes the broken energy of the interface model over the $H^1$-conforming space augmented, on each cut cell, by a local jump variable, which is then eliminated in closed form. 
What results is a local modification of the element stiffness matrices: the rank-one update \eqref{eq:cellupdate} on cells cut by a single chord, the modification \eqref{eq:cellupdate-junction} on cells containing multiple barriers, and the three-dimensional counterparts \eqref{eq:cellupdate3d} and \eqref{eq:cellupdate-junction3d}. 
The modification keeps the assembled system symmetric positive definite on any triangulation. 
The pressure jumps eliminated are recovered by local post-processing: the broken reconstruction removes the jump-smearing and raises the accuracy well above the jump-limited rate. 
Behind the formulas lies a simple physical picture, developed in Appendix~\ref{app:network}: the $P^1$ system is the nodal current balance of a resistor network, and a barrier enters by splitting a hidden local node and inserting a barrier resistor between the two copies.

A forthcoming work extends the same energy-condensation principle to a finite difference discretization on Cartesian grids \cite{xu2026fdm}.
Several directions remain open. 
We are pursuing an a priori error analysis for the recovered broken pressure $\widehat{p}_h$. 
A richer interface representation, with more than one jump unknown per cut element, may bring the recovered accuracy for general unfitted barriers to the optimal order. 
The same idea of local discrete operator modification will be extended to TPFA and MPFA discrete fracture models on unfitted meshes. 
Moreover, an analogous complementary-energy formulation for mixed finite element discretizations on unfitted meshes will also be investigated.

\begin{appendices}

\section{A resistor-network interpretation}\label{app:network}

The derivation in Section~\ref{sec:method} is variational.  In this appendix we
give a complementary circuit interpretation, whose purpose is to make the local
modification \eqref{eq:cellupdate} physically transparent. 
This was, in fact, the original starting point of the present work, and the method proposed here was first derived from this perspective.

The basic observation is that the $P^1$-finite element equations can be read as
Kirchhoff current balances on a resistor network: nodal values are voltages,
non-diagonal entries of the stiffness matrix are edge conductances, and the finite element left-hand side
at a node is the net current flowing out of that node. 
The perspective of equivalent circuit representations is classical \cite{kron1945numerical, macneal1953asymmetrical, duffin1959distributed}.
In particular, Duffin \cite{duffin1959distributed} identified
the $P^1$-finite element discretization of the Laplacian as a resistor network carrying the cotangent conductances.
From this point of view, a blocking barrier is naturally represented by inserting an additional barrier resistor.  
The cut-cell update \eqref{eq:cellupdate} is then exactly what results when the
vertex separated by the barrier is split into two copies, a barrier resistor is
inserted between the copies, and the cell-local copy is eliminated by Kirchhoff's
law.

\subsection*{A.1 \; The $P^1$-finite element equations as Kirchhoff balances}

To keep the discussion focused on the mechanism, we consider a typical interior node.  
Suppose first that a resistor network is built on the mesh graph; see Figure \ref{fig:mesh-circuit}.  
If $u_i$ denotes the voltage at node $i$, and if the edge $(i,j)$ has conductance $c_{ij}(=c_{ji})$, then Ohm's law gives the current from $i$ to $j$ as
\[
I_{ij}=c_{ij}(u_i-u_j).
\]
Kirchhoff's current balance at node $i$ is therefore
\[
\sum_{j\sim i} c_{ij}(u_i-u_j)=b_i,
\]
where $b_i$ denotes the current source assigned to the node $i$.

The $P^1$-finite element method for $-\nabla\cdot(\mathbf K_m\nabla p)=f$ has exactly this form.  Let $\{\phi_i\}$ be the nodal Lagrange basis and define
\[
K_{ij}
=
\sum_{T\in\mathcal T_h}
\int_T \mathbf K_m\nabla\phi_j\cdot\nabla\phi_i\,d\mathbf{x}.
\]
Since $\sum_j\phi_j=1$, the stiffness matrix has zero row sum,
\[
\sum_j K_{ij}=0.
\]
Thus, if we set $c_{ij}:=-K_{ij}$ for $i\ne j$, then
\[
K_{ii}=-\sum_{j\ne i}K_{ij}=\sum_{j\ne i}c_{ij}.
\]
Consequently,
\[
(Ku)_i
=
K_{ii}u_i+\sum_{j\ne i}K_{ij}u_j
=
\sum_{j\ne i}c_{ij}(u_i-u_j).
\]
The finite element equation $Ku=b$, with $b_i
=
\int_\Omega f\phi_i\,d\mathbf{x}$, is therefore exactly Kirchhoff's current balance at each node.  In this
identification, the finite element nodal values are voltages, the off-diagonal
stiffness entries define edge conductances $c_{ij}=-K_{ij}$, and the finite
element left-hand side $(Ku)_i$ is the net current flowing out of node $i$.

\begin{figure}[htbp!]\centering
\begin{tikzpicture}[scale=0.82,>=Latex]
\begin{scope}
  \draw[thick] (0,0)--(0.6,2); \draw[thick] (0,0)--(2.2,0.6); \draw[thick] (0,0)--(1.7,-1.5);
  \draw[thick] (0,0)--(-0.5,-1.8); \draw[thick] (0,0)--(-2.1,-0.8); \draw[thick] (0,0)--(-1.5,1.5);
  \draw[thick] (0.6,2)--(2.2,0.6); \draw[thick] (2.2,0.6)--(1.7,-1.5); \draw[thick] (1.7,-1.5)--(-0.5,-1.8);
  \draw[thick] (-0.5,-1.8)--(-2.1,-0.8); \draw[thick] (-2.1,-0.8)--(-1.5,1.5); \draw[thick] (-1.5,1.5)--(0.6,2);
  \draw[thick,path fading=fade down] (0.6,2)--(-0.1,2.7); \draw[thick,path fading=fade down] (0.6,2)--(0.9,2.8); \draw[thick,path fading=fade down] (0.6,2)--(1.5,2.2);
  \draw[thick,path fading=fade right] (2.2,0.6)--(2.6,1.5); \draw[thick,path fading=fade right] (2.2,0.6)--(3.3,-0.2);
  \draw[thick,path fading=fade right] (1.7,-1.5)--(3.0,-1.0); \draw[thick,path fading=fade right] (1.7,-1.5)--(2.9,-2.5); \draw[thick,path fading=fade up] (1.7,-1.5)--(1.3,-2.6);
  \draw[thick,path fading=fade up] (-0.5,-1.8)--(0.2,-2.7); \draw[thick,path fading=fade up] (-0.5,-1.8)--(-1.8,-2.5);
  \draw[thick,path fading=fade up] (-2.1,-0.8)--(-2.2,-2.0); \draw[thick,path fading=fade up] (-2.1,-0.8)--(-3.2,-1.0); \draw[thick,path fading=fade down] (-2.1,-0.8)--(-3.3,0.5);
  \draw[thick,path fading=fade left] (-1.5,1.5)--(-2.8,1.0); \draw[thick,path fading=fade left] (-1.5,1.5)--(-2.8,2.0); \draw[thick,path fading=fade down] (-1.5,1.5)--(-1.0,2.4);
  \foreach \P in {(0,0),(0.6,2),(2.2,0.6),(1.7,-1.5),(-0.5,-1.8),(-2.1,-0.8),(-1.5,1.5)}\fill \P circle(1.9pt);
  \node[font=\small] at (0.0,-3.5) {(a) FEM mesh};
\end{scope}
\begin{scope}[shift={(8,0)}]
  \draw[thick] (0,0) to[R] (0.6,2); \draw[thick] (0,0) to[R] (2.2,0.6); \draw[thick] (0,0) to[R] (1.7,-1.5);
  \draw[thick] (0,0) to[R] (-0.5,-1.8); \draw[thick] (0,0) to[R] (-2.1,-0.8); \draw[thick] (0,0) to[R] (-1.5,1.5);
  \draw[thick] (0.6,2) to[R] (2.2,0.6); \draw[thick] (2.2,0.6) to[R] (1.7,-1.5); \draw[thick] (1.7,-1.5) to[R] (-0.5,-1.8);
  \draw[thick] (-0.5,-1.8) to[R] (-2.1,-0.8); \draw[thick] (-2.1,-0.8) to[R] (-1.5,1.5); \draw[thick] (-1.5,1.5) to[R] (0.6,2);
  \draw[thick,path fading=fade down] (0.6,2) to[R] (-0.1,2.7); \draw[thick,path fading=fade down] (0.6,2) to[R] (0.9,2.8); \draw[thick,path fading=fade down] (0.6,2) to[R] (1.5,2.2);
  \draw[thick,path fading=fade right] (2.2,0.6) to[R] (2.6,1.5); \draw[thick,path fading=fade right] (2.2,0.6) to[R] (3.3,-0.2);
  \draw[thick,path fading=fade right] (1.7,-1.5) to[R] (3.0,-1.0); \draw[thick,path fading=fade right] (1.7,-1.5) to[R] (2.9,-2.5); \draw[thick,path fading=fade up] (1.7,-1.5) to[R] (1.3,-2.6);
  \draw[thick,path fading=fade up] (-0.5,-1.8) to[R] (0.2,-2.7); \draw[thick,path fading=fade up] (-0.5,-1.8) to[R] (-1.8,-2.5);
  \draw[thick,path fading=fade up] (-2.1,-0.8) to[R] (-2.2,-2.0); \draw[thick,path fading=fade up] (-2.1,-0.8) to[R] (-3.2,-1.0); \draw[thick,path fading=fade down] (-2.1,-0.8) to[R] (-3.3,0.5);
  \draw[thick,path fading=fade left] (-1.5,1.5) to[R] (-2.8,1.0); \draw[thick,path fading=fade left] (-1.5,1.5) to[R] (-2.8,2.0); \draw[thick,path fading=fade down] (-1.5,1.5) to[R] (-1.0,2.4);
  \foreach \P in {(0,0),(0.6,2),(2.2,0.6),(1.7,-1.5),(-0.5,-1.8),(-2.1,-0.8),(-1.5,1.5)}\fill \P circle(1.9pt);
  \node[font=\small] at (0.0,-3.5) {(b) equivalent circuit network};
\end{scope}
\end{tikzpicture}
\caption{The $P^1$-finite element system as a resistor network. 
The nodal values are the node voltages, and the finite element equation at a node is Kirchhoff's current balance.}
\label{fig:mesh-circuit}
\end{figure}

It is useful to distinguish the assembled edge conductance from its elemental
contributions.  If an interior edge $e=(i,j)$ is shared by two elements
$T^+$ and $T^-$, then the assembled stiffness entry is the sum of the two
local contributions,
\[
K_{ij}
=
(K_{T^+})_{ij}+(K_{T^-})_{ij}.
\]
Equivalently, if
\[
c_{ij}^T:=-(K_T)_{ij},
\]
denotes the conductance contribution supplied by element $T$, then the assembled
edge conductance is
\[
c_{ij}
=
c_{ij}^{T^+}+c_{ij}^{T^-}.
\]
The same additivity holds for the current:
\[
I_{ij}
=
c_{ij}(u_i-u_j)
=
c_{ij}^{T^+}(u_i-u_j)+c_{ij}^{T^-}(u_i-u_j).
\]
Thus a local stiffness matrix may be viewed as a local subnetwork whose branch
currents are assembled into the global circuit.  In particular, $(K_Tu_T)_i$
is the current contribution flowing out of node $i$ through the local branches
of element $T$, whereas $(Ku)_i$ is the total assembled current flowing out of
the global node $i$.  This local viewpoint is the one used below for cut cells.

\subsection*{A.2 \; The barrier as an inserted resistor}

We now apply the same circuit viewpoint at the level of a single cut element.
Let $T=(\alpha,\beta,\gamma)$ be a cut cell, and suppose that the barrier
separates the lone vertex $\gamma$ from the two vertices $\alpha$ and
$\beta$.  
The local stiffness matrix $K_T$ represents the original element subnetwork before
the barrier is inserted.  
In the notation of the previous subsection,
$(K_Tu_T)_i$ is the current contribution flowing out of node $i$ through the local branches
of element $T$.

The physical effect of a blocking barrier is that a flux crossing the barrier
requires an additional pressure drop.  In the interface law, this drop is
proportional to the cross-barrier flux, with resistance parameter $R$.  This is
precisely the role of a resistor in a circuit.  If the barrier cuts the element
$T$ along a chord of length $\ell_T$, then the barrier segment inside this
cell is represented by a barrier resistor with conductance
\[
C_T=\frac{\ell_T}{R},
\]
or equivalently with resistance $R/\ell_T$.

The corresponding circuit operation is a change of local topology.  The lone vertex
$\gamma$ is split into two copies,
\[
\gamma_{\rm in}
\qquad\text{and}\qquad
\gamma_{\rm out}.
\]
The copy $\gamma_{\rm out}$ is the retained global node, while
$\gamma_{\rm in}$ is a cell-local hidden copy.  The original element subnetwork
$K_T$ is attached to $\gamma_{\rm in}$, not directly to
$\gamma_{\rm out}$.  Thus the local connections formerly associated with
$\gamma$ are rewired as
\[
\alpha \leftrightarrow \gamma_{\rm in},
\qquad
\beta \leftrightarrow \gamma_{\rm in},
\]
and the barrier is represented by the additional resistor
\[
\gamma_{\rm in}
\leftrightarrow
\gamma_{\rm out}
\]
with conductance $C_T=\ell_T/R$.  In this enlarged local circuit,
$\gamma_{\rm out}$ is connected to the element subnetwork only through the
barrier resistor; see Figure~\ref{fig:barrier-circuit}.

\begin{figure}[htbp!]
\centering
\begin{tikzpicture}[scale=1.0,>=Latex]

\ctikzset{
  bipoles/length=0.55cm,
  resistors/zigs=4,
  resistors/thickness=0.8
}

\begin{scope}[shift={(0,0)}]

  \coordinate (a) at (0,0);
  \coordinate (b) at (4.2,0);
  \coordinate (g) at (2.2,3.5);

  \coordinate (x1) at ($(a)!0.62!(g)$);
  \coordinate (x2) at ($(b)!0.62!(g)$);

  \draw[gray!60,thick] (a)--(g)--(b)--cycle;

  \draw[blue,thick,dashed] ($(x1)!-0.28!(x2)$)--(x1);
  \draw[blue,very thick] (x1)--(x2);
  \draw[blue,thick,dashed] (x2)--($(x2)!-0.28!(x1)$);
  \node[blue,right] at ($(x2)!-0.28!(x1)+(0.08,0.05)$) {$\Gamma$};

  \draw[thick] (a) to[R] (b);
  \draw[thick] (a) to[R] (g);
  \draw[thick] (g) to[R] (b);

  \fill (a) circle(2.4pt);
  \fill (b) circle(2.4pt);
  \fill (g) circle(2.4pt);

  \node[below left] at (a) {$\alpha$};
  \node[below right] at (b) {$\beta$};
  \node[above] at (g) {$\gamma$};

  \node[font=\small] at (2.1,-0.78) {(a) original element subnetwork};

\end{scope}

\draw[->,very thick] (5.3,1.8) -- (7.0,1.8);
\node[align=center] at (6.15,2.45)
  {\small split $\gamma$ into $\gamma_{\rm in}$ and $\gamma_{\rm out}$\\[-1pt]
   \small insert barrier resistor};

\begin{scope}[shift={(8.2,0)}]

  \coordinate (a) at (0,0);
  \coordinate (b) at (4.2,0);
  \coordinate (gout) at (2.2,3.5);

  \coordinate (gin) at (2.2,1.7);

  \coordinate (x1) at ($(a)!0.62!(gout)$);
  \coordinate (x2) at ($(b)!0.62!(gout)$);

  \draw[gray!20,thick] (a)--(gout)--(b)--cycle;

  \draw[blue,thick,dashed] ($(x1)!-0.28!(x2)$)--(x1);
  \draw[blue,very thick] (x1)--(x2);
  \draw[blue,thick,dashed] (x2)--($(x2)!-0.28!(x1)$);
  \node[blue,right] at ($(x2)!-0.28!(x1)+(0.08,0.05)$) {$\Gamma$};

  \draw[thick] (a) to[R] (b);
  \draw[thick] (a) to[R] (gin);
  \draw[thick] (gin) to[R] (b);

  \draw[red,line width=1.1pt] (gin) to[R] (gout);

  \fill (a) circle(2.4pt);
  \fill (b) circle(2.4pt);
  \fill[red] (gin) circle(2.6pt);
  \fill (gout) circle(2.4pt);

  \node[below left] at (a) {$\alpha$};
  \node[below right] at (b) {$\beta$};
  \node[above] at (gout) {$\gamma_{\rm out}$};
  \node[red,left] at ($(gin)+(-0.18,-0.05)$) {$\gamma_{\rm in}$};

  \node[red,right] at ($(gin)!0.58!(gout)+(0.22,0.05)$) {$\ell_T/R$};

  \node[font=\small] at (2.15,-0.78) {(b) rewired cut-cell subnetwork};

\end{scope}

\end{tikzpicture}
\caption{Local circuit interpretation of a cut cell. 
(a) The original element subnetwork associated with the local stiffness matrix
$K_T$ on $(\alpha,\beta,\gamma)$. 
(b) After the barrier is introduced, the lone vertex $\gamma$ is split into a
retained copy $\gamma_{\rm out}$ and a cell-local hidden copy
$\gamma_{\rm in}$. The original element subnetwork is attached to
$\gamma_{\rm in}$, and the barrier segment inside the cell is represented by a
barrier resistor of conductance $\ell_T/R$ connecting $\gamma_{\rm in}$ to
$\gamma_{\rm out}$.}
\label{fig:barrier-circuit}
\end{figure}

Since $\gamma_{\rm in}$ is an internal hidden node and carries no source, the
net current flowing out of this node must vanish.  The current flowing from
$\gamma_{\rm in}$ through the original element subnetwork is given by the
$\gamma$-row of $K_T$, evaluated with the hidden value
$u_{\gamma_{\rm in}}$:
\[
\bigl(K_T
(u_\alpha,u_\beta,u_{\gamma_{\rm in}})^{\transp}\bigr)_\gamma .
\]
The current flowing from $\gamma_{\rm in}$ through the barrier resistor to
$\gamma_{\rm out}$ is, by Ohm's law,
\[
\frac{\ell_T}{R}\,
(u_{\gamma_{\rm in}}-u_{\gamma_{\rm out}}).
\]
Kirchhoff's current balance at the hidden node therefore gives
\[
\bigl(K_T
(u_\alpha,u_\beta,u_{\gamma_{\rm in}})^{\transp}\bigr)_\gamma
+
\frac{\ell_T}{R}\,
(u_{\gamma_{\rm in}}-u_{\gamma_{\rm out}})
=0.
\]
This is the circuit form of the local stationarity condition used in the
variational derivation.

The hidden copy $\gamma_{\rm in}$ is eliminated locally by solving this Kirchhoff balance and substituting the result back into the retained-node equations.  
In circuit terminology, this elimination is called Kron reduction.
The result is an effective local circuit, or equivalently an
effective local stiffness matrix, on the retained nodes
\[
(u_\alpha,u_\beta,u_{\gamma_{\rm out}}).
\]

To identify this effective stiffness matrix from the circuit, solve for
$u_{\gamma_{\rm in}}$ from the Kirchhoff current balance at the hidden node:
\[
u_{\gamma_{\rm in}}
=
\frac{
\dfrac{\ell_T}{R}u_{\gamma_{\rm out}}
-
(K_T)_{\gamma\alpha}u_\alpha
-
(K_T)_{\gamma\beta}u_\beta
}
{
(K_T)_{\gamma\gamma}+\dfrac{\ell_T}{R}
}.
\]
For retained voltages
$(u_\alpha,u_\beta,u_{\gamma_{\rm out}})$, let
$I_\alpha$, $I_\beta$, and $I_{\gamma_{\rm out}}$ denote the net currents
flowing out of the retained nodes through this modified local subnetwork.  These
currents are the components of the effective local stiffness action.

At $\alpha$ and $\beta$, the currents are supplied by the original element
subnetwork:
\[
I_\alpha
=
(K_T)_{\alpha\alpha}u_\alpha
+
(K_T)_{\alpha\beta}u_\beta
+
(K_T)_{\alpha\gamma}u_{\gamma_{\rm in}},
\]
\[
I_\beta
=
(K_T)_{\beta\alpha}u_\alpha
+
(K_T)_{\beta\beta}u_\beta
+
(K_T)_{\beta\gamma}u_{\gamma_{\rm in}}.
\]
At the retained node $\gamma_{\rm out}$, the only local connection is the
barrier resistor, so Ohm's law gives
\[
I_{\gamma_{\rm out}}
=
\frac{\ell_T}{R}
\left(
u_{\gamma_{\rm out}}-u_{\gamma_{\rm in}}
\right).
\]
Substituting the expression for $u_{\gamma_{\rm in}}$ expresses these three
outgoing currents linearly in the retained voltages:
\[
\begin{pmatrix}
I_\alpha\\
I_\beta\\
I_{\gamma_{\rm out}}
\end{pmatrix}
=
\widetilde{K}_T
\begin{pmatrix}
u_\alpha\\
u_\beta\\
u_{\gamma_{\rm out}}
\end{pmatrix}.
\]
Reading off the coefficients gives
\[
\widetilde{K}_T
=
K_T
-
\frac{(K_T \mathbf{e}_\gamma) (K_T \mathbf{e}_\gamma)^{\transp}}
{(K_T)_{\gamma\gamma}+\ell_T/R}.
\]
This is exactly the cut-cell update
\eqref{eq:cellupdate}.

The operation is local in the same sense as finite element assembly.  
Each element contributes its own current branches before these contributions are assembled into global edge conductances.  
Thus each cut element introduces its own cell-local hidden node and its own barrier resistor.
Since these hidden nodes are not shared between elements, eliminating them in the global circuit is the same as eliminating them locally.

The circuit picture also gives the correct limiting behavior.  
If $R\to0$, then $\ell_T/R\to\infty$, so the barrier resistor becomes a short circuit and
$u_{\gamma_{\rm in}}$ is forced to coincide with $u_{\gamma_{\rm out}}$.  
The ordinary $P^1$ element is recovered.  If $R\to\infty$, then $\ell_T/R\to0$, so the barrier resistor becomes an open circuit and
$\gamma_{\rm out}$ is disconnected from the element subnetwork across the barrier.  
This is the fully blocking limit.

\section{Extension to three dimensions}\label{app:threed}

The construction of Section~\ref{sec:method} was carried out in two dimensions, but an extension to $\mathbb{R}^3$ is straightforward.
The barrier is now a codimension-one
\emph{surface} $\Gm\subset\Om\subset\mathbb R^3$ and the interface model of Section~\ref{sec:model} and its broken energy functional are unchanged.
Consider an unfitted tetrahedral triangulation
$\Th$ of $\Omega$. 
We show below that the chord of a cut triangle
is simply replaced by the cut face of a cut tetrahedron, and the chord length $\ell_T$
by the cut-face area $A_T$, after which the enriched element energy, its condensation to a rank-one
(or, at junctions, low-rank) stiffness update, the stability, and the broken-pressure recovery all
repeat line for line.

\subsection*{B.1 \; Cut geometry of a tetrahedron}

Call a tetrahedron $T$ \emph{cut} if the barrier plane crosses its interior. 
There are two cases (Figure~\ref{fig:cut3d}). In a $1$--$3$ split, one vertex lies on one side and the other
three on the other and $\Gm\cap T$
is a triangle. 
In a $2$--$2$ split, the vertices divide two against two and $\Gm\cap T$ is a quadrilateral. 
In either case $\Gm\cap T$ is a single planar polygon, whose area we denote $A_T=|\Gm\cap T|$.

To record which side of the plane a vertex lies on, let $\mathbf v\in\{0,1\}^4$ be the indicator of
the vertices on one chosen side. For a $1$--$3$ split $\mathbf v$ has a single unit entry, at the
lone vertex, and is then exactly the lone-vertex indicator $\egamma$ of Section~\ref{sec:method}. 
For a $2$--$2$ split $\mathbf v$ has two unit entries. The choice of side is arbitrary and is fixed once per cell.

\begin{figure}[htbp!]\centering
\begin{subfigure}[b]{0.46\textwidth}\centering
\begin{tikzpicture}[scale=0.95,>=Latex]
  \coordinate (p1) at (0,0); \coordinate (p2) at (3.4,0);
  \coordinate (p3) at (1.15,0.95); \coordinate (p4) at (1.75,2.9);
  \draw[thick,dashed] (p1)--(p3); \draw[thick,dashed] (p2)--(p3); \draw[thick,dashed] (p3)--(p4);
  \draw[thick] (p1)--(p2)--(p4)--cycle;
  \coordinate (q1) at ($(p4)!0.55!(p1)$);
  \coordinate (q2) at ($(p4)!0.55!(p2)$);
  \coordinate (q3) at ($(p4)!0.55!(p3)$);
  \fill[blue!14] (q1)--(q2)--(q3)--cycle;
  \draw[blue,very thick] (q1)--(q2); \draw[blue,very thick,dashed] (q2)--(q3)--(q1);
  \foreach \p in {p1,p2,p3,p4}\fill (\p) circle(1.7pt);
  \node[blue] at ($(q1)!0.5!(q2)+(0.15,0.26)$){$A_T$};
\end{tikzpicture}
\caption{$1$--$3$ split: a triangle.}
\end{subfigure}\hfill
\begin{subfigure}[b]{0.46\textwidth}\centering
\begin{tikzpicture}[scale=0.95,>=Latex]
  \coordinate (p1) at (0,0); \coordinate (p2) at (3.4,0);
  \coordinate (p3) at (1.15,0.95); \coordinate (p4) at (1.75,2.9);
  \draw[thick,dashed] (p1)--(p3); \draw[thick,dashed] (p2)--(p3); \draw[thick,dashed] (p3)--(p4);
  \draw[thick] (p1)--(p2)--(p4)--cycle;
  \coordinate (r1) at ($(p4)!0.5!(p1)$);
  \coordinate (r2) at ($(p4)!0.5!(p2)$);
  \coordinate (r3) at ($(p3)!0.5!(p2)$);
  \coordinate (r4) at ($(p3)!0.5!(p1)$);
  \fill[blue!14] (r1)--(r2)--(r3)--(r4)--cycle;
  \draw[blue,very thick] (r1)--(r2); \draw[blue,very thick,dashed] (r2)--(r3)--(r4)--(r1);
  \foreach \p in {p1,p2,p4}\fill (\p) circle(1.7pt);
  \node[blue] at (1.75,1.15){$A_T$};
\end{tikzpicture}
\caption{$2$--$2$ split: a quadrilateral.}
\end{subfigure}
\caption{The two ways a plane cuts a tetrahedron. The cut face $\Gm\cap T$ (blue area $A_T$) replaces the chord of the two-dimensional cut cell.}
\label{fig:cut3d}
\end{figure}

\subsection*{B.2 \; The local stiffness matrix modification and its properties}

On a cut cell $T$ we introduce one scalar auxiliary unknown $t_T$, which is the pressure drop across the cut face $\Gm\cap T$. 
The bulk part of the energy must
act on the field after this drop has been removed from the chosen side, so the jump-corrected
nodal vector representing the bulk field is
\[
\uT^{\rm bulk}=\uT-t_T\mathbf v,\qquad \uT=(u_1,u_2,u_3,u_4)^{\transp},
\]
and the local energy functional of the cut cell is
\begin{equation}\label{eq:ET3d}
\mathcal E_T(\uT,t_T)=\tfrac12(\uT-t_T\mathbf v)^{\transp}K_T(\uT-t_T\mathbf v)+\frac{A_T}{2R}\,t_T^2 .
\end{equation}
The first term is the tetrahedral bulk energy of the jump-removed field and the second term is the one-point face quadrature of the Robin jump energy $\frac1{2R}\int_{\Gm\cap T}\jump{q}^2\,dA$.

As in the plane, $t_T$ is local to one cell and is eliminated by minimizing $\mathcal E_T(\mathbf{u}_{T}, t_{T})$ over it at fixed $\uT$. 
Stationarity reads
\begin{equation*}\label{eq:stationarity3d}
0=\frac{\partial\mathcal E_T}{\partial t_T}=-\mathbf v^{\transp}K_T(\uT-t_T\mathbf v)+\frac{A_T}{R}\,t_T ,
\end{equation*}
whose solution is
\begin{equation}\label{eq:tstar3d}
t_T^\star=\frac{\mathbf v^{\transp}K_T\uT}{\mathbf v^{\transp}K_T\mathbf v+A_T/R} .
\end{equation}
Substituting \eqref{eq:tstar3d} into \eqref{eq:ET3d} condenses the cell energy to a quadratic form in
the retained nodal unknowns,
\[
\min_{t_T}\mathcal E_T(\uT,t_T)=\tfrac12\,\uT^{\transp}\widetilde K_T\,\uT,
\]
with the modified local stiffness matrix on cut-cell,
\begin{equation}\label{eq:cellupdate3d}
\widetilde K_T=K_T-\frac{(K_T\mathbf v)(K_T\mathbf v)^{\transp}}{\mathbf v^{\transp}K_T\mathbf v+A_T/R} .
\end{equation}
This is the direct analog to the two-dimensional update \eqref{eq:cellupdate}.

The properties established in two dimensions carry over unchanged. 
The matrix $\widetilde K_T$ is symmetric; it has zero row sums, since $K_T\mathbf
1=0$ gives $\widetilde K_T\mathbf 1=0$; and it is positive semidefinite with kernel exactly the multiples of $\mathbf{1}$. 
The chosen side can be arbitrary as replacing $\mathbf v$ by $\mathbf
1-\mathbf v$ leaves $\widetilde K_T$ unchanged.

\subsection*{B.3 \; Junctions and recovery}

We now consider the case where several barrier surfaces meet inside one tetrahedron. 
Let the planes cut $T$ into subregions $\mathsf R_0,\dots,\mathsf R_m$, with $\mathsf R_0$ a chosen base, and attach a scalar offset $t_\rho$ to each non-base region, collected in $\mathbf t_T\in\mathbb R^m$. 
Moreover, index the pieces of cut face inside $T$ by $f$, each separating regions $\mathsf R_{\rho_f^-}$ and $\mathsf R_{\rho_f^+}$, and set $\mathbf d_f=\mathbf e_{\rho_f^+}-\mathbf e_{\rho_f^-}$ (with $\mathbf e_0=\mathbf 0$ and $\mathbf e_r$ the $r$-th coordinate vector of $\mathbb R^m$).

The bulk field is the pressure field after the offsets are removed, i.e., $\uT-E_T\mathbf t_T$, where the region incidence matrix $E_T\in\mathbb R^{4\times m}$ has a unit in row $v$, column $\rho$ when vertex $v$ lies in region $\mathsf R_\rho$. 
Two subregions that share a piece $f$ of cut face
are coupled by $\frac1{2R}\int\jump{u}^2\,dA$ over that piece.
Summing these contributions gives $\tfrac12\mathbf t_T^{\transp}\mathcal P_T\mathbf t_T$, with $\mathcal P_T=\sum_f\frac{A_f}{R_f}\,\mathbf d_f\mathbf d_f^{\transp}$, where $A_f$ is the area of piece $f$ and $R_f$ the resistance of the plane.

The cell energy is thus
\begin{equation}\label{eq:ETjunction3d}
\mathcal E_T(\uT,\mathbf t_T)=\tfrac12(\uT-E_T\mathbf t_T)^{\transp}K_T(\uT-E_T\mathbf t_T)
+\tfrac12\mathbf t_T^{\transp}\mathcal P_T\mathbf t_T ,
\end{equation}
and minimizing over $\mathbf t_T$ gives $\mathbf t_T^{\ast}=(E_T^{\transp}K_TE_T+\mathcal P_T)^{-1}E_T^{\transp}K_T\uT$ and the modified local stiffness matrix
\begin{equation}\label{eq:cellupdate-junction3d}
\widetilde K_T=K_T-K_TE_T\bigl(E_T^{\transp}K_TE_T+\mathcal P_T\bigr)^{-1}E_T^{\transp}K_T.
\end{equation} 
A single plane ($m=1$) has $E_T=\mathbf v$ and $\mathcal P_T=A_T/R$,
and \eqref{eq:cellupdate-junction3d} collapses to the rank-one modification \eqref{eq:cellupdate3d}.

The broken pressure of Section~\ref{sec:recovery} is recovered cellwise exactly as in the two dimensions.
Having solved the global system for the nodal field, the offsets are restored by back-substitution, and the broken value at a point $x$ in region $\mathsf R_\rho$ is the jump-corrected pressure field plus that region's offset.

\end{appendices}

\bibliographystyle{plain}
\bibliography{ref}

\begin{thebibliography}{10}

\bibitem{angot2009asymptotic}
Philippe Angot, Franck Boyer, and Florence Hubert.
\newblock Asymptotic and numerical modelling of flows in fractured porous
  media.
\newblock {\em ESAIM: Mathematical Modelling and Numerical Analysis},
  43(2):239--275, 2009.

\bibitem{baca1984modelling}
RG~Baca, RC~Arnett, and DW~Langford.
\newblock Modelling fluid flow in fractured-porous rock masses by
  finite-element techniques.
\newblock {\em International Journal for Numerical Methods in Fluids},
  4(4):337--348, 1984.

\bibitem{berre2021verification}
Inga Berre, Wietse~M Boon, Bernd Flemisch, Alessio Fumagalli, Dennis
  Gl{\"a}ser, Eirik Keilegavlen, Anna Scotti, Ivar Stefansson, Alexandru
  Tatomir, Konstantin Brenner, et~al.
\newblock Verification benchmarks for single-phase flow in three-dimensional
  fractured porous media.
\newblock {\em Advances in Water Resources}, 147:103759, 2021.

\bibitem{boon2018robust}
Wietse~M Boon, Jan~M Nordbotten, and Ivan Yotov.
\newblock Robust discretization of flow in fractured porous media.
\newblock {\em SIAM Journal on Numerical Analysis}, 56(4):2203--2233, 2018.

\bibitem{cervera2022comparative}
Miguel Cervera, GB~Barbat, Michele Chiumenti, and J-Y Wu.
\newblock A comparative review of {XFEM}, mixed {FEM} and phase-field models
  for quasi-brittle cracking.
\newblock {\em Archives of Computational Methods in Engineering},
  29(2):1009--1083, 2022.

\bibitem{duffin1959distributed}
Richard~J Duffin.
\newblock Distributed and lumped networks.
\newblock {\em Journal of Mathematics and Mechanics}, 8(5):793--826, 1959.

\bibitem{d2012mixed}
Carlo D’Angelo and Anna Scotti.
\newblock A mixed finite element method for {D}arcy flow in fractured porous
  media with non-matching grids.
\newblock {\em ESAIM: Mathematical Modelling and Numerical Analysis},
  46(2):465--489, 2012.

\bibitem{favino2020fully}
Marco Favino, J{\"u}rg Hunziker, Eva Caspari, Beatriz Quintal, Klaus Holliger,
  and Rolf Krause.
\newblock Fully-automated adaptive mesh refinement for media embedding complex
  heterogeneities: application to poroelastic fluid pressure diffusion.
\newblock {\em Computational Geosciences}, 24(3):1101--1120, 2020.

\bibitem{flemisch2018benchmarks}
Bernd Flemisch, Inga Berre, Wietse Boon, Alessio Fumagalli, Nicolas Schwenck,
  Anna Scotti, Ivar Stefansson, and Alexandru Tatomir.
\newblock Benchmarks for single-phase flow in fractured porous media.
\newblock {\em Advances in Water Resources}, 111:239--258, 2018.

\bibitem{frih2012modeling}
Najla Frih, Vincent Martin, Jean~Elizabeth Roberts, and Ali Sa{\^a}da.
\newblock Modeling fractures as interfaces with nonmatching grids.
\newblock {\em Computational Geosciences}, 16(4):1043--1060, 2012.

\bibitem{fu2023hybridizable}
Guosheng Fu and Yang Yang.
\newblock A hybridizable discontinuous {G}alerkin method on unfitted meshes for
  single-phase {Darcy} flow in fractured porous media.
\newblock {\em Advances in Water Resources}, 173:104390, 2023.

\bibitem{fumagalli2013numerical}
Alessio Fumagalli and Anna Scotti.
\newblock A numerical method for two-phase flow in fractured porous media with
  non-matching grids.
\newblock {\em Advances in Water Resources}, 62:454--464, 2013.

\bibitem{glaser2022comparison}
Dennis Gl{\"a}ser, Martin Schneider, Bernd Flemisch, and Rainer Helmig.
\newblock Comparison of cell-and vertex-centered finite-volume schemes for flow
  in fractured porous media.
\newblock {\em Journal of Computational Physics}, 448:110715, 2022.

\bibitem{jiao2024enriched}
Kaituo Jiao, Dongxu Han, Yujie Chen, Bofeng Bai, Bo~Yu, and Shurong Wang.
\newblock The enriched-embedded discrete fracture model ({nEDFM}) for fluid
  flow in fractured porous media.
\newblock {\em Advances in Water Resources}, 184:104610, 2024.

\bibitem{karimi2004efficient}
Mohammad Karimi-Fard, Luis~J Durlofsky, and Khalid Aziz.
\newblock An efficient discrete-fracture model applicable for general-purpose
  reservoir simulators.
\newblock {\em SPE Journal}, 9(02):227--236, 2004.

\bibitem{karimi2003numerical}
Mohammad Karimi-Fard and Abbas Firoozabadi.
\newblock Numerical simulation of water injection in fractured media using the
  discrete-fracture model and the {G}alerkin method.
\newblock {\em SPE Reservoir Evaluation \& Engineering}, 6(02):117--126, 2003.

\bibitem{kim2000finite}
Jong-Gyun Kim and Milind~D Deo.
\newblock Finite element, discrete-fracture model for multiphase flow in porous
  media.
\newblock {\em AIChE Journal}, 46(6):1120--1130, 2000.

\bibitem{koppel2019lagrange}
Markus K{\"o}ppel, Vincent Martin, J{\'e}r{\^o}me Jaffr{\'e}, and Jean~E
  Roberts.
\newblock A {L}agrange multiplier method for a discrete fracture model for flow
  in porous media.
\newblock {\em Computational Geosciences}, 23(2):239--253, 2019.

\bibitem{kron1945numerical}
Gabriel Kron.
\newblock Numerical solution of ordinary and partial differential equations by
  means of equivalent circuits.
\newblock {\em Journal of Applied Physics}, 16(3):172--186, 1945.

\bibitem{li2008efficient}
Liyong Li and Seong~H Lee.
\newblock Efficient field-scale simulation of black oil in a naturally
  fractured reservoir through discrete fracture networks and homogenized media.
\newblock {\em SPE Reservoir evaluation \& engineering}, 11(04):750--758, 2008.

\bibitem{li2022high}
Xujing Li, Xiaodi Zhang, and Xinxin Zhou.
\newblock High order interface-penalty finite element methods for elliptic
  interface problems with {Robin} jump conditions.
\newblock {\em Computer Methods in Applied Mechanics and Engineering},
  390:114505, 2022.

\bibitem{liu2026high}
Jingyao Liu, Hui Guo, Zhuozheng Chen, and Ziyao Xu.
\newblock A high-order finite volume discrete fracture model for single-phase
  flow in fractured porous media.
\newblock {\em Advances in Water Resources}, page 105425, 2026.

\bibitem{liu2026interior}
Yong Liu and Ziyao Xu.
\newblock An interior penalty discontinuous {G}alerkin method for an interface
  model of flow in fractured porous media.
\newblock {\em Journal of Scientific Computing}, 107(3):83, 2026.

\bibitem{losapio2023local}
Davide Losapio and Anna Scotti.
\newblock Local embedded discrete fracture model ({LEDFM}).
\newblock {\em Advances in Water Resources}, 171:104361, 2023.

\bibitem{macneal1953asymmetrical}
Richard~H Macneal.
\newblock An asymmetrical finite difference network.
\newblock {\em Quarterly of Applied Mathematics}, 11(3):295--310, 1953.

\bibitem{martin2005modeling}
Vincent Martin, J{\'e}r{\^o}me Jaffr{\'e}, and Jean~E Roberts.
\newblock Modeling fractures and barriers as interfaces for flow in porous
  media.
\newblock {\em SIAM Journal on Scientific Computing}, 26(5):1667--1691, 2005.

\bibitem{moinfar2014development}
Ali Moinfar, Abdoljalil Varavei, Kamy Sepehrnoori, and Russell~T Johns.
\newblock Development of an efficient embedded discrete fracture model for {3D}
  compositional reservoir simulation in fractured reservoirs.
\newblock {\em SPE Journal}, 19(02):289--303, 2014.

\bibitem{noorishad1982upstream}
Jahan Noorishad and Mohsen Mehran.
\newblock An upstream finite element method for solution of transient transport
  equation in fractured porous media.
\newblock {\em Water Resources Research}, 18(3):588--596, 1982.

\bibitem{rashid2024continuous}
Harun~U Rashid and Olufemi Olorode.
\newblock A continuous projection-based {EDFM} model for flow in fractured
  reservoirs.
\newblock {\em SPE Journal}, 29(01):476--492, 2024.

\bibitem{schadle20193d}
Philipp Sch{\"a}dle, Patrick Zulian, Daniel Vogler, Sthavishtha~R Bhopalam,
  Maria~GC Nestola, Anozie Ebigbo, Rolf Krause, and Martin~O Saar.
\newblock 3{D} non-conforming mesh model for flow in fractured porous media
  using {L}agrange multipliers.
\newblock {\em Computers \& Geosciences}, 132:42--55, 2019.

\bibitem{schwenck2015dimensionally}
Nicolas Schwenck, Bernd Flemisch, Rainer Helmig, and Barbara~I Wohlmuth.
\newblock Dimensionally reduced flow models in fractured porous media:
  crossings and boundaries.
\newblock {\em Computational Geosciences}, 19(6):1219--1230, 2015.

\bibitem{tene2017projection}
Matei {\c{T}}ene, Sebastian~BM Bosma, Mohammed~Saad Al~Kobaisi, and Hadi
  Hajibeygi.
\newblock Projection-based embedded discrete fracture model ({pEDFM}).
\newblock {\em Advances in Water Resources}, 105:205--216, 2017.

\bibitem{xu2026fdm}
Ziyao Xu.
\newblock A non-conforming finite-difference discrete fracture model based on
  an energy principle.
\newblock Preprint, 2026.

\bibitem{xu2024box}
Ziyao Xu and Dennis Gl{\"a}ser.
\newblock An extension of the box method discrete fracture model ({Box-DFM}) to
  include low-permeable barriers with minimal additional degrees of freedom.
\newblock {\em Advances in Water Resources}, 195:104869, 2025.

\bibitem{xu2023hybrid}
Ziyao Xu, Zhaoqin Huang, and Yang Yang.
\newblock The hybrid-dimensional {D}arcy's law: a non-conforming reinterpreted
  discrete fracture model ({RDFM}) for single-phase flow in fractured media.
\newblock {\em Journal of Computational Physics}, 473:111749, 2023.

\bibitem{xu2020hybrid}
Ziyao Xu and Yang Yang.
\newblock The hybrid dimensional representation of permeability tensor: a
  reinterpretation of the discrete fracture model and its extension on
  nonconforming meshes.
\newblock {\em Journal of Computational Physics}, 415:109523, 2020.

\bibitem{zhang2013accurate}
Na~Zhang, Jun Yao, Zhaoqin Huang, and Yueying Wang.
\newblock Accurate multiscale finite element method for numerical simulation of
  two-phase flow in fractured media using discrete-fracture model.
\newblock {\em Journal of Computational Physics}, 242:420--438, 2013.

\bibitem{zhao2024discrete}
Jijing Zhao and Hongxing Rui.
\newblock A discrete fracture-matrix approach based on {P}etrov-{G}alerkin
  immersed finite element for fractured porous media flow on nonconforming
  mesh.
\newblock {\em Journal of Computational Physics}, 499:112718, 2024.

\bibitem{zhao2026petrov}
Jijing Zhao and Shuyu Sun.
\newblock Petrov-{G}alerkin immersed finite element method for
  {Darcy}/{Darcy}-{F}orchheimer flow in fractured porous media.
\newblock In {\em International Conference on Computational Science}, pages
  123--137. Springer, 2026.

\bibitem{zhao2018modeling}
Jinzhou Zhao, Youshi Jiang, Yongming Li, Xu~Zhou, and Ruisi Wang.
\newblock Modeling fractures and barriers as interfaces for porous flow with
  extended finite-element method.
\newblock {\em Journal of Hydrologic Engineering}, 23(7):04018024, 2018.

\end{thebibliography}

\end{document}